 \newtheorem{thm}{Theorem}[section]
 \newtheorem{prop}[thm]{Proposition}
\theoremstyle{definition}
 \theoremstyle{remark}
\DeclareMathOperator*{\ef}{\operatorname{ess\,inf}}
\DeclareMathOperator*{\essinf}{\operatorname{ess\,inf}}
\numberwithin{equation}{section}
\title[Harmonic analysis operators in the rational Dunkl settings]{Harmonic analysis operators in the rational Dunkl settings}
\author{V. Almeida, J.J. Betancor, J.C. Fari\~na and L. Rodr\'{\i}guez-Mesa}
\address{V\'{\i}ctor Almeida, Jorge J. Betancor, Juan C. Fari\~na and Lourdes Rodr\'{\i}guez-Mesa\newline
	Departamento de An\'alisis Matem\'atico, Universidad de La Laguna,\newline
	Campus de Anchieta, Avda. Astrof\'isico S\'anchez, s/n,\newline
	38721 La Laguna (Sta. Cruz de Tenerife), Spain}
\email{valmeida@ull.edu.es, jbetanco@ull.es, jcfarina@ull.edu.es,
lrguez@ull.edu.es
}
\thanks{The authors are partially supported by grant PID2019-106093GB-I00 from the Spanish Government}
\subjclass[2020]{42B25, 42B30}
\keywords{Dunkl operator, maximal operator, Littlewood-Paley function, variation and oscillation operators}
\begin{document}
\maketitle

\begin{abstract}
In this paper we study harmonic analysis operators in Dunkl settings associated with finite reflection groups on Euclidean spaces. We consider maximal operators, Littlewood-Paley functions, $\rho$-variation and oscillation operators involving time derivatives of the heat semigroup generated by Dunkl operators. We establish the boundedness properties of these operators in $L^p(\mathbb R^d,\omega_\mathfrak{K})$, $1\leq p<\infty$, Hardy spaces, BMO and BLO-type spaces in the Dunkl settings. The study of harmonic analysis operators associated to reflection groups need  different strategies from the ones used in the Euclidean case since the integral kernels of the operators admit estimations involving two different metrics, namely, the Euclidean and the orbit metrics. For instance, the classical Calder\'on-Zygmund theory for singular integrals does not work in this setting.
\end{abstract}

\section{Introduction}\label{S1}
In this paper we study harmonic analysis operators in the Dunkl setting which is associated with finite reflection groups in Euclidean spaces. The Dunkl theory can be seen as an extension of the Euclidean Fourier analysis. After the Dunkl's paper ($\!\!$\cite{Dun1}) appeared the theory has been developed extensively ($\!\!$\cite{DaXu}, \cite{Dun2}, \cite{Dun3}, \cite{Dun4}, \cite{Ro1,Ro2,Ro4,Ro3,JeuRo,RoVo}, \cite{ThXu1} and \cite{ThXu2}). Recently, harmonic analysis associated with Dunkl operators has gained a lot of interest ($\!\!$\cite{AS, ABDH, ADH}, \cite{DH2,DH1,DH4,DH3,DH5,DH6,DH7,DH8}, \cite{HLLW}, \cite{H1},  \cite{JiLi1,JiLi3,JiLi2},  \cite{LZ} and \cite{THHLL}).

We now collect some basic definitions and properties concerning to Dunkl theory that will be useful in the sequel. The interested reader can go to \cite{Dun1} and \cite{Ro4} for details.

We consider the Euclidean space $\mathbb R^d$ endowed with the usual inner product $\langle\cdot ,\cdot\rangle$. If $\alpha$ is a nonzero vector in $\mathbb R^d$ the reflection $\sigma_\alpha$ with respect to the hyperplane orthogonal to $\alpha$ is defined by

$$\sigma_\alpha(x)=x-2\frac{\langle x,\alpha\rangle}{|\alpha|^2}\alpha,\quad x\in\mathbb R^d.$$
Here $|\alpha|$ denotes as usual the Euclidean norm of $\alpha$. Let $R$ be a finite set of $\mathbb R^d\setminus\{0\}$, we say that $R$ is a root system if $\sigma_\alpha(R)=R$ and $R\cap(\mathbb R\alpha)=\{\pm\alpha\}$, $\alpha\in R$. 
The reflection $\{\sigma_\alpha\}_{\alpha\in R}$ generates a finite group G called Weyl group of the root system $R$. We always consider that the system of root is normalised being $|\alpha|=\sqrt{2}$, $\alpha\in R$. The root system $R$ can be rewritten as a disjoint union $R=R_+\cup(-R_+)$, where $R_+$ and $-R_+$ are separated by a hyperplane through the origin. $R_+$ is named a positive subsystem of $R$. This decomposition of $R$ is not unique.

If $A$ is a subset of $\mathbb R^d$, by $\theta(A)$ we denote the $G$-orbit of $A$. We define $\rho(x,y)=\min_{\sigma\in G}|x-\sigma(y)|$, $x,y\in\mathbb R^d$, that represents the distance between the orbits $\theta(\{x\})$ and $\theta(\{y\})$. For every $x\in\mathbb R^d$ and $r>0$ we denote by $B(x,r)$ and $B_\rho(x,r)$ the Euclidean and the $\rho$-balls centered in $x$ and with radius $r$. We have that $B_\rho(x,r)=\theta(B(x,r))$, $x\in\mathbb R^d$ and $r>0$.

A multiplicity function is a $G$-invariant function $\mathfrak{K}:R\rightarrow\mathbb C$. Throughout this paper we always consider a multiplicity function $\mathfrak{K}\geq 0$. We define $\gamma =\sum_{\alpha\in R_+}\mathfrak{K}(\alpha)$ and $D=d+2\gamma$. The measure $\omega_\mathfrak{K}$ defined by $d\omega_\mathfrak{K}(x)=\omega_\mathfrak{K}(x)dx$, on $\mathbb R^d$, where
$$
\omega_\mathfrak{K}(x)=\prod_{\alpha\in R_+}|\langle\alpha,x\rangle|^{2\mathfrak{K}(\alpha)},\quad x\in\mathbb R^d,$$
is $G$-invariant. The number $D$ is called the homogeneous dimension because the following property holds
$$
\omega_\mathfrak{K}(B(tx,tr))=t^D\omega_\mathfrak{K}(B(x,r)),\quad x\in\mathbb R^d\;\mbox{and}\;r>0.
$$
We have that 
$$
\omega_\mathfrak{K}(B(x,r))\sim r^d\prod_{\alpha\in R}(|\langle\alpha,x\rangle|+r)^{\mathfrak{K}(\alpha)},\quad x\in\mathbb R^d\;\mbox{and}\;r>0,$$
where the equivalence constants do not depend on $x\in\mathbb R^d$ or $r>0$. Then, we can see that $\omega_\mathfrak{K}$ has the following doubling property: there exists $C>0$ such that
$$\omega_\mathfrak{K}(B(x,2r))\leq C\omega_\mathfrak{K}(B(x,r)),\quad x\in\mathbb R^d\;\mbox{and}\;r>0.$$
Hence the triple $(\mathbb R^d,|\cdot|,\omega_\mathfrak{K})$ is a space of homogeneous type in the sense of Coifman and Weiss ($\!\!$\cite{CoWe}).

A very useful property ($\!\!$\cite[(3.2)]{ADH}) is the following one: there exists $C\geq 1$ such that
\begin{align}\label{1.1}
  \frac{1}{C}\left(\frac{s}{r}\right)^d \leq \frac{\omega_\mathfrak{K}(B(x,s))}{\omega_\mathfrak{K}(B(x,r))}\leq C \left(\frac{s}{r}\right)^D,\quad x\in\mathbb R^d\;\mbox{and}\;0<r\leq s.
\end{align}
Also, we have that
\begin{align}\label{1.2}
  \omega_\mathfrak{K}(B(x,r))\leq \omega_\mathfrak{K}(\theta (B(x,r)))\leq \#(G) \omega_\mathfrak{K}(B(x,r)),\quad x\in\mathbb R^d\;\mbox{and}\;r>0.
\end{align}
Here $\#(G)$ denotes the number of elements of $G$.

Some examples of these objects can be found in \cite[p. 96]{Ro4}.

We now give the definitions of Dunkl operators associated with a system of roots $R$ and a multiplicity function $\mathfrak K$. Let $\xi\in\mathbb R^d\setminus\{0\}$. We denote by $\partial_\xi$ the classical derivative in the direction $\xi$. The Dunkl operator $D_\xi$ is defined by

$$D_\xi f(x)=\partial_\xi f(x)+\sum_{\alpha\in R}\frac{\mathfrak{K}(\alpha)}{2}\langle\alpha,\xi\rangle\frac{f(x)-f(\sigma_\alpha(x))}{\langle\alpha,x\rangle},\quad x\in\mathbb R^d.$$
$D_\xi$ can be seen as a deformation of $\partial_\xi$ by a difference operator. These operators were introduced in \cite{Dun1} where their main properties can be found. The Dunkl Laplacian associated with $R$ and $\mathfrak{K}$ is defined by $\Delta_\mathfrak K=\sum_{j=1}^dD_{e_j}^2$, where $\{e_j\}_{j=1}^d$ represents the canonical basis in $\mathbb R^d$. It is clear that $\Delta_\mathfrak{K}$ reduces to the Euclidean Laplacian when $\mathfrak K=0$. We can write, for every $f\in C^2(\mathbb R^d)$,
$$
\Delta_\mathfrak K f=\Delta f+\sum_{\alpha\in R}\mathfrak{K}(\alpha)\delta_\alpha f,
$$
where $\Delta$ denotes the Euclidean Laplacian and, for every $\alpha\in R$,
$$
\delta_\alpha f(x)=\frac{\partial_\alpha f(x)}{\langle\alpha,x\rangle}-\frac{f(x)-f(\sigma_\alpha(x))}{\langle\alpha,x\rangle^2},\quad x\in\mathbb R^d.
$$
According to \cite[Theorem 3.1]{AmHa} the Dunkl Laplacian is essentially selfadjoint in $L^2(\mathbb R^d,\omega_\mathfrak{K})$ and $\Delta_\mathfrak K$ generates a $C_0$-semigroup of operators $\{T_t^\mathfrak K\}_{t>0}$ in $L^2(\mathbb R^d,\omega_\mathfrak{K})$ where, for every $t>0$ and $f\in L^2(\mathbb R^d,\omega_\mathfrak{K})$,
\begin{align}\label{1.3}
  T_t^\mathfrak K(f)(x)=\int_{\mathbb R^d}T_t^\mathfrak K (x,y)f(y)d\omega_\mathfrak{K}(y),
\end{align}
being the function $(t,x,y)\rightarrow T_t^\mathfrak K (x,y)$ a $C^\infty$-function on $(0,\infty)\times\mathbb R^d\times\mathbb R^d$ satisfying that $T_t^\mathfrak K (x,y)=T_t^\mathfrak K (y,x)$, $t>0$ and $x,y\in\mathbb R^d$, and

\begin{align}\label{1.4}
 \int_{\mathbb R^d}T_t^\mathfrak K (x,y)d\omega_\mathfrak{K}(y)=1,\quad x\in\mathbb R^d\;\mbox{and}\;t>0.
\end{align}
The integral operator given by \eqref{1.3} defines a $C_0$-semigroup of linear contractions in $L^p(\mathbb R^d,\omega_\mathfrak{K})$, $1\leq p<\infty$. Thus $\{T_t^\mathfrak K\}_{t>0}$ defines a symetric diffusion semigroup in the sense of Stein ($\!\!$\cite{St}).

Gaussian type upper bounds were established in \cite[Theorem 4.1]{ADH} for the heat kernel and its derivatives. We now recall those estimates that we will use throughout this paper.
\begin{itemize}
\item[(a)] For every $m\in\mathbb N$ there exist $C,c>0$ such that
\begin{align}\label{1.5}
 |t^m\partial _t^mT_t^\mathfrak K (x,y)|\leq \frac{C}{V(x,y,\sqrt{t})}e^{-c\frac{\rho(x,y)^2}{t}},\quad x,y\in\mathbb R^d\;\mbox{and}\;t>0.
\end{align}
Here $V(x,y,r)=\max\{\omega_\mathfrak{K}(B(x,r)),\omega_\mathfrak{K}(B(y,r))\}$, $x,y\in\mathbb R^d$ and $r>0$.
\item[(b)] For every $m\in\mathbb N$ there exist $C,c>0$ such that
\begin{align}\label{1.6}
 |t^m\partial _t^mT_t^\mathfrak K (x,y)-t^m\partial _t^mT_t^\mathfrak K (x,z)|\leq C\frac{|y-z|}{\sqrt{t}}\frac{e^{-c\frac{\rho(x,y)^2}{t}}}{V(x,y,\sqrt{t})},
\end{align}
for every $t>0$, $x,y,z\in\mathbb R^d$ and $|y-z|<\sqrt{t}$.
\end{itemize}

Note that in the exponential term in (\ref{1.5}) and (\ref{1.6}) the Euclidean metric is replaced by the orbit distance $\rho$. Moreover in the estimate (\ref{1.6}) appear the two metrics. Note that these two metrics are not equivalent.

The fact that estimates involve the two metrics can not be avoid when we are working in the Dunkl setting as it can be seen in the Dunkl-Calder\'on Zygmund singular integrals (see \cite{HLLW}). These properties differ in Dunkl and Euclidean settings, so we need more careful arguments when we are dealing in Dunkl context.

Our objective in this paper is to study the behaviour of some Dunkl harmonic analysis operators involving the heat semigroup $\{T_t^\mathfrak K\}_{t>0}$ and its time derivatives in $L^p$, Hardy and ${\rm BMO}$ spaces.

In addition to Lebesgue spaces $L^p(\mathbb R^d, \omega_\mathfrak{K})$, $1\leq p<\infty$, and the weak $L^1$-space denoted by $L^{1,\infty}(\mathbb R^d, \omega_\mathfrak{K})$, we also think about the Hardy space $H^1(\Delta_\mathfrak{K})$ and the ${\rm BMO}$ type spaces ${\rm BMO}(\mathbb R^d, \omega_\mathfrak{K})$, ${\rm BMO}^\rho (\mathbb R^d,\omega_\mathfrak K)$ and ${\rm BLO}(\mathbb R^d, \omega_\mathfrak{K})$ which we are going to precise.

We consider Hardy spaces associated with Dunkl operators introduced in \cite{ADH} and \cite{DH1}.

Let $1<q\leq\infty$ and let $M$ be a positive integer. We say that a function $a$ is a $(1,q,\Delta_\mathfrak K ,M)$-atom when $a\in L^2(\mathbb R^d,\omega_\mathfrak{K})$ and there exist $b\in D(\Delta_\mathfrak K ^M)$ and an Euclidean ball $B=B(x_B,r_B)$ with $x_B\in\mathbb R^d$ and $r_B>0$, satisfying that

\begin{enumerate}
\item[(i)] $a=\Delta_\mathfrak K ^M b$;
\item[(ii)] $\mbox{supp} (\Delta_\mathfrak K ^\ell b)\subset\theta(B)$, $\ell=0,...,M$;
\item[(iii)] $\|(r_B^2\Delta_\mathfrak K )^\ell b\|_{L^q(\mathbb R^d,\omega_\mathfrak{K})}\leq r_B^{2M}\omega_\mathfrak{K}(B)^{1/q-1}$, $\ell=0,...,M$.
\end{enumerate}
A function $f$ is in $H^1_{(1,q,\Delta_\mathfrak K ,M)}$ when $f=\sum_{j\in \mathbb N}\lambda_j a_j$, where, for every $j\in\mathbb N$, $a_j$ is a $(1,q,\Delta_\mathfrak K ,M)$-atom and $\lambda_j\in\mathbb C$ such that $\sum_{j\in \mathbb N} |\lambda_j|<\infty$. Here the series defining $f$ converges in $L^1(\mathbb R^d,\omega_\mathfrak{K})$. We define, for every $f\in H^1_{(1,q,\Delta_\mathfrak K ,M)}$

$$\|f\|_{H^1_{(1,q,\Delta_\mathfrak K ,M)}}=\inf \sum_{j\in \mathbb N} |\lambda_j|,$$
where the infimum is taken over all the sequences $\{\lambda_j\}_{j\in \mathbb N}\subset\mathbb C$ such that $\sum_{j\in \mathbb N}|\lambda_j|<\infty$ and $f=\sum_{j\in \mathbb N} \lambda_j a_j$, where, for every $j\in\mathbb N$, $a_j$ is a $(1,q,\Delta_\mathfrak K,M)$-atom.

Let $1<q\leq\infty$. A function $a$ is said to be a $(1,q)$-atom if there exists an Euclidean ball $B$ such that

\begin{enumerate}
\item[(i)] $\mbox{supp}\;a\subset B$;
\item[(ii)] $\int ad\omega_\mathfrak{K}=0$;
\item[(iii)] $\|a\|_{L^q(\mathbb R^d,\omega_\mathfrak{K})}\leq \omega_\mathfrak{K}(B)^{1/q-1}$.
\end{enumerate}
The Hardy space $H^1_{(1,q)}$ is defined as above replacing $(1,q,\Delta_\mathfrak K ,M)$-atoms by $(1,q)$-atoms.
In \cite[Theorem 1.5]{DH1} it was proved that $H^1_{(1,q)}=H^1_{(1,q,\Delta_\mathfrak K ,M)}$ algebraically and topologically. The space $H^1_{(1,q)}$ is characterized by using maximal functions ($\!\!$\cite[Theorem 2.2]{ADH}), square functions ($\!\!$\cite[Theorem 2.3]{ADH}) and Riesz transforms ($\!\!$\cite[Theorem 2.5]{ADH}), what makes clear that $H^1_{(1,q)}$ does not depend on $q$. From now on we denote by $H^1(\Delta_\mathfrak K)$ any of these Hardy spaces.

According to the results in \cite{CoWe1} about Hardy spaces in homogeneous type spaces the dual of $H^1(\Delta _\mathfrak K)$ can be characterized as the space of bounded mean oscillation functions ${\rm BMO}(\mathbb R^d,\omega_\mathfrak{K})$ defined as follows. A function $f\in L^1_{\rm loc}(\mathbb R^d,\omega_\mathfrak{K})$ is in ${\rm BMO}(\mathbb R^d,\omega_\mathfrak{K})$ provided that
$$
\|f\|_{{\rm BMO}(\mathbb R^d,\omega_\mathfrak{K})}:=\sup_{B}\frac{1}{\omega_\mathfrak{K}(B)}\int_B |f(y)-f_B|d\omega_\mathfrak{K}(y)<\infty.
$$
Here the supremum is taken over all the Euclidean balls $B$ in $\mathbb R^d$. For every Euclidean ball $B$ we define $f_B=\frac{1}{\omega_\mathfrak{K}(B)}\int_B fd\omega_\mathfrak{K}$. As it is well-known $({\rm BMO}(\mathbb R^d,\omega_\mathfrak{K}), \|\cdot\|_{{\rm BMO}(\mathbb R^d,\omega_\mathfrak{K})})$ is a Banach space when functions differing in a constant are identified. In \cite[Theorem 6.7]{JiLi3} it was proved that the dual of $H^1(\Delta _\mathfrak K)$ can be also realized by a class of functions defined by using Carleson measures in the Dunkl setting.

Other ${\rm BMO}$-type space in the Dunkl setting can be considered by replacing the Euclidean balls by $\rho$-metric balls. The space ${\rm BMO}^\rho(\mathbb R^d,\omega_\mathfrak{K})$ consists of all those $f\in L^1_{\rm loc}(\mathbb R^d,\omega_\mathfrak{K})$ such that 
$$
\|f\|_{{\rm BMO}^\rho(\mathbb R^d,\omega_\mathfrak{K})}:=\sup_{B}\frac{1}{\omega_\mathfrak{K}(\theta(B))}\int_{\theta(B)} |f(y)-f_{\theta(B)}|d\omega_\mathfrak{K}(y)<\infty,
$$
where the supremum is taken over all the Euclidean balls $B$ in $\mathbb R^d$. We have that ${\rm BMO}^\rho(\mathbb R^d,\omega_\mathfrak{K})$ is contained in ${\rm BMO}(\mathbb R^d,\omega_\mathfrak{K})$ and $f\in {\rm BMO}^\rho(\mathbb R^d,\omega_\mathfrak{K})$ provided that $f$ is $G$-invariant and $f\in {\rm BMO}(\mathbb R^d,\omega_\mathfrak{K})$ ($\!\!$\cite[Proposition 7.4]{JiLi3}). As it is proved in \cite[Section 7.2]{JiLi3}  ${\rm BMO}^\rho(\mathbb R^d,\omega_\mathfrak{K})$ does not coincide with  ${\rm BMO}(\mathbb R^d,\omega_\mathfrak{K})$. Furthermore,  Han, Lee, Li and Wick ($\!\!$\cite{HLLW}) analyzed the $L^p$-boundedness of the commutator of the Dunkl-Riesz transform with functions $b$ on these ${\rm BMO}$-type spaces in the Dunkl setting. They established that the commutator is bounded on $L^p(\mathbb R^d,\omega _\mathfrak K)$, $1<p<\infty$, when the function $b$ belongs to ${\rm BMO}^\rho (\mathbb R^d,\omega_\mathfrak{K})$. Conversely, if the commutator of the Dunkl-Riesz transform with $b$ is bounded on $L^p(\mathbb R^d,\omega _\mathfrak K)$ for some $1<p<\infty$, then $b\in {\rm BMO}(\mathbb R^d,\omega_\mathfrak K)$ ($\!\!$\cite[Theorem 1.3]{HLLW}).

Coifman and Rochberg ($\!\!$\cite{CR}) introduced the space ${\rm BLO}$ of functions of bounded lower oscillation in the Euclidean setting (see also \cite{Ben}). ${\rm BLO}$ space is defined analogously of ${\rm BMO}$ but replacing the average $f_B$ by the essential infimum of $f$ in $B$. ${\rm BLO}$-type spaces appear as the image of $L^\infty$ and ${\rm BMO}$-spaces for maximal operators, Littlewood-Paley functions and singular integrals ($\!\!$\cite{Ji}, \cite {MY} and \cite{YYZ}).

We now introduce a ${\rm BLO}$-type space in the Dunkl setting. We say that a function $f\in L^1_{\rm loc}(\mathbb R^d,\omega_\mathfrak{K})$ is in ${\rm BLO}(\mathbb R^d,\omega_\mathfrak{K})$ when
$$
\|f\|_{{\rm BLO}(\mathbb R^d,\omega_\mathfrak{K})}:=\sup_{B}\frac{1}{\omega_\mathfrak{K}(B)}\int_{B} (f(y)-\essinf_{z\in B} f(z))d\omega_\mathfrak{K}(y)<\infty,
$$
where the supremum is taken over all the Euclidean balls $B$ in $\mathbb R^d$.

We now establish the main results of this paper.

Let $m\in\mathbb N$. Denote by $T_{t,m}^\mathfrak K$ the operator $T_{t,m}^\mathfrak K=t^m\partial _t^mT_t^\mathfrak K$, $t>0$. We consider the maximal operator $T_{*,m}^\mathfrak K$ defined by
$$
T_{*,m}^\mathfrak K(f)=\sup_{t>0}|T_{t,m}^\mathfrak K(f)|.
$$
Since $\{T_t^\mathfrak K\}_{t>0}$ is a diffusion semigroup in $L^p(\mathbb R^d,\omega_\mathfrak{K})$, $1\leq p<\infty$, according to \cite[Corollary 4.2]{LeMX2}, the operator $T_{*,m}^\mathfrak K$ is bounded from $L^p(\mathbb R^d,\omega_\mathfrak{K})$ into itself, for every $1<p<\infty$.

\begin{thm}\label{Th1.1}
Let $m\in\mathbb N$. The maximal operator $T_{*,m}^\mathfrak K$ is bounded from $L^1(\mathbb R^d,\omega_\mathfrak{K})$ into $L^{1,\infty}(\mathbb R^d,\omega_\mathfrak{K})$ and from $H^1(\Delta _\mathfrak K)$ into $L^1(\mathbb R^d,\omega_\mathfrak{K})$. Furthermore, if $f\in {\rm BMO}^\rho(\mathbb R^d,\omega_\mathfrak{K})$ and $T_{*,m}^\mathfrak K(f)(x)<\infty$, for almost all $x\in\mathbb R^d$, then $T_{*,m}^\mathfrak K(f)\in {\rm BLO}(\mathbb R^d,\omega_\mathfrak{K})$ and $\|T_{*,m}^\mathfrak K(f)\|_{{\rm BLO}(\mathbb R^d,\omega_\mathfrak{K})}\leq C\|f\|_{{\rm BMO}^\rho(\mathbb R^d,\omega_\mathfrak{K})}$ where $C>0$ does not depend on $f$.
\end{thm}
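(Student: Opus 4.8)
The plan is to prove the three assertions separately, using in each case that the bounds \eqref{1.5}--\eqref{1.6} already include the time derivatives $t^m\partial_t^m$, that the Gaussian factor is governed by the orbit distance $\rho$ while $\omega_\mathfrak{K}$ is $G$-invariant and doubling, and that $e^{-c\rho(x,y)^2/t}\le\sum_{\sigma\in G}e^{-c|x-\sigma(y)|^2/t}$. For the $L^1\to L^{1,\infty}$ part I would first combine \eqref{1.5} with $V(x,y,\sqrt t)\ge\omega_\mathfrak{K}(B(x,\sqrt t))$ and this last inequality, and then, after the substitution $u=\sigma(y)$ (which, by $G$-invariance of $\omega_\mathfrak{K}$, preserves $d\omega_\mathfrak{K}$), obtain the pointwise domination $T_{*,m}^\mathfrak K(f)(x)\le C\sum_{\sigma\in G}M_{\rm HL}(|f\circ\sigma^{-1}|)(x)$, where $M_{\rm HL}$ is the Hardy--Littlewood maximal operator on the space of homogeneous type $(\mathbb R^d,|\cdot|,\omega_\mathfrak{K})$ (the last step being the usual annular splitting with the doubling property). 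Since $M_{\rm HL}$ is of weak type $(1,1)$ and $\|f\circ\sigma^{-1}\|_{L^1(\omega_\mathfrak{K})}=\|f\|_{L^1(\omega_\mathfrak{K})}$, this gives the weak $(1,1)$ bound (and re-gives the $L^p$-boundedness for $1<p<\infty$).

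For $H^1(\Delta_\mathfrak K)\to L^1(\omega_\mathfrak{K})$ I would use the atomic characterisation of $H^1(\Delta_\mathfrak K)$ and reduce, by a routine sublinearity and limiting argument, to proving $\|T_{*,m}^\mathfrak K(a)\|_{L^1(\omega_\mathfrak{K})}\le C$ uniformly over $(1,q)$-atoms $a$ supported in a Euclidean ball $B=B(x_B,r_B)$. Split $\mathbb R^d=\theta(2B)\cup(\theta(2B))^c$. On $\theta(2B)$, H\"older's inequality, the $L^q$-boundedness of $T_{*,m}^\mathfrak K$, the normalisation $\|a\|_{L^q(\omega_\mathfrak{K})}\le\omega_\mathfrak{K}(B)^{1/q-1}$ and $\omega_\mathfrak{K}(\theta(2B))\le C\omega_\mathfrak{K}(B)$ (from \eqref{1.2} and doubling) give a bound by a constant. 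On $(\theta(2B))^c$ I would use $\int a\,d\omega_\mathfrak{K}=0$ to write $T_{t,m}^\mathfrak K(a)(x)=\int_B(T_{t,m}^\mathfrak K(x,y)-T_{t,m}^\mathfrak K(x,x_B))a(y)\,d\omega_\mathfrak{K}(y)$ and reduce everything to the H\"ormander-type estimate
\[
\sup_{y\in B}\int_{\rho(x,x_B)\ge 2r_B}\sup_{t>0}\bigl|T_{t,m}^\mathfrak K(x,y)-T_{t,m}^\mathfrak K(x,x_B)\bigr|\,d\omega_\mathfrak{K}(x)\le C .
\]
This I would prove by splitting the supremum over $t$ into $\sqrt t>r_B$, where \eqref{1.6} applies (note $|y-x_B|<r_B<\sqrt t$), and $\sqrt t\le r_B$, where \eqref{1.5} is used on each term; in both ranges the triangle inequality for $\rho$ gives $\rho(x,y)\ge\tfrac12\rho(x,x_B)$, one replaces $\omega_\mathfrak{K}(B(x,\rho(x,x_B)))$ by the comparable $\omega_\mathfrak{K}(B(x_B,\rho(x,x_B)))$ (because $\rho(x,x_B)=|x-\sigma_0(x_B)|$ for some $\sigma_0\in G$), and one sums over the dyadic $\rho$-annuli $\{2^kr_B\le\rho(x,x_B)<2^{k+1}r_B\}$ using \eqref{1.1}, \eqref{1.2} and the Gaussian factor. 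I expect this to be the first genuine obstacle: the smoothness estimate \eqref{1.6} is expressed in the Euclidean metric (the factor $|y-x_B|/\sqrt t$ and the restriction $|y-x_B|<\sqrt t$), whereas the decay and the natural decomposition live in the $\rho$-metric, so the two geometries must be reconciled with care.

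For the ${\rm BMO}^\rho\to{\rm BLO}$ part, fix a Euclidean ball $B=B(x_B,r_B)$ and write $f=f_{\theta(2B)}+\tilde f_1+\tilde f_2$ with $\tilde f_1=(f-f_{\theta(2B)})\chi_{\theta(2B)}$ and $\tilde f_2=(f-f_{\theta(2B)})\chi_{(\theta(2B))^c}$. By \eqref{1.4}, $\int T_{t,m}^\mathfrak K(x,y)\,d\omega_\mathfrak{K}(y)$ equals $1$ when $m=0$ and $0$ when $m\ge1$, so the constant $f_{\theta(2B)}$ either vanishes or cancels in differences, and for $x,z\in B$ and $t>0$,
\[
T_{t,m}^\mathfrak K(f)(x)-T_{t,m}^\mathfrak K(f)(z)=\bigl[T_{t,m}^\mathfrak K(\tilde f_1)(x)-T_{t,m}^\mathfrak K(\tilde f_1)(z)\bigr]+\bigl[T_{t,m}^\mathfrak K(\tilde f_2)(x)-T_{t,m}^\mathfrak K(\tilde f_2)(z)\bigr].
\]
Setting $g=T_{*,m}^\mathfrak K(f)$, $g_1=T_{*,m}^\mathfrak K(\tilde f_1)$ and $\mathcal D(x,z)=\sup_{t>0}|T_{t,m}^\mathfrak K(\tilde f_2)(x)-T_{t,m}^\mathfrak K(\tilde f_2)(z)|$, choosing for each $x$ a time nearly realising $g(x)$ yields $g(x)-g(z)\le g_1(x)+g_1(z)+\mathcal D(x,z)$ for a.e.\ $x,z\in B$. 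Two estimates then remain. First, $\frac1{\omega_\mathfrak{K}(B)}\int_B g_1\,d\omega_\mathfrak{K}\le C\|f\|_{{\rm BMO}^\rho}$: this follows from the $L^2$-boundedness of $T_{*,m}^\mathfrak K$ once one knows $\|\tilde f_1\|_{L^2(\omega_\mathfrak{K})}^2\le C\omega_\mathfrak{K}(B)\|f\|_{{\rm BMO}^\rho}^2$, which in turn comes from the ordinary John--Nirenberg inequality on the Euclidean balls $B(\sigma(x_B),2r_B)$, $\sigma\in G$, covering $\theta(2B)$, a comparison of the corresponding averages, and the inclusion ${\rm BMO}^\rho\subset{\rm BMO}$. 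Second, $\mathcal D(x,z)\le C\|f\|_{{\rm BMO}^\rho}$ for all $x,z\in B$: one splits the supremum in $t$ into $\sqrt t\ge 2r_B$ (using \eqref{1.6} in the first variable via $T_t^\mathfrak K(x,y)=T_t^\mathfrak K(y,x)$, since $|x-z|<2r_B\le\sqrt t$) and $\sqrt t<2r_B$ (using \eqref{1.5} on each term), uses $\rho(x,y)\ge\tfrac12\rho(x_B,y)$, and sums over the dyadic $\rho$-annuli around $x_B$, where $\int_{\{2^kr_B\le\rho(y,x_B)<2^{k+1}r_B\}}|f-f_{\theta(2B)}|\,d\omega_\mathfrak{K}\le Ck\,\omega_\mathfrak{K}(\theta(2^{k+1}B))\|f\|_{{\rm BMO}^\rho}$ (telescoping the averages over the $\rho$-balls $\theta(2^jB)$ and using that $\omega_\mathfrak{K}$ is doubling on $\rho$-balls by \eqref{1.2}); \eqref{1.1} and the Gaussian factor make the resulting series converge.

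With these ingredients, integrating the pointwise inequality over $x\in B$ and taking the essential infimum in $z$ gives $\frac1{\omega_\mathfrak{K}(B)}\int_B\bigl(g(x)-\essinf_{z\in B}g(z)\bigr)\,d\omega_\mathfrak{K}(x)\le C\|f\|_{{\rm BMO}^\rho}$ by a standard argument, completing the proof. For $m\ge1$ there is a simpler route to the last step: using $\int T_{t,m}^\mathfrak K(x,y)\,d\omega_\mathfrak{K}(y)=0$ one may write $T_{t,m}^\mathfrak K(f)(x)=\int T_{t,m}^\mathfrak K(x,y)\bigl(f(y)-f_{B_\rho(x,\sqrt t)}\bigr)\,d\omega_\mathfrak{K}(y)$, with $B_\rho(x,\sqrt t)=\theta(B(x,\sqrt t))$, and run the $\rho$-annular estimate directly against \eqref{1.5}, obtaining the stronger bound $\|T_{*,m}^\mathfrak K(f)\|_{L^\infty(\omega_\mathfrak{K})}\le C\|f\|_{{\rm BMO}^\rho}$, which already yields the ${\rm BLO}$ estimate. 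The main difficulty in this part is again the coexistence of the two metrics in \eqref{1.6}; a secondary point is that the hypothesis $T_{*,m}^\mathfrak K(f)<\infty$ a.e.\ is genuinely needed when $m=0$ (for $m\ge1$ it is automatic from the above), both to give meaning to the essential infimum and to control the constants in the combination step.
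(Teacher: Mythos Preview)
Your approach to the $L^1\to L^{1,\infty}$ bound coincides with the paper's: both dominate $T_{*,m}^\mathfrak K$ pointwise by a sum over $G$ of translates of the Hardy--Littlewood maximal function on $(\mathbb R^d,|\cdot|,\omega_\mathfrak K)$.

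For $H^1\to L^1$ the paper takes a shortcut you do not: it writes $T_{*,m}^\mathfrak K(f)\le V_\sigma(\{T_{t,m}^\mathfrak K\}_{t>0})(f)+|T_{1,m}^\mathfrak K(f)|$, invokes Theorem~\ref{Th1.3} (already proved) for the variation term, and then only checks that the single operator $\partial_t^mT_t^\mathfrak K|_{t=1}$ maps $H^1$ to $L^1$, via $(1,2,\Delta_\mathfrak K,1)$-atoms and the identity $a=\Delta_\mathfrak K b$ (which buys an extra derivative and hence extra decay through \cite[Lemmas 2.3 and 2.6]{Li}). Your direct attack with $(1,q)$-atoms and the H\"ormander-type estimate built from \eqref{1.5}--\eqref{1.6} is a correct, self-contained alternative; the two-metric reconciliation you flag works exactly as you describe.

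For the ${\rm BMO}^\rho\to{\rm BLO}$ part there is a real gap when $m=0$. Your pointwise inequality $g(x)-g(z)\le g_1(x)+g_1(z)+\mathcal D(x,z)$ is correct, but ``integrating over $x$ and taking the essential infimum in $z$'' does \emph{not} give the BLO bound: the term $g_1(z)=T_{*,0}^\mathfrak K(\tilde f_1)(z)$ is only controlled in $L^2(B)$, not pointwise, so forcing $g(z)$ close to $\essinf_B g$ leaves $g_1(z)$ uncontrolled (and conversely). The paper avoids this by splitting the supremum in $t$ at $t=4r_B^2$ and partitioning $B=B_0\cup B_\infty$ according to which range realises the maximal value. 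On $B_0$ the comparison is made with $|T_{4r_B^2,m}^\mathfrak K(f)(z)|$ at the \emph{fixed} time $4r_B^2$, where $|T_{4r_B^2,m}^\mathfrak K(\tilde f_1)(z)|\le C\|f\|_{{\rm BMO}^\rho}$ holds \emph{pointwise} for $z\in B$ (at that scale the kernel spreads $\tilde f_1$ uniformly over $\theta(4B)$); on $B_\infty$ one compares $T_{*,m}^{\mathfrak K,\infty}(f)(x)$ with $T_{*,m}^{\mathfrak K,\infty}(f)(z)$ via \eqref{1.6}, exactly as in your $\mathcal D(x,z)$ estimate. Your alternative route for $m\ge1$ (the uniform bound $\|T_{*,m}^\mathfrak K(f)\|_{L^\infty}\le C\|f\|_{{\rm BMO}^\rho}$ obtained from $\int T_{t,m}^\mathfrak K(x,\cdot)\,d\omega_\mathfrak K=0$ and the annular decomposition) is correct and indeed simpler than the paper's; the obstruction is genuinely confined to $m=0$.
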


Suppose now that $m\in\mathbb N$, $m\geq 1$. We define the $m$-order Littlewood-Paley $g_m$ function by 
$$
g_m(f)(x)=\left(\int_0^\infty |T_{t,m}^\mathfrak K (f)(x)|^2\frac{dt}{t}\right)^{1/2},\quad x\in\mathbb R^d.
$$
Since $\{T_t^\mathfrak K \}_{t>0}$ is a symmetric diffusion semigroup from \cite[Corollary 1, p. 120]{St} it follows that $g_m$ is bounded from $L^p(\mathbb R^d,\omega_\mathfrak{K})$ into itself, for every $1<p<\infty$. In \cite[Theorem 1.2]{Li} it was proved that $g_1$ is bounded from $L^1(\mathbb R^d,\omega_\mathfrak{K})$ into $L^{1,\infty}(\mathbb R^d,\omega_\mathfrak{K})$. Dziuba\'nski and Hejna ($\!\!$\cite{DH5}) proved $L^p$ boundedness properties, $1<p<\infty$, for Littlewood-Paley functions defined by replacing the heat semigroup $\{T_t^\mathfrak K \}_{t>0}$ by families of Dunkl convolutions.

\begin{thm}\label{Th1.2}
Let $m\in\mathbb N$, $m\geq 1$. The Littlewood-Paley $g_m$ function is bounded from $L^1(\mathbb R^d,\omega_\mathfrak{K})$ into $L^{1,\infty}(\mathbb R^d,\omega_\mathfrak{K})$ and from $H^1(\Delta _\mathfrak K)$ into $L^1(\mathbb R^d,\omega_\mathfrak{K})$. Furthermore, if $f\in {\rm BMO}^\rho(\mathbb R^d,\omega_\mathfrak{K})$ and $g_m(f)(x)<\infty$, for almost all $x\in\mathbb R^d$, then $g_m(f)\in {\rm BLO}(\mathbb R^d,\omega_\mathfrak{K})$ and $\|g_m(f)\|_{{\rm BLO}(\mathbb R^d,\omega_\mathfrak{K})}\leq C\|f\|_{{\rm BMO}^\rho(\mathbb R^d,\omega_\mathfrak{K})}$ being $C>0$ independent of $f$.
\end{thm}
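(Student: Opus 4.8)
The plan is to prove the three assertions separately, in each case reducing matters to two ingredients that are already available: the $L^2(\mathbb R^d,\omega_\mathfrak K)$-boundedness of $g_m$ (from Stein's theory of symmetric diffusion semigroups) and a pointwise decay bound for $g_m$ on a ``molecule''. Concretely, the first step is to show that \emph{if $h\in L^1(\mathbb R^d,\omega_\mathfrak K)$ is supported in a Euclidean ball $B=B(x_0,r)$ and $\int h\,d\omega_\mathfrak K=0$, then for every $x\notin\theta(2B)$}
$$
g_m(h)(x)\le C\,\frac{r}{\rho(x,x_0)}\,\frac{\|h\|_{L^1(\mathbb R^d,\omega_\mathfrak K)}}{\omega_\mathfrak K\big(B(x_0,\rho(x,x_0))\big)}.
$$
To this end I write $T_{t,m}^\mathfrak K(h)(x)=\int_B\big(T_{t,m}^\mathfrak K(x,y)-T_{t,m}^\mathfrak K(x,x_0)\big)h(y)\,d\omega_\mathfrak K(y)$ and split $g_m(h)(x)^2=\int_0^{r^2}+\int_{r^2}^\infty$. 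On $t\ge r^2$ one has $|y-x_0|<r\le\sqrt t$, so \eqref{1.6} applies; since $\rho(x,x_0)\ge 2r$ yields $\rho(x,y)\ge\rho(x,x_0)/2$, and doubling lets one replace $V(x,y,\sqrt t)$ by $V(x,x_0,\sqrt t)$, what remains is an elementary integral in $t$ that produces the displayed bound via \eqref{1.1} and the $G$-invariance of $\omega_\mathfrak K$. On $t<r^2$ the size estimate \eqref{1.5}, applied to each kernel separately, already suffices: here $\rho(x,y)\ge r>\sqrt t$, so $e^{-c\rho(x,y)^2/t}$ dominates any power of $\sqrt t/r$, which after using \eqref{1.1} to write $\omega_\mathfrak K(B(x,\sqrt t))^{-1}\le C(\rho(x,x_0)/\sqrt t)^D\omega_\mathfrak K(B(x,\rho(x,x_0)))^{-1}$ makes this range negligible.

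With this in hand, the \emph{weak type $(1,1)$} bound follows from the Calderón--Zygmund decomposition $f=g+\sum_j b_j$ at height $\lambda$ for the space of homogeneous type $(\mathbb R^d,|\cdot|,\omega_\mathfrak K)$ (so $\|g\|_\infty\le C\lambda$, each $b_j$ supported in $Q_j=B(x_j,r_j)$ with vanishing $\omega_\mathfrak K$-integral, $\sum_j\|b_j\|_1\le C\|f\|_1$ and $\sum_j\omega_\mathfrak K(Q_j)\le C\lambda^{-1}\|f\|_1$): the good part is handled by Chebyshev together with $\|g\|_2^2\le\|g\|_\infty\|g\|_1$; for the bad part one takes the exceptional set $\Omega^\ast=\bigcup_j\theta(2Q_j)$, whose measure is $\le C\lambda^{-1}\|f\|_1$ by \eqref{1.2} and doubling, and integrates the pointwise bound for $g_m(b_j)$ over the $\rho$-annuli $\{2^kr_j\le\rho(x,x_j)<2^{k+1}r_j\}$, again using \eqref{1.2} and doubling to control the $\omega_\mathfrak K$-measure of each annulus by $C\,\omega_\mathfrak K(B(x_j,2^kr_j))$; summing the resulting geometric series gives $\int_{(\Omega^\ast)^c}g_m(b)\,d\omega_\mathfrak K\le C\|f\|_1$. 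The \emph{$H^1(\Delta_\mathfrak K)\to L^1$} bound reduces, via $H^1(\Delta_\mathfrak K)=H^1_{(1,2)}$, the sublinearity of $g_m$ (Minkowski in $L^2(\tfrac{dt}t)$) and a standard limiting argument (the kernel $y\mapsto T_{t,m}^\mathfrak K(x,y)$ being bounded for each fixed $t,x$), to showing $\|g_m(a)\|_{L^1(\mathbb R^d,\omega_\mathfrak K)}\le C$ for every $(1,2)$-atom $a$ supported in a ball $B$: on $\theta(2B)$ use Cauchy--Schwarz with the $L^2$-boundedness and $\|a\|_2\le\omega_\mathfrak K(B)^{-1/2}$, and outside $\theta(2B)$ use the pointwise bound (with $\|a\|_1\le 1$) and the same $\rho$-annuli summation.

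For the \emph{${\rm BMO}^\rho\to{\rm BLO}$} statement, fix $B=B(x_0,r)$, put $c=f_{\theta(8B)}$, $f_1=(f-c)\chi_{\theta(8B)}$, $f_2=(f-c)\chi_{(\theta(8B))^c}$; since $m\ge 1$ and $T_t^\mathfrak K 1=1$ by \eqref{1.4}, the operator $T_{t,m}^\mathfrak K$ annihilates constants, so $T_{t,m}^\mathfrak K(f)=T_{t,m}^\mathfrak K(f_1)+T_{t,m}^\mathfrak K(f_2)$. Using $g_m(f)\ge 0$ one has $g_m(f)(x)-\essinf_{z\in B}g_m(f)(z)\le\big(g_m(f)(x)^2-(\essinf_{z\in B}g_m(f)(z))^2\big)^{1/2}$, so by Cauchy--Schwarz it suffices to bound the $B$-average of $g_m(f)(x)^2-(\essinf_{z\in B}g_m(f)(z))^2$ by $C\|f\|_{{\rm BMO}^\rho(\mathbb R^d,\omega_\mathfrak K)}^2$. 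Split $g_m(f)(x)^2=\int_0^{4r^2}+\int_{4r^2}^\infty$; dropping the nonnegative first piece from the essential infimum leaves two contributions. For the tail $t\ge 4r^2$: since $|x-z|<2r\le\sqrt t$ for $x,z\in B$, the quantity $|T_{t,m}^\mathfrak K(f)(x)|^2-|T_{t,m}^\mathfrak K(f)(z)|^2$ is a product of $T_{t,m}^\mathfrak K(f)(x)-T_{t,m}^\mathfrak K(f)(z)$ (estimated by \eqref{1.6} against $|f-c|$, contributing a factor $\le C\tfrac r{\sqrt t}\|f\|_{{\rm BMO}^\rho}(1+\log(\sqrt t/r))$) and $T_{t,m}^\mathfrak K(f)(x)+T_{t,m}^\mathfrak K(f)(z)$ (estimated by \eqref{1.5} against $|f-c|$, contributing $\le C\|f\|_{{\rm BMO}^\rho}(1+\log(\sqrt t/r))$, by the standard logarithmic growth of ${\rm BMO}$ averages in the $\rho$-metric); integrating $\tfrac{dt}t$ over $t\ge 4r^2$ gives $\le C\|f\|_{{\rm BMO}^\rho}^2$, uniformly for $x,z\in B$. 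For the local part $t<4r^2$: $\int_0^{4r^2}|T_{t,m}^\mathfrak K(f_1)(x)|^2\tfrac{dt}t\le g_m(f_1)(x)^2$, whose $B$-average is $\le\omega_\mathfrak K(B)^{-1}\|g_m(f_1)\|_2^2\le C\omega_\mathfrak K(B)^{-1}\|f_1\|_2^2\le C\|f\|_{{\rm BMO}^\rho}^2$ by the $L^2$-boundedness and the John--Nirenberg inequality for $(\mathbb R^d,\rho,\omega_\mathfrak K)$ (a space of homogeneous type by \eqref{1.2}); and for $x\in B$, $y\notin\theta(8B)$ one has $\rho(x,y)\ge 7r>\sqrt t$, so expanding $(\theta(8B))^c$ into $\rho$-annuli and using \eqref{1.5}, \eqref{1.1} and the logarithmic growth of ${\rm BMO}^\rho$ averages bounds $\int_0^{4r^2}|T_{t,m}^\mathfrak K(f_2)(x)|^2\tfrac{dt}t$ by $C\|f\|_{{\rm BMO}^\rho}^2$, uniformly in $x\in B$. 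Averaging over $B$ and adding the two contributions completes the proof.

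The main difficulty is that the classical Calderón--Zygmund scheme does not transfer directly: in \eqref{1.5}--\eqref{1.6} the Euclidean distance is replaced by the orbit distance $\rho$ in the exponential, while \eqref{1.6} carries the Euclidean increment $|y-z|$ in the numerator and is only valid for $|y-z|<\sqrt t$. Consequently the correct ``exceptional'' and ``far'' regions must be built from orbit balls $\theta(B)$, the splitting thresholds are forced to be of size $t\sim r^2$, and the relevant averages are those over $\rho$-balls --- which is exactly why ${\rm BMO}^\rho$, and not ${\rm BMO}$, is the right hypothesis in the last statement. Beyond the Euclidean template the genuinely new analytic inputs are the doubling property \eqref{1.2} (used to pass between $B(x,\cdot)$ and $B(x_0,\cdot)$ and between Euclidean and orbit balls) and the John--Nirenberg inequality together with the logarithmic growth of averages for ${\rm BMO}^\rho$, both consequences of $(\mathbb R^d,\rho,\omega_\mathfrak K)$ being a space of homogeneous type; organising the $t$-integrals so that \eqref{1.6} is invoked only where it is licit is the principal bookkeeping obstacle.
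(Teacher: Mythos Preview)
Your argument is correct, but it follows a somewhat different route from the paper's, and the differences are worth recording.

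For the weak type $(1,1)$ bound, you run the classical scheme: take a Calder\'on--Zygmund decomposition with mean-zero bad pieces $b_j$, and use the cancellation $\int b_j\,d\omega_\mathfrak K=0$ together with the regularity estimate \eqref{1.6} to get the pointwise decay of $g_m(b_j)$ off $\theta(2Q_j)$. The paper instead defers to \cite{Li}, whose method (also used verbatim for the variation operator in Section~2) does \emph{not} rely on cancellation of the $b_j$: one writes $b_i=T_{s_i}^\mathfrak K b_i+(I-T_{s_i}^\mathfrak K)b_i$ with $s_i=r_i^2$, treats $\sum T_{s_i}^\mathfrak K b_i$ in $L^2$ via a Cotlar-type duality argument, and estimates $\int_{(\theta(2B_i^*))^c}|\Delta_\mathfrak K^{r+1}T_u^\mathfrak K(x,y)|\,d\omega_\mathfrak K(x)$ directly using \cite[Lemmas~2.3 and~2.6]{Li}. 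Your approach is more elementary and transparent once the pointwise ``molecular'' bound is in hand; the paper's approach is more intrinsic to the semigroup and does not require the mean-zero version of Calder\'on--Zygmund.

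For $H^1(\Delta_\mathfrak K)\to L^1$, you test on $(1,2)$-atoms and reuse the same pointwise bound. The paper tests on $(1,2,\Delta_\mathfrak K,1)$-atoms: writing $a=\Delta_\mathfrak K b$ converts $T_{t,m}^\mathfrak K(a)$ into $t^m\partial_t^{m+1}T_t^\mathfrak K(b)$, and then only the size estimate \eqref{1.5} (not \eqref{1.6}) is needed off $\theta(4B)$. Both work; the paper's choice of atoms trades the Lipschitz bound \eqref{1.6} for an extra $t$-derivative furnished by the atomic structure.

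For ${\rm BMO}^\rho\to{\rm BLO}$, your argument is essentially the paper's (Proposition~3.2): pass to squares, split at $t=4r_0^2$, decompose $f=f_1+f_2+f_3$, and estimate the tail via the product $|T_{t,m}^\mathfrak K(f)(u)-T_{t,m}^\mathfrak K(f)(z)|\cdot(|T_{t,m}^\mathfrak K(f)(u)|+|T_{t,m}^\mathfrak K(f)(z)|)$. The only cosmetic difference is that the paper re-centers at $f_{\theta(2^{k_0}B)}$ with $2^{k_0}r_0\sim\sqrt t$ to obtain the sharper uniform bound $|T_{t,m}^\mathfrak K(f)(x)|\le C\|f\|_{{\rm BMO}^\rho}$ (no logarithm), whereas you keep a fixed center and carry a $(1+\log(\sqrt t/r))$ factor; since $\int_{4r^2}^\infty \tfrac{r}{\sqrt t}(1+\log(\sqrt t/r))^2\,\tfrac{dt}{t}<\infty$, this is harmless.
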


Suppose that $J\subset \mathbb R$. We consider a complex function $t\rightarrow a_t$ defined on $J$. Let $\sigma>0$. The $\sigma$-variation $V_\sigma(\{a_t\}_{t\in J})$ is defined by
$$
V_\sigma(\{a_t\}_{t\in J})=\sup_{\substack{t_n<...<t_1\\t_j\in J}}\Big(\sum_{j=1}^{n-1}|a_{t_j}-a_{t_{j+1}}|^\sigma\Big)^{1/\sigma},
$$
where the supremum is taken over all finite decreasing sequences $t_n<...<t_2<t_1$ with $t_j\in J$, $j=1,...,n$.

We consider a family of Lebesgue measurable functions $\{F_t\}_{t\in J}$ defined in $\mathbb R^d$. The $\sigma$-variation $V_\sigma(\{F_t\}_{t\in J})$ of $\{F_t\}_{t\in J}$ is the function defined by
$$
V_\sigma(\{F_t\}_{t\in J})(x):=V_\sigma(\{F_t(x)\}_{t\in J}),\quad x\in\mathbb R^d.
$$
If $J$ is a numerable subset of $\mathbb R$ then $V_\sigma(\{F_t\}_{t\in J})$ defines a Lebesgue measurable function. The measurability of $V_\sigma(\{F_t\}_{t\in J})$  can be also assumed when the function $t\rightarrow F_t(x)$ is continuous by considering the usual topology of $\mathbb R$, for almost all $x\in\mathbb R^d$, although $J$ does not satisfy the countable property.

L\'{e}pingle's inequality ($\!\!$\cite{Le}) concerning to bounded martingale sequence is a very useful tool in proving variational inequalities. Pisier and Xu ($\!\!$\cite{PX}) and Bourgain ($\!\!$\cite{Bou1}) gave simple proofs for L\'{e}pingle's inequality. The Bourgain's work ($\!\!$\cite{Bou1}) has motivated many authors to study variational inequalities for families of averages, semigroups of operators and truncating classical singular integral operators ($\!\!$\cite{AJS}, \cite{BORSS}, \cite{CJRW1}, \cite{CJRW2}, \cite{HMMT}, \cite{JKRW}, \cite{JSW}, \cite{JW}, \cite{LeMXu1} and \cite{MTX}).

In general, in order to obtain boundedness results for variational operators we need to consider $\sigma >2$. This is the case, for instance, with martingales ($\!\!$\cite{Q}) or with differentiation operators ($\!\!$\cite[Remark 1.7]{CJRW1}). The $\sigma$-variation operator $V_\sigma(\{F_t\}_{t>0})$ is related with the convergence of the family $\{F_t(x)\}_{t>0}$, $x\in\mathbb R^d$. In particular, if $x\in\mathbb R^d$ and $V_\sigma(\{F_t\}_{t>0})(x)<\infty$ there exists $\lim_{t\rightarrow t_0}F_t(x)$, for every $t_0\in (0,\infty)$. Suppose that $\{S_t\}_{t>0}$ is a family of bounded operators on $L^p(\mathbb R^d,\omega_\mathfrak{K})$, $1\leq p<\infty$. We define the $\sigma$-variation operator $V_\sigma(\{S_t\}_{t>0})$ of $\{S_t\}_{t>0}$ by
$$
V_\sigma (\{S_t\}_{t>0})(f)=V_\sigma (\{S_tf\}_{t>0}),\quad f\in L^p(\mathbb R^d,\omega_\mathfrak K).
$$
If $V_\sigma(\{S_t\}_{t>0}$ is bounded on $L^p(\mathbb R^d,\omega_\mathfrak{K})$ then there exists $\lim_{t\rightarrow t_0}S_tf(x)$ for almost all $x\in\mathbb R^d$ and all $t_0\in (0,\infty)$. The use of $\sigma$-variation operator instead of maximal operators in getting pointwise convergence has advantages, because the maximal operators require of a dense subset $\mathcal D\subset L^p(\mathbb R^d,\omega_\mathfrak{K})$ where the pointwise convergence holds. However, the use of the $\sigma$- variation operators for this purpose is more involved. 

When $\sigma=2$ a good substitute of the $\sigma$-variation operator is the oscillation operator defined as follows: Let $\{t_j\}_{j\in \mathbb N}$ be a decreasing sequence of positive numbers. If $\{S_t\}_{t>0}$ is as above we define the oscillation operator $\mathcal O(\{t_j\}_{j\in \mathbb N}, \{S_t\}_{t>0})$ as
$$
\mathcal O(\{t_j\}_{j\in \mathbb N}, \{S_t\}_{t>0})(f)(x)=\left(\sum_{j\in \mathbb N} \sup_{t_{j+1}\leq\varepsilon_{j+1}<\varepsilon_j\leq t_j}|S_{\varepsilon_j}(f)(x)-S_{\varepsilon_{j+1}}(f)(x)|^2\right)^{1/2}.
$$
According to \cite[Corollary 4.5]{LeMXu1} the $\sigma$-variation operator $V_\sigma(\{T_{t,m}^\mathfrak K \}_{t>0})$ is bounded from $L^p(\mathbb R^d,\omega_\mathfrak{K})$ into itself, for every $1<p<\infty$ and $m\in\mathbb N$. Furthermore, by \cite[p. 2091]{LeMXu1} the oscillation operator $\mathcal O(\{t_j\}_{j\in \mathbb N},\{T_{t,m}^\mathfrak K \}_{t>0})$ is bounded from $L^p(\mathbb R^d,\omega_\mathfrak{K})$ into itself, for every $1<p<\infty$ and every decreasing sequence $\{t_j\}_{j\in \mathbb N}$ of positive numbers.

\begin{thm}\label{Th1.3}
Let $m\in\mathbb N$ and $\sigma>2$. The $\sigma$-variation operator $V_\sigma(\{T_{t,m}^\mathfrak K \}_{t>0})$ is bounded from $L^1(\mathbb R^d,\omega_\mathfrak{K})$ into $L^{1,\infty}(\mathbb R^d,\omega_\mathfrak{K})$ and from $H^1(\Delta_\mathfrak K)$ into $L^1(\mathbb R^d,\omega_\mathfrak{K})$. Moreover, a function $f\in L^1(\mathbb R^d,\omega_\mathfrak{K})$ is in $H^1(\Delta_\mathfrak K)$ if and only if $V_\sigma(\{T_t^\mathfrak K \}_{t>0})(f)\in L^1(\mathbb R^d,\omega_\mathfrak{K})$ and the quantities  $\|f\|_{L^1(\mathbb R^d,\omega_\mathfrak{K})}+\|T_{*,0}^\mathfrak K (f)\|_{L^1(\mathbb R^d,\omega_\mathfrak{K})}$ and $\|f\|_{L^1(\mathbb R^d,\omega_\mathfrak{K})}+\|V_\sigma(\{T_t^\mathfrak K \}_{t>0})(f)\|_{L^1(\mathbb R^d,\omega_\mathfrak{K})}$ are equivalent.

If $f\in {\rm BMO}^\rho(\mathbb R^d,\omega_\mathfrak{K})$ and $V_\sigma(\{T_{t,m}^\mathfrak K \}_{t>0})(f)(x)<\infty$,  almost every $x\in\mathbb R^d$, then $V_\sigma(\{T_{t,m}^\mathfrak K \}_{t>0})(f)$ belongs to ${\rm BLO}(\mathbb R^d,\omega_\mathfrak{K})$ and 
$$\|V_\sigma(\{T_{t,m}^\mathfrak K \}_{t>0})(f)\|_{{\rm BLO}(\mathbb R^d,\omega_\mathfrak{K})}\leq C\|f\|_{{\rm BMO}^\rho (\mathbb R^d,\omega_\mathfrak{K})},$$
where $C>0$ does not depend on $f$.
\end{thm}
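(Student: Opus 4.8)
The plan is to regard $V_\sigma(\{T_{t,m}^\mathfrak K\}_{t>0})$ as a singular integral with values in the Banach space of functions on $(0,\infty)$ of finite $\sigma$-variation, with ``kernel'' $\{T_{t,m}^\mathfrak K(x,y)\}_{t>0}$. The estimate that replaces the Hörmander condition, and which is where the two metrics enter, is
$$
\sup_{y\in B(x_0,r)}\int_{\rho(x,x_0)\geq 2r}V_\sigma\big(\{T_{t,m}^\mathfrak K(x,y)-T_{t,m}^\mathfrak K(x,x_0)\}_{t>0}\big)\,d\omega_\mathfrak{K}(x)\leq C,
$$
uniformly in $x_0$ and $r$. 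I would prove it using $V_\sigma\leq V_1\leq\int_0^\infty|\partial_t(\cdot)|\,dt$, the identity $\partial_tT_{t,m}^\mathfrak K=t^{-1}(mT_{t,m}^\mathfrak K+T_{t,m+1}^\mathfrak K)$, and a splitting of the $t$-integral at $t=|y-x_0|^2$: for $t>|y-x_0|^2$ the smoothness estimate \eqref{1.6} applies and yields the gain $|y-x_0|/\sqrt t$, while for $t<|y-x_0|^2$ one estimates the two terms separately by \eqref{1.5}, the $\rho$-Gaussian being genuinely decaying since $\rho(x,y)\geq\rho(x,x_0)/2\geq r>|y-x_0|$ on the region of integration; integrating in $x$ against $\int_{\rho(x,y)\geq R}V(x,y,\sqrt t)^{-1}e^{-c\rho(x,y)^2/t}\,d\omega_\mathfrak{K}(x)\leq C_N\min\{1,(\sqrt t/R)^N\}$ (which follows from \eqref{1.1}, \eqref{1.2} and \eqref{1.4}) closes the bound.

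With this in hand, the weak type $(1,1)$ inequality follows from a Calderón--Zygmund decomposition at level $\lambda$: the good part is handled by the $L^2$-boundedness recalled before the statement, and for the bad part $b=\sum_jb_j$ ($b_j$ supported on Euclidean balls $B_j$, $\int b_j\,d\omega_\mathfrak{K}=0$) one removes $\bigcup_j\theta(4B_j)$, of measure $\lesssim\#(G)\lambda^{-1}\|f\|_{L^1(\omega_\mathfrak K)}$ by \eqref{1.2} and doubling, and uses the kernel estimate and the cancellation of $b_j$ on the complement. The bound $H^1(\Delta_\mathfrak K)\to L^1(\omega_\mathfrak K)$ is obtained on a $(1,2)$-atom $a$ on a ball $B$ by estimating over $\theta(2B)$ via the $L^2$-boundedness, Cauchy--Schwarz, \eqref{1.2} and doubling, and over $\mathbb R^d\setminus\theta(2B)$ via the kernel estimate and $\int a\,d\omega_\mathfrak K=0$; passing to sums is allowed because each $T_{t,m}^\mathfrak K$ is bounded on $L^1(\omega_\mathfrak K)$ ($\int|T_{t,m}^\mathfrak K(x,y)|\,d\omega_\mathfrak K(x)\leq C$ by \eqref{1.5}). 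For the characterization, observe that $T_t^\mathfrak K f(x)\to0$ as $t\to\infty$ for $f\in L^1(\omega_\mathfrak K)$ (dominated convergence, $V(x,y,\sqrt t)\to\infty$), hence $T_{*,0}^\mathfrak K f(x)\leq V_\sigma(\{T_t^\mathfrak K f(x)\}_{t>0})$ since $|a_t-a_s|\leq V_\sigma(\{a_\tau\}_\tau)$ for all $t,s$; together with the maximal--function characterization of $H^1(\Delta_\mathfrak K)$ in \cite[Theorem 2.2]{ADH} this gives that $V_\sigma(\{T_t^\mathfrak K\}_{t>0})(f)\in L^1(\omega_\mathfrak K)$ implies $f\in H^1(\Delta_\mathfrak K)$ with $\|f\|_{L^1}+\|T_{*,0}^\mathfrak K f\|_{L^1}\lesssim\|f\|_{L^1}+\|V_\sigma(\{T_t^\mathfrak K\})(f)\|_{L^1}$, while the converse estimate is the case $m=0$ of the $H^1\to L^1$ bound together with the same characterization.

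For the ${\rm BLO}$ estimate, fix a Euclidean ball $B=B(x_B,r_B)$, set $\widehat B=\theta(2B)$ and split $f=f_1+f_2+f_{\widehat B}$ with $f_1=(f-f_{\widehat B})\chi_{\widehat B}$ (so $\int f_1\,d\omega_\mathfrak K=0$) and $f_2=(f-f_{\widehat B})\chi_{\mathbb R^d\setminus\widehat B}$; as $T_{t,m}^\mathfrak K$ annihilates constants for $m\geq1$ and leaves them $t$-independent for $m=0$, one has $V_\sigma(\{T_{t,m}^\mathfrak K\})(f)=V_\sigma(\{T_{t,m}^\mathfrak K f_1+T_{t,m}^\mathfrak K f_2\}_{t>0})$. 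I would establish four estimates, all $\leq C\|f\|_{{\rm BMO}^\rho(\mathbb R^d,\omega_\mathfrak K)}$: (i) $V_\sigma(\{T_{t,m}^\mathfrak K f_1(x)\}_{t>cr_B^2})$ for a.e.\ $x\in B$, using $\operatorname{supp}f_1\subset\widehat B$ and the $G$-invariance of $\omega_\mathfrak K$ and $\rho$ (so that $\omega_\mathfrak K(B(y,\sqrt t))\sim\omega_\mathfrak K(B(x_B,\sqrt t))$ for $y\in\widehat B$ and $t>cr_B^2$, even though $|y-x_B|$ need not be small) together with the polynomial lower bound in \eqref{1.1}; (ii) $V_\sigma(\{T_{t,m}^\mathfrak K f_2(x)-T_{t,m}^\mathfrak K f_2(x_B)\}_{t>0})$ for a.e.\ $x\in B$, using \eqref{1.6} for $t>|x-x_B|^2$ and \eqref{1.5} for $t<|x-x_B|^2$ ($\rho$-Gaussian decaying since $\rho(x_B,y)\geq r_B$ on $\operatorname{supp}f_2$) plus a telescoping of $\rho$-ball averages of $f$; (iii) $V_\sigma(\{T_{t,m}^\mathfrak K f_2(x_B)\}_{0<t\leq cr_B^2})$, from the strong $\rho$-Gaussian decay at small $t$; and (iv) $\tfrac{1}{\omega_\mathfrak K(B)}\int_BV_\sigma(\{T_{t,m}^\mathfrak K f_1\})\,d\omega_\mathfrak K$, from the $L^2$-boundedness and John--Nirenberg on the space of homogeneous type $(\mathbb R^d,\rho,\omega_\mathfrak K)$, which gives $\|f_1\|_{L^2(\omega_\mathfrak K)}\lesssim\|f\|_{{\rm BMO}^\rho}\,\omega_\mathfrak K(\widehat B)^{1/2}$, together with doubling.

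Combining these: restricting the $t$-range to $(cr_B^2,\infty)$ and using the reverse triangle inequality for $V_\sigma$ with (i) and (ii) yields $\essinf_{z\in B}V_\sigma(\{T_{t,m}^\mathfrak K\})(f)(z)\geq V_\sigma(\{T_{t,m}^\mathfrak K f_2(x_B)\}_{t>cr_B^2})-C\|f\|_{{\rm BMO}^\rho}$, while subadditivity of $V_\sigma$ with (ii), (iii), (iv) yields $\tfrac{1}{\omega_\mathfrak K(B)}\int_BV_\sigma(\{T_{t,m}^\mathfrak K\})(f)\,d\omega_\mathfrak K\leq V_\sigma(\{T_{t,m}^\mathfrak K f_2(x_B)\}_{t>cr_B^2})+C\|f\|_{{\rm BMO}^\rho}$; subtracting gives $\tfrac{1}{\omega_\mathfrak K(B)}\int_B\big(V_\sigma(\{T_{t,m}^\mathfrak K\})(f)-\essinf_BV_\sigma(\{T_{t,m}^\mathfrak K\})(f)\big)\,d\omega_\mathfrak K\leq C\|f\|_{{\rm BMO}^\rho}$, and the hypothesis that $V_\sigma(\{T_{t,m}^\mathfrak K\})(f)$ is finite a.e.\ guarantees $V_\sigma(\{T_{t,m}^\mathfrak K f_2(x_B)\}_{t>cr_B^2})<\infty$ (so the subtraction is meaningful) and $V_\sigma(\{T_{t,m}^\mathfrak K\})(f)\in L^1_{\rm loc}(\omega_\mathfrak K)$; taking the supremum over $B$ finishes the proof. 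I expect the main difficulty to be precisely this last step: the far-field quantity $V_\sigma(\{T_{t,m}^\mathfrak K f_2(x_B)\}_{t>cr_B^2})$ can be large (it carries the possible divergence of $V_\sigma(\{T_{t,m}^\mathfrak K\})(f)$), so it must cancel between the average and the essential infimum, which leaves no room for a spurious multiplicative constant; this forces the localizations in (i)--(iii) and the splittings of $t$ to be arranged on matched ranges, and it is exactly here that the mismatch between the Euclidean metric in \eqref{1.6} and the orbit metric $\rho$ in the Gaussian factors and in ${\rm BMO}^\rho$ has to be handled with care --- in particular (i), which makes the local part negligible at large times, relies on $G$-invariance rather than on any Euclidean closeness.
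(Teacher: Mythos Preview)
Your outline is sound and would yield a correct proof, but it diverges from the paper's argument in each of the three parts.

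For the weak type $(1,1)$, the paper does \emph{not} use the Hörmander condition you propose with mean-zero bad functions. Instead it follows a semigroup-smoothing scheme (in the spirit of \cite{BK} and \cite{Li}): with $s_i=r_i^2$ it writes $b_i=T_{s_i}^\mathfrak K b_i+(I-T_{s_i}^\mathfrak K)b_i$, handles $\sum_i T_{s_i}^\mathfrak K b_i$ in $L^2$ via a duality/positivity estimate of Li, and bounds $(I-T_{s_i}^\mathfrak K)b_i$ off $\theta(2B_i^*)$ by writing $T_t^\mathfrak K-T_{t+s_i}^\mathfrak K=-\int_t^{t+s_i}\partial_u T_u^\mathfrak K\,du$ and invoking \cite[Lemmas~2.3 and 2.6]{Li}. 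No cancellation of $b_i$ is used. Your Hörmander route is more elementary and self-contained once the kernel estimate is in hand; the paper's route avoids any need for $\int b_i\,d\omega_\mathfrak K=0$ and recycles machinery already available from \cite{Li}.

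For $H^1\to L^1$, the paper works with $(1,2,\Delta_\mathfrak K,1)$-atoms rather than $(1,2)$-atoms: writing $a=\Delta_\mathfrak K b$ turns $T_{t,r}^\mathfrak K a$ into $t^r\partial_t^{r+1}T_t^\mathfrak K b$, and the extra derivative is then absorbed by the same lemmas of Li. Your approach via $(1,2)$-atoms and the Hörmander estimate is equally valid since the two atomic spaces coincide (\cite[Theorem~1.5]{DH1}).

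For the ${\rm BLO}$ bound, the paper first proves the ${\rm BMO}$ estimate and then upgrades to ${\rm BLO}$ by a different mechanism: for each $x\in B$ it chooses a near-optimal finite sequence $\{t_j(x)\}$ and splits $B$ into three sets according to whether all $t_j(x)$ lie below $4r_0^2$, all above, or straddle it; in the ``large-time'' cases it compares $T_{t,m}^\mathfrak K f(x)$ directly to $T_{t,m}^\mathfrak K f(z)$ for $z\in B$ and bounds $\int_{4r_0^2}^\infty|\partial_t[T_{t,m}^\mathfrak K f(u)-T_{t,m}^\mathfrak K f(z)]|\,dt$ uniformly in $u,z\in B$. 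This replaces your reference-point quantity $V_\sigma(\{T_{t,m}^\mathfrak K f_2(x_B)\}_{t>cr_B^2})$ and sidesteps the delicate exact cancellation you flag, at the cost of a somewhat heavier case analysis. Your scheme is cleaner conceptually; the paper's is more explicit and uses only subadditivity in one direction.
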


\begin{thm}\label{Th1.4}
Let $m\in\mathbb N$. Suppose that $\{t_j\}_{j\in \mathbb N}$ is a decreasing sequence of positive numbers. The oscillation operator operator $\mathcal O(\{t_j\}_{j\in \mathbb N},\{T_{t,m}^\mathfrak K \}_{t>0})$ is bounded from $L^1(\mathbb R^d,\omega_\mathfrak{K})$ into $L^{1,\infty}(\mathbb R^d,\omega_\mathfrak{K})$ and from $H^1(\Delta _\mathfrak K)$ into $L^1(\mathbb R^d,\omega_\mathfrak{K})$. 

Furthermore, if $f\in {\rm BMO}^\rho(\mathbb R^d,\omega_\mathfrak{K})$ and $\mathcal O(\{t_j\}_{j\in \mathbb N},\{T_{t,m}^\mathfrak K \}_{t>0})(f)(x)<\infty$,   for almost all $x\in\mathbb R^d$, then $\mathcal O(\{t_j\}_{j\in \mathbb N},\{T_{t,m}^\mathfrak K \}_{t>0})(f)\in {\rm BLO}(\mathbb R^d,\omega_\mathfrak{K})$ and 
$$\|\mathcal O(\{t_j\}_{j\in \mathbb N},\{T_{t,m}^\mathfrak K \}_{t>0})(f)\|_{{\rm BLO}(\mathbb R^d,\omega_\mathfrak{K})}\leq C\|f\|_{{\rm BMO}^\rho(\mathbb R^d,\omega_\mathfrak{K})},$$
where $C>0$ is independent of $f$.
\end{thm}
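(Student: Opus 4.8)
The plan is to treat $\mathcal{O}(\{t_j\}_{j\in\mathbb N},\{T_{t,m}^\mathfrak K\}_{t>0})$ as a vector valued operator, following the scheme of Theorem~\ref{Th1.3}. Fix the decreasing sequence $\{t_j\}_{j\in\mathbb N}$ and let $E$ be the Banach space of complex functions $h$ on $(0,\infty)$ with the seminorm $\|h\|_E=\big(\sum_{j\in\mathbb N}\sup_{t_{j+1}\le s<t\le t_j}|h(t)-h(s)|^2\big)^{1/2}$, which vanishes on constants. Then $\mathcal{O}(\{t_j\},\{T_{t,m}^\mathfrak K\})(f)(x)=\|\{T_{t,m}^\mathfrak K(f)(x)\}_{t>0}\|_E$, and the operator $f\mapsto\{T_{t,m}^\mathfrak K(f)\}_{t>0}$ has the $E$-valued kernel $(x,y)\mapsto\{T_{t,m}^\mathfrak K(x,y)\}_{t>0}$; its $L^p(\mathbb R^d,\omega_\mathfrak K)$-boundedness for $1<p<\infty$ is already known. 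Two remarks drive the whole scheme. First, every absolutely continuous $h\in E$ satisfies $\|h\|_E\le\int_0^\infty|h'(t)|\,dt$ (each supremum is bounded by the total variation over $[t_{j+1},t_j]$, and the intervals are essentially disjoint); hence for an absolutely continuous family $\{F_t(x)\}_{t>0}$, $\|\{F_t(x)\}_{t>0}\|_E\le\int_0^\infty|\partial_tF_t(x)|\,dt$. Second, from $\partial_tT_{t,m}^\mathfrak K=\frac1t\big(mT_{t,m}^\mathfrak K+T_{t,m+1}^\mathfrak K\big)$ and \eqref{1.5}--\eqref{1.6} applied to $T_{t,m}^\mathfrak K$ and $T_{t,m+1}^\mathfrak K$, the kernel of $\partial_tT_{t,m}^\mathfrak K$ satisfies $|\partial_tT_{t,m}^\mathfrak K(x,y)|\le\frac{C}{t}\frac{e^{-c\rho(x,y)^2/t}}{V(x,y,\sqrt t)}$ and, when $|y-z|<\sqrt t$, $|\partial_tT_{t,m}^\mathfrak K(x,y)-\partial_tT_{t,m}^\mathfrak K(x,z)|\le\frac{C}{t}\frac{|y-z|}{\sqrt t}\frac{e^{-c\rho(x,y)^2/t}}{V(x,y,\sqrt t)}$ (and likewise in the first variable, by the symmetry of the heat kernel).

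For the $L^1\to L^{1,\infty}$ bound I would use a Calder\'on--Zygmund decomposition $f=g+\sum_ib_i$ adapted to $(\mathbb R^d,|\cdot|,\omega_\mathfrak K)$, with $b_i$ supported in $B_i=B(x_i,r_i)$, $\int b_i\,d\omega_\mathfrak K=0$, $\sum_i\|b_i\|_{L^1(\omega_\mathfrak K)}\le C\|f\|_{L^1(\omega_\mathfrak K)}$ and $\sum_i\omega_\mathfrak K(B_i)\le C\|f\|_{L^1(\omega_\mathfrak K)}/\lambda$; the good part is controlled by the $L^2$-boundedness, the set $\bigcup_i\theta(B(x_i,\kappa r_i))$ ($\kappa$ fixed, by \eqref{1.2}) has small measure, and on its complement one estimates, for each $i$, $\mathcal O(b_i)(x)\le\int_0^\infty|\partial_tT_{t,m}^\mathfrak K(b_i)(x)|\,dt$ and then, using $\int b_i\,d\omega_\mathfrak K=0$ with the smoothness bound of the kernel of $\partial_tT_{t,m}^\mathfrak K$ when $\sqrt t\ge r_i$ and its size bound when $\sqrt t\le r_i$, one gets $\int_{\mathbb R^d\setminus\theta(B(x_i,\kappa r_i))}\mathcal O(b_i)\,d\omega_\mathfrak K\le C\|b_i\|_{L^1(\omega_\mathfrak K)}$. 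Here the two metrics interact: the Gaussian factor carries $\rho$, and one uses that $\rho(x,y)\sim\rho(x,x_i)\gtrsim r_i$ when $y\in B_i$, $x\notin\theta(B(x_i,\kappa r_i))$ (since $\rho$ only sees $G$-orbits), together with \eqref{1.1}, \eqref{1.2} and doubling, to run the summation over $\rho$-annuli — exactly the argument behind Theorem~\ref{Th1.3}. For $H^1(\Delta_\mathfrak K)\to L^1(\omega_\mathfrak K)$ I would use the atomic description: for a $(1,2,\Delta_\mathfrak K,M)$-atom $a=\Delta_\mathfrak K^Mb$ with $\mathrm{supp}(\Delta_\mathfrak K^\ell b)\subset\theta(B)$, $B=B(x_B,r_B)$, the identity $T_{t,m}^\mathfrak K\Delta_\mathfrak K^M=t^{-M}T_{t,m+M}^\mathfrak K$ turns the cancellation into the extra decaying factor $t^{-M}$; splitting $\int\mathcal O(a)\,d\omega_\mathfrak K$ over $\theta(B(x_B,\kappa r_B))$ ($L^2$-boundedness, doubling, condition (iii)) and over its complement (via $\mathcal O(a)(x)\le C\int_0^\infty t^{-M-1}\big(|T_{t,m+M}^\mathfrak K b(x)|+|T_{t,m+M+1}^\mathfrak K b(x)|\big)\,dt$, the size estimate \eqref{1.5}, $\|b\|_{L^1(\theta(B))}\le Cr_B^{2M}$, and again the $\rho$-annular summation) gives $\int\mathcal O(a)\,d\omega_\mathfrak K\le C$ uniformly in the atom.

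For the last assertion, $\mathrm{BMO}^\rho\to\mathrm{BLO}$, I would again mirror Theorem~\ref{Th1.3}. Fix a Euclidean ball $B=B(x_B,r_B)$, put $\widehat B=B(x_B,\kappa r_B)$ and decompose $f=f_1+f_2+f_3$ with $f_1=(f-f_{\theta(\widehat B)})\chi_{\theta(\widehat B)}$, $f_2=(f-f_{\theta(\widehat B)})\chi_{\mathbb R^d\setminus\theta(\widehat B)}$, $f_3=f_{\theta(\widehat B)}$. Since $T_{t,m}^\mathfrak K$ annihilates constants for $m\ge1$ and keeps them fixed and $t$-independent for $m=0$, $\mathcal O(f_3)\equiv0$, so by subadditivity $\mathcal O(f)\le\mathcal O(f_1)+\mathcal O(f_2)$ and $\mathcal O(f_2)\le\mathcal O(f)+\mathcal O(f_1)$ a.e. For $f_1$: $f\in\mathrm{BMO}^\rho(\mathbb R^d,\omega_\mathfrak K)$ lies locally in $L^2(\omega_\mathfrak K)$ and, by the John--Nirenberg inequality for the space of homogeneous type $(\mathbb R^d,\rho,\omega_\mathfrak K)$ (note $B_\rho(x,r)=\theta(B(x,r))$ and \eqref{1.2}), $\|f_1\|_{L^2(\omega_\mathfrak K)}^2\le C\omega_\mathfrak K(\theta(\widehat B))\|f\|_{\mathrm{BMO}^\rho}^2$; by the $L^2$-boundedness of $\mathcal O$ and doubling, $\frac{1}{\omega_\mathfrak K(B)}\int_B\mathcal O(f_1)\,d\omega_\mathfrak K\le\big(\frac{1}{\omega_\mathfrak K(B)}\|\mathcal O(f_1)\|_{L^2(\omega_\mathfrak K)}^2\big)^{1/2}\le C\|f\|_{\mathrm{BMO}^\rho}$. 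For $f_2$: applying the first remark to the difference $\{T_{t,m}^\mathfrak K f_2(x)-T_{t,m}^\mathfrak K f_2(x')\}_{t>0}$, then the smoothness bound of the kernel of $\partial_tT_{t,m}^\mathfrak K$ for $t\gtrsim r_B^2$ (where $|x-x'|\le2r_B<\sqrt t$ gives the gain $|x-x'|/\sqrt t\lesssim r_B/\sqrt t$) and the size bound for $t\lesssim r_B^2$, combined with a $\rho$-annular decomposition of $\mathbb R^d\setminus\theta(\widehat B)$ on which $|f(y)-f_{\theta(\widehat B)}|$ grows at most linearly in the annulus index while the $\omega_\mathfrak K$-measures of consecutive annuli are comparable, one proves $|\mathcal O(f_2)(x)-\mathcal O(f_2)(x')|\le C\|f\|_{\mathrm{BMO}^\rho}$ for a.e.\ $x,x'\in B$. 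One then combines these two facts as in Theorem~\ref{Th1.3}: the hypothesis $\mathcal O(f)(x)<\infty$ a.e.\ together with $\mathcal O(f_1)\in L^2(\omega_\mathfrak K)$ forces $\mathcal O(f_2)<\infty$ a.e., so the almost-constancy of $\mathcal O(f_2)$ on $B$ and the control of $\mathcal O(f_1)$ in average produce a constant $\ell_B$ with $\ell_B\le\essinf_B\mathcal O(f)+C\|f\|_{\mathrm{BMO}^\rho}$ and $\frac1{\omega_\mathfrak K(B)}\int_B(\mathcal O(f)-\ell_B)\,d\omega_\mathfrak K\le C\|f\|_{\mathrm{BMO}^\rho}$, which is the $\mathrm{BLO}$ estimate.

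The main obstacle throughout — exactly as in Theorems~\ref{Th1.1}--\ref{Th1.3}, and the reason the classical Calder\'on--Zygmund machinery does not apply directly — is the mismatch between the two metrics: the heat kernel and its time derivatives decay Gaussianly in the orbit distance $\rho$, whereas the available regularity is Euclidean-Lipschitz and only in the range $|y-z|<\sqrt t$. Every step (the bad-part estimate, the atom estimate, and especially the almost-constancy of $\mathcal O(f_2)$ on $B$) forces one to run the summation over $\rho$-annuli, passing between $\rho$-balls and Euclidean balls through \eqref{1.1}, \eqref{1.2} and doubling, while squeezing out of the Euclidean factor $|y-z|/\sqrt t$ enough decay to make that series converge — and doing so with constants uniform in the fixed sequence $\{t_j\}_{j\in\mathbb N}$. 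Extracting that decay and carrying it through the oscillation seminorm is the delicate technical point I expect to absorb most of the work.
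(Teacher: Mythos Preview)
Your overall scheme is right — the paper proves Theorem~\ref{Th1.3} in detail and explicitly says Theorem~\ref{Th1.4} follows by the same argument, so ``repeat Theorem~\ref{Th1.3}'' is exactly the intended strategy. Two points deserve comment, one a genuine difference in method and one a real gap.

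\textbf{The weak-type bound.} Your $L^1\to L^{1,\infty}$ argument is \emph{not} the paper's. You take a Calder\'on--Zygmund decomposition with mean-zero bad pieces $\int b_i\,d\omega_\mathfrak K=0$ and use \eqref{1.6} as a H\"ormander-type condition. The paper instead uses the Blunck--Kunstmann semigroup variant: it does \emph{not} assume $\int b_i=0$, but writes $b_i=T_{s_i}^\mathfrak K b_i+(I-T_{s_i}^\mathfrak K)b_i$ with $s_i=r_i^2$, handles $\sum T_{s_i}^\mathfrak K b_i$ via the $L^2$ theory, and for $(I-T_{s_i}^\mathfrak K)b_i$ exploits $T_{t,m}^\mathfrak K(I-T_{s_i}^\mathfrak K)=t^m\partial_t^m(T_t^\mathfrak K-T_{t+s_i}^\mathfrak K)$ together with Lemmas~2.3 and~2.6 of \cite{Li}. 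Your route should also work (the kernel regularity \eqref{1.6} is available, and the $\rho$-annular summations go through as you say), so this is a legitimate alternative rather than an error.

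\textbf{The ${\rm BLO}$ step.} Here your sketch has a real gap. From ``$\mathcal O(f_1)$ is controlled in $L^2$-average on $B$'' and ``$\mathcal O(f_2)$ is almost constant on $B$'' you can only conclude $\mathcal O(f)\in{\rm BMO}(\mathbb R^d,\omega_\mathfrak K)$, not ${\rm BLO}$: the obstruction is that $\essinf_B \mathcal O(f)\ge \essinf_B\mathcal O(f_2)-\esssup_B\mathcal O(f_1)$, and you have no pointwise control on $\mathcal O(f_1)$. Producing a constant $\ell_B$ with $\ell_B\le\essinf_B\mathcal O(f)+C\|f\|_{{\rm BMO}^\rho}$ does not follow from your ingredients. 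The paper's argument (Section~\ref{S2.3}) is more delicate and does \emph{not} proceed by splitting $\mathcal O(f)\le \mathcal O(f_1)+\mathcal O(f_2)$. Instead, for each $x\in B$ one fixes a near-optimal partition $\{t_j(x)\}$, splits it at the threshold $4r_0^2$, and for the large-time portion compares $\sum_j|T_{t_j,m}^\mathfrak K f(x)-T_{t_{j+1},m}^\mathfrak K f(x)|^\sigma$ directly to the \emph{same} $\ell^\sigma$ sum evaluated at an arbitrary $z\in B$; since this second sum is dominated by $V_\sigma(\{T_{t,m}^\mathfrak K\}_{t>0})(f)(z)$, one gets a pointwise comparison of $\mathcal O(f)(x)$ against $\essinf_{z\in B}\mathcal O(f)(z)$ plus terms that are handled as you indicate (this is the content of claim~\eqref{claim}). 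The same device works verbatim for the oscillation operator. You should replace your ``combination'' paragraph by this argument.
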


In the next sections we will prove our Theorems. Throughout this paper by $c$, $C$ we always denote positive constants that can change in each occurrence. 

\section{Proof of Theorems \ref{Th1.3} and \ref{Th1.4}}

We are going to prove Theorem \ref{Th1.3}. In order to establish the boundedness properties for the oscillation operator we can proceed analogously. 

\subsection{Variation operators on $L^1(\mathbb R^d,\omega_\mathfrak{K})$}
According to \cite[Corollary 6.1]{LeMXu1} (see also \cite[Theorem 3.3]{JoRe}) since $\{T_t^\mathfrak K \}_{t>0}$ is a diffusion semigroup of operators $V_\sigma(\{T_{t,m}^\mathfrak K\}_{t>0})$ is bounded from $L^p(\mathbb R ^d,\omega_\mathfrak{K})$ into itself, for every $1<p<\infty$.

By using again \eqref{1.5} we can see that
$$
\int_{\mathbb R^d}|t^m\partial _t^mT_t^\mathfrak K (x,y)||h(y)|d\omega_\mathfrak{K}(y)\leq \frac{C}{\omega _\mathfrak K(B(x,\sqrt{t}))}\|h\|_{L^1(\mathbb R^d,\omega _\mathfrak K)},\quad x\in \mathbb R^d\mbox{ and }t>0.
$$
Then, we can define $V_\sigma (\{T_{t,m}^\mathfrak K\}_{t>0})(h)(x)$, for every $x\in \mathbb R^d$. Moreover, by using dominated convergence theorem we can see that, for every $x\in \mathbb R^d$, the function $t\longrightarrow T_{t,m}(h)(x)$ is continuous in $(0,\infty )$. It follows that 
$$
V_\sigma (\{T_{t,m}\}_{t>0})(h)(x)=\sup_{\substack{0<t_n<...<t_1\\t_j\in \mathbb Q,j=1,...,n}}\Big(\sum_{j=1}^{n-1}|T_{t_j,m}^\mathfrak K (h)(x)-T_{t_{j+1},m}^\mathfrak K(h)(x)|^\sigma\Big)^{1/\sigma},\quad x\in \mathbb R^d.
$$
Since the set of finite subsets of $\mathbb Q$ is countable we conclude that the function $V_\sigma (\{T_{t,m}^\mathfrak K\}_{t>0})(h)$ is measurable on $\mathbb R^d$.

Let us see that the variation operator $V_\sigma(\{T_{t,m}^\mathfrak K\}_{t>0})$ is bounded from $L^1(\mathbb R ^d,\omega_\mathfrak{K})$ into $L^{1,\infty}(\mathbb R ^d,\omega_\mathfrak{K})$.

The triple $(\mathbb R^d,|\cdot|,\omega_\mathfrak{K})$ is a space of homogeneous type. According to the Calder\'on-Zygmund decomposition (see, for instance, \cite[Theorem 3.1]{BK} or \cite[Theorem 2.10]{Ai}), there exists $M>0$ such that, for every $f\in L^1(\mathbb R^d,\omega_\mathfrak{K})$ and $\lambda >0$, we can find a measurable function $g$, a sequence $(b_i)_{i\in \mathbb N}$ of measurable functions and a sequence $(B_i=B(x_i,r_i))_{i\in \mathbb N} $ of Euclidean balls satisfying that
\begin{itemize}
\item[(i)] $f=g+\sum_{i\in \mathbb N}b_i$;
\item[(ii)] $\|g\|_\infty \leq C\lambda$;
\item[(ii)] $\mbox{ supp } (b_i)\subset B_i^*$, $i\in \mathbb N$, and $\#\{j\in \mathbb N:x\in B_j^*\}\leq M$, for every $x\in \mathbb R^d$;
\item[(iv)] $\|b_i\|_{L^1(\mathbb R^d, \omega_\mathfrak K)}\leq C\lambda \omega_\mathfrak{K}(B_i)$, $i\in \mathbb N$;
\item[(v)] $\sum_{i\in \mathbb N}\omega_\mathfrak{K}(B_i)\leq \frac{C}{\lambda}\|f\|_{L^1(\mathbb R^d, \omega_\mathfrak K)}$.
\end{itemize}
Here $B^*$ represents the ball $B^*=B(x_0,\alpha r_0)$, when $B=B(x_0,r_0)$. The constants $C,\alpha$ do not depend on $f$.

Let $\lambda >0$ and $f\in L^1(\mathbb R^d,\omega_\mathfrak{K})\cap L^2(\mathbb R^d,\omega_\mathfrak{K})$. We write $f=g+b$, where $b=\sum_{i\in \mathbb N}b_i$ as above. The series defining $b(x)$ is actually a finite sum for every $x\in \mathbb R^d$. Furthermore, the series $\sum_{i\in \mathbb N} b_i$ converges in $L^1(\mathbb R^d,\omega_\mathfrak{K})$ and 
$$
\|b\|_{L^1(\mathbb R^d, \omega_\mathfrak K)}\leq \sum_{i\in \mathbb N}\|b_i\|_{L^1(\mathbb R^d, \omega_\mathfrak K)} \leq C\lambda \sum_{i\in \mathbb N}\omega_\mathfrak{K}(B_i)\leq C\|f\|_{L^1(\mathbb R^d, \omega_\mathfrak K)}.
$$
Then, $g\in L^\infty (\mathbb R^d,\omega_\mathfrak{K})\cap L^1(\mathbb R^d,\omega_\mathfrak{K})$, and hence, $g\in L^q(\mathbb R^d,\omega_\mathfrak{K})$, for every $1\leq q\leq \infty$. It follows that $b\in L^1(\mathbb R^d,\omega_\mathfrak{K})\cap L^2(\mathbb R^d,\omega_\mathfrak{K})$.

We get
\begin{align*}
\omega_\mathfrak{K}(\{x\in\mathbb R^d: V_\sigma(\{T_{t,m}^\mathfrak K\}_{t>0})(f)(x)>\lambda \})&\leq \omega_\mathfrak{K}\big(\{x\in \mathbb R^d:V_\sigma(\{T_{t,m}^\mathfrak K\}_{t>0})(g)(x)>\frac{\lambda }{2}\}\big)\\
&\quad +\omega_\mathfrak{K}\big(\{x\in \mathbb R^d: V_\sigma(\{T_{t,m}^\mathfrak K\}_{t>0})(b)(x)>\frac{\lambda }{2}\}\big).
\end{align*}

By considering (ii) and that $V_\sigma(\{T_{t,m}^\mathfrak K\}_{t>0})$ is bounded on $L^2(\mathbb R^d,\omega_\mathfrak K )$ we obtain
\begin{align*}
    \omega_\mathfrak{K}\big(\{x\in \mathbb R^d: V_\sigma(\{T_{t,m}^\mathfrak K\}_{t>0})(g)(x)>\frac{\lambda }{2}\}\big)&\leq \frac{4}{\lambda ^2}\int_{\mathbb R^d}|V_\sigma(\{T_{t,m}^\mathfrak K\}_{t>0})(g)(x)|^2d\omega_\mathfrak{K}(x)\\
    &\hspace{-2cm}\leq \frac{C}{\lambda ^2}\|g\|_{L^2(\mathbb R^d, \omega_\mathfrak K)}^2\leq \frac{C}{\lambda }\|g\|_{L^1(\mathbb R^d,\omega_\mathfrak K)}\leq \frac{C}{\lambda }\|f\|_{L^1(\mathbb R^d,\omega_\mathfrak K)}.
\end{align*}
Our aim is then to establish that
\begin{equation}\label{aim}
     \omega_\mathfrak{K}\big(\{x\in \mathbb R^d: V_\sigma(\{T_{t,m}^\mathfrak K\}_{t>0})(b)(x)>\frac{\lambda }{2}\}\big)\leq \frac{C}{\lambda }\|f\|_{L^1(\mathbb R^d,\omega_\mathfrak K)}.
\end{equation}
Consider $s_i=r_i^2$, $i\in \mathbb N$. Since, for every $s>0$, $T_s^\mathfrak K$ is contractive in $L^1(\mathbb R^d,\omega_\mathfrak{K})$ it follows that
$$
\sum_{i\in \mathbb N}\|T_{s_i}^\mathfrak K (b_i)\|_{L^1(\mathbb R^d, \omega_\mathfrak K)}\leq \sum_{i\in \mathbb N}\|b_i\|_{L^1(\mathbb R^d, \omega_\mathfrak K)}\leq C\|f\|_{L^1(\mathbb R^d, \omega_\mathfrak K)},
$$
and the series $\sum_{i\in \mathbb N}|T_{s_i}^\mathfrak K  (b_i)|$ and $\sum_{i\in \mathbb N}|(I-T_{s_i}^\mathfrak K )b_i|$ converge in $L^1(\mathbb R^d,\omega_\mathfrak{K})$. Thus we can write
$$
b=\sum_{i\in \mathbb N}T_{s_i}^\mathfrak K   (b_i)+\sum_{i\in \mathbb N}(I-T_{s_i}^\mathfrak K   )b_i,
$$
and
$$
T_{t,m}^\mathfrak K (b)=T_{t,m}^\mathfrak K \Big(\sum_{i\in \mathbb N} T_{s_i}^\mathfrak K   (b_i)\Big)+T_{t,m}^\mathfrak K \Big(\sum_{i\in \mathbb N}(I-T_{s_i}^\mathfrak K   )b_i\Big),\quad t>0.
$$
Then,
$$
V_\sigma(\{T_{t,m}^\mathfrak K\}_{t>0})(b)\leq V_\sigma(\{T_{t,m}^\mathfrak K\}_{t>0})\Big(\sum_{i\in \mathbb N}T_{s_i}^\mathfrak K   (b_i)\Big)+V_\sigma (\{T_{t,m}^\mathfrak K\}_{t>0})\Big(\sum_{i\in \mathbb N}(I-T_{s_i}^\mathfrak K   )b_i\Big),
$$
and
\begin{align}\label{aim1}
    \omega_\mathfrak{K}\Big(\{x\in \mathbb R^d: V_\sigma(\{T_{t,m}^\mathfrak K\}_{t>0})(b)(x)>\frac{\lambda }{2}\}\Big)&\nonumber\\
    &\hspace{-4cm}\leq \omega_\mathfrak{K}\Big(\{x\in \mathbb R^d: V_\sigma(\{T_{t,m}^\mathfrak K\}_{t>0})\Big(\sum_{i\in \mathbb N}T_{s_i}^\mathfrak K(b_i)\Big)(x)>\frac{\lambda }{4}\}\Big)\nonumber\\
    &\hspace{-4cm}\quad +\omega_\mathfrak{K}\Big(\{x\in \mathbb R^d: V_\sigma(\{T_{t,m}^\mathfrak K\}_{t>0})\Big(\sum_{i\in \mathbb N} (I-T_{s_i}^\mathfrak K)b_i\Big)(x)>\frac{\lambda }{4}\}\Big).
\end{align}

Suppose that $m,n\in \mathbb{N}$, $n<m$. By proceeding as in \cite[pp. 11-12]{Li} we obtain that
$$
\Big\|\sum_{i=n}^mT_{s_i}^\mathfrak K (b_i)\Big\|_{L^2(\mathbb R^d, \omega_\mathfrak K)} \leq \Big\|\sum_{i=n}^mT_{s_i}^\mathfrak K (|b_i|)\Big\|_{L^2(\mathbb R^d, \omega_\mathfrak K)}\leq C\lambda \big(\sum_{i=n}^m\omega_\mathfrak{K}(B_i)\big)^{1/2}.
$$
It follows that the series $\sum_{i\in \mathbb N} T_{s_i}^\mathfrak K (b_i)$ converges in $L^2(\mathbb R ^d,\omega_\mathfrak{K})$ and
$$
\Big\|\sum_{i\in \mathbb N} T_{s_i}^\mathfrak K (b_i)\Big\|_{L^2(\mathbb R^d, \omega_\mathfrak K)} \leq C\lambda \Big(\sum_{i\in \mathbb N} \omega_\mathfrak{K}(B_i)\Big)^{1/2}\leq C\sqrt{\lambda \|f\|_{L^1(\mathbb R^d, \omega_\mathfrak K)} }. 
$$
Since $V_\sigma(\{T_{t,m}^\mathfrak K\}_{t>0})$ is bounded on $L^2(\mathbb R^d,\omega_\mathfrak{K})$ we deduce that
\begin{equation}\label{aim2}
\omega_\mathfrak{K}\Big(\{x\in \mathbb R^d: V_\sigma(\{T_{t,m}^\mathfrak K\}_{t>0})\Big(\sum_{i\in \mathbb N}T_{s_i}^\mathfrak K (b_i)\Big)(x)>\frac{\lambda }{4}\}\Big)\leq \frac{C}{\lambda}\|f\|_{L^1(\mathbb R^d, \omega_\mathfrak K)} .
\end{equation}

On the other hand, 
$$
V_\sigma(\{T_{t,m}^\mathfrak K\}_{t>0})\Big(\sum_{i\in \mathbb N} (I-T_{s_i}^\mathfrak K)b_i\Big)(x)\leq \sum_{i\in \mathbb N} V_\sigma(\{T_{t,m}^\mathfrak K\}_{t>0})((I-T_{s_i}^\mathfrak K )b_i)(x),\quad  a.e. x\in \mathbb R^d.
$$

Indeed, let $0<t_n<...<t_2<t_1$, $t_j\in \mathbb Q$, $j=1,...,n$, $n\in \mathbb N$. Since $T_{t,m}^\mathfrak K$, $t>0$, is bounded on $L^1(\mathbb R^d,\omega_\mathfrak{K})$ we obtain
$$
T_{t,m}^\mathfrak K  \Big(\sum_{i\in \mathbb N}(I-T_{s_i}^\mathfrak K   )b_i\Big)_{|t=t_j}=\sum_{i\in \mathbb N} T_{t,m}^\mathfrak K  ((I-T_{s_i}^\mathfrak K   )b_i)_{|t=t_j},\quad j=1,...,n,
$$
in $L^1(\mathbb R^d,\omega_\mathfrak{K})$.

We get, for almost every $x\in \mathbb R^d$,
\begin{align*}
\Big(\sum_{j=1}^{n-1}\Big|T_{t,m}^\mathfrak K  \big(\sum_{i\in \mathbb N}(I-T_{s_i}^\mathfrak K   )b_i\big)(x)_{|t=t_j}-T_{t,m}^\mathfrak K  \big(\sum_{i\in \mathbb N}(I-T_{s_i}^\mathfrak K   )b_i\big)(x)_{|t=t_{j+1}}\Big|^\sigma \Big)^{1/\sigma }&\\
&\hspace{-10cm}=\Big(\sum_{j=1}^{n-1}\Big|\big(\sum_{i\in \mathbb N}T_{t,m}^\mathfrak K  (I-T_{s_i}^\mathfrak K )b_i\big)(x)_{|t=t_j}-\big(\sum_{i\in \mathbb N}T_{t,m}^\mathfrak K  (I-T_{s_i}^\mathfrak K )b_i\big)(x)_{|t=t_{j+1}}\Big|^\sigma \Big)^{1/\sigma }\\
&\hspace{-10cm}=\Big(\sum_{j=1}^{n-1}\Big|\Big[\sum_{i\in \mathbb N}\big(T_{t,m}^\mathfrak K  ((I-T_{s_i}^\mathfrak K )b_i)_{|t=t_j}-T_{t,m}^\mathfrak K  ((I-T_{s_i}^\mathfrak K )b_i)_{|t=t_{j+1}}\big)\Big](x)\Big|^\sigma \Big)^{1/\sigma }\\
&\hspace{-10cm}\leq \Big(\sum_{j=1}^{n-1}\Big(\Big[\sum_{i\in \mathbb N}\Big|T_{t,m}^\mathfrak K  ((I-T_{s_i}^\mathfrak K )b_i)_{|t=t_j}-T_{t,m}^\mathfrak K  ((I-T_{s_i}^\mathfrak K )b_i)_{|t=t_{j+1}}\Big|\Big](x)\Big)^\sigma \Big)^{1/\sigma }\\
&\hspace{-10cm}\leq \sum_{i\in \mathbb N}\Big(\sum_{j=1}^{n-1}\big|T_{t,m}^\mathfrak K  ((I-T_{t_i}^\mathfrak K )b_i)_{|t=t_j}(x)-T_{t,m}^\mathfrak K  ((I-T_{s_i}^\mathfrak K )b_i)_{|t=t_{j+1}}(x)\big|^\sigma \Big)^{1/\sigma }.
\end{align*}
It is remarkable that, for $j=1,...,n$, the series 
$$
\sum_{i\in \mathbb N}\big|T_{t,m}^\mathfrak K  ((I-T_{s_i}^\mathfrak K )b_i)_{|t=t_j}-T_{t,m}^\mathfrak K  ((I-T_{s_i}^\mathfrak K )b_i)_{|t=t_{j+1}}\big|,
$$
converges in $L^1(\mathbb R^d,\omega_\mathfrak{K})$, and then, for $j=1,...,n$,
\begin{align*}
    \Big[\sum_{i\in \mathbb N}\Big|T_{t,m} ^\mathfrak K  ((I-T_{s_i}^\mathfrak K )b_i)_{|t =t _j}-T_{t,m} ^\mathfrak K ((I-T_{s_i}^\mathfrak K )b_i)_{|t =t _{j+1}}\Big|\Big](x)\\
    &\hspace{-7cm}=\sum_{i\in \mathbb N} \big|T_{t,m} ^\mathfrak K  ((I-T_{s_i}^\mathfrak K )b_i)_{|t =t _j}(x)-T_{t,m} ^\mathfrak K  ((I-T_{s_i}^\mathfrak K )b_i)_{|t =t _{j+1}}(x)\big|,\quad \mbox{ a.e. }x\in \mathbb R^d.
\end{align*}
It can be concluded that
\begin{align*}
    V_\sigma(\{T_{t,m}^\mathfrak K\}_{t>0})\Big(\sum_{i\in \mathbb N} (I-T_{s_i}^\mathfrak K )b_i\Big)(x)\\
    &\hspace{-4.5cm}=\sup_{\substack{0<t _n<...<t_1\\t_j\in \mathbb Q,j=1,...,n}}\Big(\sum_{j=1}^{n-1}\big|T_{t,m}^\mathfrak K  \Big(\sum_{i\in \mathbb N} (I-T_{s_i}^\mathfrak K )b_i\Big)_{|t =t _j}(x)-T_{t,m}^\mathfrak K  \Big(\sum_{i\in \mathbb N} (I-T_{s_i}^\mathfrak K )b_i\Big)_{|t =t _{j+1}}(x)\big|^\sigma \Big)^{1/\sigma }\\
    &\hspace{-4.5cm} \leq \sum_{i\in \mathbb N} V_\sigma(\{T_{t,m}^\mathfrak K\}_{t>0})((I-T_{s_i}^\mathfrak K )b_i)(x),\quad \mbox{ a.e. }x\in \mathbb R^d,
\end{align*}
and, thus, it follows that
\begin{align*}
\omega_\mathfrak{K}\Big(\big\{x\in \mathbb R^d: V_\sigma(\{T_{t,m}^\mathfrak K\}_{t>0})\Big(\sum_{i\in \mathbb N} (I-T_{s_i}^\mathfrak K)b_i\Big)(x)>\frac{\lambda }{4}\big\}\Big)\\
&\hspace{-4cm}\leq \omega_\mathfrak{K}\Big(\big\{x\in \mathbb R^d: \sum_{i\in \mathbb N}V_\sigma(\{T_{t,m}^\mathfrak K\}_{t>0})\big((I-T_{s_i}^\mathfrak K )b_i\big)(x)>\frac{\lambda }{4}\big\}\Big).
\end{align*}

We are going to see that
\begin{equation}\label{aim3}
\omega_\mathfrak{K}\Big(\big\{x\in \mathbb R^d: \sum_{i\in \mathbb N} V_\sigma(\{T_{t,m}^\mathfrak K\}_{t>0})((I-T_{s_i}^\mathfrak K )b_i)(x)>\frac{\lambda }{4}\big\}\Big)\leq \frac{C}{\lambda}\|f\|_{L^1(\mathbb R^d, \omega_\mathfrak K)},
\end{equation}
which, jointly \eqref{aim1} and \eqref{aim2}, leads to \eqref{aim}.

We have that
\begin{align*}
    \omega_\mathfrak{K}\Big(\big\{x\in \mathbb R^d: \sum_{i\in \mathbb N} V_\sigma(\{T_{t,m}^\mathfrak K\}_{t>0})((I-T_{s_i}^\mathfrak K )b_i)(x)>\frac{\lambda }{4}\big\}\Big)&\leq \omega_\mathfrak{K}\big(\bigcup_{i\in \mathbb N} \theta (2B_i^*)\big)\\
    &\hspace{-7cm}+\omega_\mathfrak{K}\Big(\big\{x\in \bigcap_{i\in \mathbb N} (\theta (2B_i^*))^c:\sum_{i\in \mathbb N} V_\sigma(\{T_{t,m}^\mathfrak K\}_{t>0})((I-T_{s_i}^\mathfrak K )b_i)(x)>\frac{\lambda }{4}\big\}\Big).
\end{align*}
From \eqref{1.1} and \eqref{1.2} we get
$$
\omega_\mathfrak{K}\big(\bigcup_{i\in \mathbb N} \theta (2B_i^*)\big)\leq C\sum_{i\in \mathbb N} \omega_\mathfrak{K}(B_i)\leq \frac{C}{\lambda }\|f\|_{L^1(\mathbb R^d, \omega_\mathfrak K)} .
$$
On the other hand, observe that, for $F\in L^1(\mathbb R^d,\omega_\mathfrak K)$ defined on $\mathbb R^d$, we can write
\begin{align}\label{Vderivt}
    V_\sigma(\{T_{t,m}^\mathfrak K\}_{t>0})(F)(x)&\leq \int_0^\infty |\partial _t[T_{t,m}^\mathfrak KF(x)]|dt\nonumber\\
    &\leq \int_0^\infty (m|T_{t,m}^\mathfrak KF(x)|+|T_{t,m+1}^\mathfrak KF(x)|)\frac{dt}{t},\quad x\in \mathbb R^d.
\end{align}
Then,
\begin{align*}
   & \omega_\mathfrak{K}\Big(\big\{x\in \bigcap_{i\in \mathbb N} (\theta (2B_i^*))^c:\sum_{i\in \mathbb N} V_\sigma(\{T_{t,m}^\mathfrak K\}_{t>0})((I-T_{s_i}^\mathfrak K )b_i)(x)>\frac{\lambda }{4}\big\}\Big)\\
    &\leq \frac{4}{\lambda }\sum_{i\in \mathbb N} \int_{(\theta (2B_i^*))^c}V_\sigma(\{T_{t,m}^\mathfrak K\}_{t>0})((I-T_{s_i}^\mathfrak K )b_i)(x)d\omega_\mathfrak{K}(x)\\
    &\leq \frac{C}{\lambda }\sum_{i\in \mathbb N} \int_{(\theta (2B_i^*))^c}\int_0^\infty (|T_{t,m}^\mathfrak K  ((I-T_{s_i}^\mathfrak K )b_i)(x)|+|T_{t,m+1}^\mathfrak K  ((I-T_{s_i}^\mathfrak K )b_i)(x)|)\frac{dt}{t}d\omega_\mathfrak{K}(x)\\
    &\leq \frac{C}{\lambda }\sum_{i\in \mathbb N} \int_{B_i^*}|b_i(y)|(J_i^m(y)+J_i^{m+1}(y))d\omega_\mathfrak K(y),
\end{align*}
where, for every $i\in \mathbb N$, $J_i^0=0$ and for $r\in \mathbb N$, $r\geq 1$,
$$
J_i^r(y)=\int_{(\theta (2B_i^*))^c}\int_0^\infty t^{r-1}|\partial _t^r(T_t^\mathfrak K  (x,y)-T_{t+s_i}^\mathfrak K(x,y))|dtd\omega_\mathfrak{K}(x),\quad y\in B_i^*.
$$
We are going to see that, for $r\in \mathbb N$, $r\geq 1$, $\sup_{i\in \mathbb N}\sup_{y\in B_i^*}J_i^r(y)<\infty$.

Let $i\in \mathbb N$ and $r\in \mathbb N$, $r\geq 1$. We write $J_i^r=\sum_{\ell \in \mathbb N}\mathbb J_{i,\ell}^r$, where, for every $\ell \in \mathbb N$, 
$$
\mathbb{J}_{i ,\ell}^r(y)=\int_{(\theta (2B_i^*))^{\rm c}}\int_{\ell s_i}^{(\ell +1)s_i}t^{r-1}|\partial _t^r (T_t^\mathfrak K  (x,y)-T_{t+s_i}^\mathfrak K(x,y))|dtd\omega_\mathfrak{K}(x),\quad y\in B_i^*.
$$
Since $\partial _u^rT_u^\mathfrak K (x,y)=\Delta _{\mathfrak K,x}^rT_u^\mathfrak K(x,y)$, $x,y\in \mathbb R ^d$, $u>0$, we get
\begin{align*}
    \mathbb J_{i,\ell}^r(y)&=\int_{(\theta (2B_i^*))^{\rm c}}\int_{\ell s_i}^{(\ell+1)s_i}t^{r-1}\Big|\int_t^{t+s_i}\Delta_{\mathfrak K,x}^{r+1}T_u^\mathfrak K  (x,y)du\Big|dtd\omega_\mathfrak{K}(x)\\
    &\leq \int_{\ell s_i}^{(\ell +1)s_i}t^{r-1}\int_t^{t+s_i}\int_{(\theta (2B_i^*))^{\rm c}}|\Delta_{\mathfrak K,x}  ^{r+1}T_u^\mathfrak K (x,y)|d\omega_\mathfrak{K}(x)dudt,\quad y\in B_i^*.
\end{align*}
For every $y\in B_i^*$, $x\in \mathbb R^d$ and $g\in G$, we have that $|gx-x_i|\leq |gx-y|+|y-x_i|$. Then, $\rho(x,x_i)\leq \rho(x,y)+\alpha r_i$ and $(\theta (2B_i^*))^{\rm c}\subset (\theta (B(y,\alpha r_i)))^{\rm c}$, $x\in \mathbb R^d$, $y\in B_i^*$.

By using now \cite[Lemmas 2.3 and 2.6]{Li} we obtain, for small enough $\varepsilon>0$, $y\in B_i^*$ and $u\in (\ell s_i,(\ell+1)s_i)$,
\begin{align}\label{cLi}
    \int_{(\theta (2B_i^*))^{\rm c}}|\Delta_\mathfrak K  ^{r+1}T_u^\mathfrak K  (x,y)|d\omega_\mathfrak{K}(x)&\nonumber\\
  &\hspace{-4cm}\leq \left(\int_{(\theta (B(y,\alpha r_i)))^{\rm c}}|\Delta_\mathfrak K  ^{r+1}T_u^\mathfrak K  (x,y)|^2e^{\varepsilon \frac{\rho(x,y)^2}{u}}d\omega_\mathfrak{K}(x)\right)^{1/2}\left( \int_{(\theta (B(y,\alpha r_i)))^{\rm c}}e^{-\varepsilon \frac{\rho(x,y)^2}{u}}d\omega_\mathfrak{K}(x)\right)^{1/2}\nonumber\\
    &\hspace{-4cm}\leq C\Big(\frac{e^{-\alpha^2\varepsilon \frac{r_i^2}{u}}}{u^{2r+2}\omega_\mathfrak{K}(B(y,\sqrt{u}))}\Big)^{1/2}\Big(\omega_\mathfrak{K}(B(y,\sqrt{u}))e^{-\alpha ^2\varepsilon \frac{r_i^2}{2u}}\Big)^{1/2}=C\frac{e^{-c\frac{s_i}{u}}}{u^{r+1}}.
\end{align}
We get, if $\ell \geq 1$,
$$
    \mathbb J_{i,\ell}^r(y)\leq C\int_{\ell s_i}^{(\ell+1)s_i}t^{r-1}\int_t^{t+s_i}\frac{e^{-c\frac{s_i}{u}}}{u^{r+1}}dudt\leq Cs_i\int_{\ell s_i}^{(\ell+1)s_i}\frac{dt}{t^2}\leq \frac{C}{\ell^2},\quad y\in B_i^*.
$$
On the other hand, we can write
\begin{align*}
    \mathbb J_{i,0}^r(y)&\leq C\int_0^{s_i}t^{r-1}\int_t^{t+s_i}\Big(\frac{u}{s_i}\Big)^{3/2}\frac{du}{u^{r+1}}dt\leq \frac{C}{s_i^{3/2}}\int_0^{s_i}t^{r-1}\int_t^{t+s_i}\frac{du}{u^{r-1/2}}dt\\
    &\leq \frac{C}{\sqrt{s_i}}\int_0^{s_i}\frac{dt}{\sqrt{t}}=C,\quad y\in B_i^*.
\end{align*}

We conclude that $J_i^r(y)=\sum_{\ell\in \mathbb N} \mathbb J_{i,\ell}^r (y)\leq C$, $y\in B_i^*$, where $C>0$ does not depend on $i$.

We obtain 
\begin{align*}
\omega_\mathfrak{K}\Big(\big\{x\in \bigcap_{i\in \mathbb N} (\theta (2B_i^*))^{\rm c}:\sum_{i\in \mathbb N} V_\sigma (\{T_{t,\ell }^\mathfrak K\}_{t>0})((I-T_{s_i}^\mathfrak K )b_i)(x)>\frac{\lambda }{4}\big\}\Big)&\\
&\hspace{-6cm} \leq \frac{C}{\lambda }\sum_{i\in \mathbb N} \int_{B_i^*}|b_i(y)|d\omega_\mathfrak{K}(y)\leq \frac{C}{\lambda}\|f\|_{L^1(\mathbb R^d, \omega_\mathfrak K)},
\end{align*}
and \eqref{aim3} is thus established.

\subsection{Variation operators on $H^1(\Delta _\mathfrak K)$}
We prove first that there exists $C>0$ such that, for every $f\in H^1(\Delta _\mathfrak K)$,
$$
\|V_\sigma (\{T_{t,m}^\mathfrak K\}_{t>0})(f)\|_{L^1(\mathbb R^d, \omega_\mathfrak K)} \leq C\|f\|_{H^1(\Delta _\mathfrak K)} .
$$
It is sufficient to see that there exists $C>0$ such that, for every $(1,2,\Delta_\mathfrak K,1)$-atom $a$,
$$
\|V_\sigma (\{T_{t,m}^\mathfrak K\}_{t>0})(a)\|_{L^1(\mathbb R^d, \omega_\mathfrak K)} \leq C.
$$
Let $a$ be a $(1,2,\Delta_\mathfrak K ,1)$-atom. There exist $b\in D(\Delta_\mathfrak K )$ and a ball $B=B(x_B,r_B)$ satisfying that
\begin{itemize}
    \item[(i)] $a=\Delta_\mathfrak K  b$;
    \item[(ii)] ${\rm supp} (\Delta_\mathfrak K ^\ell  b)\subset \theta(B)$, $\ell =0,1$;
    \item[(iii)] $\|(r_B^2\Delta_\mathfrak K )^\ell b\|_{L^2(\mathbb R^d,\omega_\mathfrak K)}\leq r_B^2\omega_\mathfrak{K}(B)^{-1/2}$, $\ell =0,1$.
\end{itemize}
We decompose $\|V_\sigma (\{T_{t,m}^\mathfrak K\}_{t>0})(a)\|_{L^1(\mathbb R^d, \omega_\mathfrak K)} $ as follows
$$
\|V_\sigma (\{T_{t,m}^\mathfrak K\}_{t>0})(a)\|_{L^1(\mathbb R^d, \omega_\mathfrak K)} =\left(\int_{\theta(4B)}+\int_{(\theta(4B))^{\rm c}}\right)V_\sigma (\{T_{t,m}^\mathfrak K\}_{t>0})(a)(x)d\omega_\mathfrak{K}(x)=I_1(a)+I_2(a).
$$
Since $V_\sigma (\{T_{t,m}^\mathfrak K\}_{t>0})$ is bounded on $L^2(\mathbb R^d,\omega_\mathfrak{K})$, using \eqref{1.1}, \eqref{1.2} and (iii) for $\ell =1$ we obtain
$$
I_1(a)\leq \|V_\sigma (\{T_{t,m}^\mathfrak K\}_{t>0})(a)\|_{L^2(\mathbb R^d,\omega _\mathfrak K)}\omega_\mathfrak{K}(\theta(4B))^{1/2}\leq C\|a\|_{L^2(\mathbb R^d,\omega_\mathfrak{K})}\omega_\mathfrak K(B)^{1/2}\leq C.
$$

On the other hand, by taking into account \eqref{Vderivt} we have that
$V_\sigma (\{T_{t,m}^\mathfrak K\}_{t>0})(a)(x)\leq C(J_m(x)+J_{m+1}(x))$, $x\in \mathbb R^d$, where $J_0=0$ and for every $r\in \mathbb N$, $r\geq 1$,
$$
J_r(x)=\int_0^\infty |T_{t,r}^\mathfrak K (a)(x)|\frac{dt}{t},\quad x\in \mathbb R^d.
$$
Let $r\in \mathbb N$, $r\geq 1$. Since $a=\Delta_\mathfrak K  b$, it follows that $|T_{t,r}^\mathfrak K (a)|=|t^r\partial_t^{r+1}T_t^\mathfrak K (b)|=|t^r\Delta_\mathfrak K^{r+1}b|$. Then, as in \eqref{cLi}, according to \cite[Lemmas 2.3 and 2.6]{Li} we obtain
\begin{align*}
    \int_{(\theta (4B))^{\rm c}}J_r(x)d\omega_\mathfrak{K}(x)&\leq \int_0^\infty t^{r-1}\int_{(\theta(4B))^{\rm c}}|\Delta_\mathfrak K^{r+1}T_t^\mathfrak K (b)(x)|d\omega_\mathfrak{K}(x)dt\\
    &\leq C\int_{\theta (B)}|b(y)|\int_0^\infty \frac{e^{-c\frac{r_B^2}{t}}}{t^2}dtd\omega_\mathfrak{K}(y)\leq \frac{C}{r_B^2}\|b\|_{L^2(\mathbb R^d,\omega_\mathfrak K)}\omega_\mathfrak{K}(B)^{1/2}\leq C.
\end{align*}
It follows that $I_2(a)\leq C$ and it can be concluded that 
$
\|V_\sigma (\{T_{t,m}^\mathfrak K\}_{t>0})(a)\|_{L^1(\mathbb R^d, \omega_\mathfrak K)} \leq C,
$
where $C>0$ does not depend on $a$.

Let us see now that $\|f\|_{L^1(\mathbb R^d, \omega_\mathfrak K)} +\|T_{*,0}^\mathfrak K  (f)\|_{L^1(\mathbb R^d, \omega_\mathfrak K)}$ and $\|f\|_{L^1(\mathbb R^d, \omega_\mathfrak K)} +\|V_\sigma (\{T_t^\mathfrak K\}_{t>0})(f)\|_{L^1(\mathbb R^d, \omega_\mathfrak K)}$ are equivalent. It is clear that
$$
T_{*,0}^\mathfrak K (f) \leq V_\sigma (\{T_t^\mathfrak K \}_{t>0})(f)+|T_1^\mathfrak K (f)|.
$$
By using \cite[Theorem 2.2]{ADH} and that $T_1^\mathfrak K  $ is bounded from $L^1(\mathbb R^d,\omega_\mathfrak{K})$ into itself, we deduce that if $f\in L^1(\mathbb R^d,\omega_\mathfrak{K})$ and $V_\sigma (\{T_t^\mathfrak K \}_{t>0})(f)\in L^1(\mathbb R^d,\omega_\mathfrak{K})$, then $f\in H^1(\Delta_\mathfrak K  )$. Moreover, as it has just been proved, $V_\sigma (\{T_t^\mathfrak K \}_{t>0})$ is bounded from $H^1(\Delta_\mathfrak K )$ into $L^1(\mathbb R^d,\omega_\mathfrak{K})$, then we can conclude that if $f\in L^1(\mathbb R^d,\omega_\mathfrak{K})$, then $f\in H^1(\Delta_\mathfrak K)$ if and only if $V_\sigma (\{T_t\}_{t>0})(f)\in L^1(\mathbb R^d,\omega_\mathfrak{K})$. 

We also have that
$$
T_{*,0}^\mathfrak K (f)\leq V_\sigma (\{T_t^\mathfrak K \}_{t>0})(f)+|T_\varepsilon ^\mathfrak K  f|,\quad \varepsilon >0.
$$
By taking into account that, for every $f\in L^1(\mathbb R^d,\omega_\mathfrak{K})$, $\lim_{\varepsilon \rightarrow 0^+}T_\varepsilon ^\mathfrak K  (f)(x)=f(x)$, for almost all $x\in \mathbb R^d$, there exists $C>0$ for which
\begin{align*}
\frac{1}{C}\big(\|f\|_{L^1(\mathbb R^d, \omega_\mathfrak K)} +\|T_{*,0}^\mathfrak K  (f)\|_{L^1(\mathbb R^d, \omega_\mathfrak K)}\big) &\leq \|f\|_{L^1(\mathbb R^d, \omega_\mathfrak K)} +\|V_\sigma (\{T_t^\mathfrak K\}_{t>0})(f)\|_{L^1(\mathbb R^d, \omega_\mathfrak K)}\\
&\leq C\big(\|f\|_{L^1(\mathbb R^d, \omega_\mathfrak K)} +\|T_{*,0}^\mathfrak K  (f)\|_{L^1(\mathbb R^d, \omega_\mathfrak K)} \big),
\end{align*}
for every $f\in H^1(\Delta_\mathfrak K)$.

\subsection{Variation operators on ${\rm BMO}^\rho (\mathbb R^d,\omega_\mathfrak{K})$}\label{S2.3}

Suppose that $f\in {\rm BMO}^\rho (\mathbb R^d,\omega_\mathfrak{K})$ satisfies that $V_\sigma (\{T_{t,m}^\mathfrak K\}_{t>0})(f)(x)<\infty$, for almost all $x\in \mathbb R^d$.

Our first objective is to establish that $V_\sigma (\{T_{t,m}^\mathfrak K\}_{t>0})(f)\in {\rm BMO}(\mathbb R^d,\omega_\mathfrak{K})$.

Let $x_0\in \mathbb R^d$. The function $\phi (t)=T_t^\mathfrak K (f)(x_0)$, $t\in (0,\infty )$, is smooth and, for every $\ell\in \mathbb N$,
$$
\phi ^{(\ell)}(t)=\int_{\mathbb R^d}\partial _t^\ell T_t^\mathfrak K (x_0,y)f(y)d\omega_\mathfrak{K}(y),\quad t\in (0,\infty ).
$$
Indeed, let $t>0$ and $B_t=B(x_0,\sqrt{t})$. According to \eqref{1.1}, \eqref{1.2}, \eqref{1.4} and \eqref{1.5} we obtain
\begin{align*}
    \int_{\mathbb R^d}|T_t^\mathfrak K (x_0,y)f(y)|d\omega_\mathfrak{K}(y)&\leq |f_{\theta (B_t)}|+\int_{\mathbb R^d}T_t^\mathfrak K (x_0,y)|f(y)-f_{\theta (B_t)}|d\omega_\mathfrak{K}(y)\\
    &\leq |f_{\theta (B_t)}|+\frac{C}{\omega_\mathfrak{K}(\theta (B_t))}\int_{\mathbb R^d}e^{-c\frac{\rho(x_0,y)^2}{t}}|f(y)-f_{\theta (B_t)}|d\omega_\mathfrak{K}(y),
\end{align*}
and we can write
\begin{align}\label{sumBMO} 
\int_{\mathbb R^d}e^{-c\frac{\rho(x_0,y)^2}{t}}|f(y)-f_{\theta (B_t)}|d\omega_\mathfrak{K}(y)&\nonumber\\
    &\hspace{-4.5cm}\leq \left(\int_{\theta (B_t)}+\sum_{k=1}^\infty \int_{\theta (2^kB_t)\setminus\theta (2^{k-1}B_t)}\right)e^{-c\frac{\rho(x_0,y)^2}{t}}|f(y)-f_{\theta (B_t)}|d\omega_\mathfrak{K}(y)\nonumber\\
    &\hspace{-4.5cm}\leq \sum_{k=0}^\infty e^{-c2^{2k}}\int_{\theta (2^kB_t)}|f(y)-f_{\theta (B_t)}|d\omega_\mathfrak{K}(y)\\
    &\hspace{-4.5cm}\leq \sum_{k=0}^\infty e^{-c2^{2k}}\omega_\mathfrak{K}(\theta (2^kB_t))\Big(\|f\|_{{\rm BMO}^\rho (\mathbb R^d,\omega_\mathfrak{K})}+|f_{\theta (2^kB_t)}-f_{\theta (B_t)}|\Big)\nonumber\\
    &\hspace{-4.5cm}\leq \sum_{k=0}^\infty e^{-c2^{2k}}\omega_\mathfrak{K}(\theta (2^kB_t))\Big(\|f\|_{{\rm BMO}^\rho (\mathbb R^d,\omega_\mathfrak{K})}+\frac{\omega_\mathfrak K(\theta (2^kB_t)}{\omega_\mathfrak K(\theta (B_t))}\|f\|_{{\rm BMO}^\rho (\mathbb R^d,\omega_\mathfrak{K})}\Big)\nonumber\\
    &\hspace{-4.5cm}\leq C\omega_\mathfrak K(\theta (B_t))\|f\|_{{\rm BMO}^\rho (\mathbb R^d,\omega_\mathfrak{K})}\sum_{k=0}^\infty (1+2^{2kD})e^{-c2^{2k}}.\nonumber
\end{align}
Then,
$$
\int_{\mathbb R^d}T_t^\mathfrak K (x_0,y)|f(y)|d\omega_\mathfrak{K}(y)<\infty,\quad t>0.
$$
In analogous way, by using again \eqref{1.5} we can see that, for every $\ell\in \mathbb N$,
$$
\partial _t^\ell \int_{\mathbb R^d}T_t^\mathfrak K (x_0,y)f(y)d\omega_\mathfrak{K}(y)=\int_{\mathbb R^d}\partial _t^\ell T_t^\mathfrak K (x_0,y)f(y)d\omega_\mathfrak{K}(y),\quad t>0,
$$
and the last integral is absolutely convergent.

Let now $x_0\in \mathbb R^d$ and $r_0>0$. We write $B=B(x_0,r_0)$ and decompose $f=f_1+f_2+f_3$, where $f_1=(f-f_{\theta (4B)})\mathcal X_{\theta (4B)}$, $f_2=(f-f_{\theta (4B)})\mathcal X_{(\theta (4B))^{\rm c}}$ and $f_3=f_{\theta (4B)}$. Since $T_t^\mathfrak K (f_3)(x)=f_3$, $x\in \mathbb R^d$ and $t>0$, it follows that $V_\sigma (\{T_{t,m}\}_{t>0})(f_3)=0$ and 
$$
V_\sigma (\{T_{t,m}^\mathfrak K\}_{t>0})(f_2)\leq V_\sigma (\{T_{t,m}^\mathfrak K\}_{t>0})(f)+V_\sigma (\{T_{t,m}^\mathfrak K\}_{t>0})(f_1).
$$
By using \eqref{1.1}, \eqref{1.2} and that $V_\sigma (\{T_{t,m}^\mathfrak K\}_{t>0})$ is bounded on $L^2(\mathbb R^d,\omega_\mathfrak{K})$ from \cite[Proposition 7.3]{JiLi3} we obtain 
\begin{equation}\label{Vsigmaf1}
\|V_\sigma (\{T_{t,m}^\mathfrak K\}_{t>0})(f_1)\|_{L^2(\mathbb R^d,\omega_\mathfrak{K})}^2\leq C\int_{\theta (4B)}|f(y)-f_{\theta (4B)}|^2d\omega_\mathfrak{K}(y)\leq C\omega_\mathfrak{K}(B )\|f\|_{{\rm BMO}^\rho (\mathbb R^d,\omega_\mathfrak{K})}^2.
\end{equation}
Hence, $V_\sigma (\{T_{t,m}^\mathfrak K\}_{t>0})(f_1)(x)<\infty$, for almost all $x\in \mathbb R^d$. Since $V_\sigma (\{T_{t,m}^\mathfrak K\}_{t>0})(f)(x)<\infty$, for almost all $x\in \mathbb R^d$, we have that $V_\sigma (\{T_{t,m}^\mathfrak K\}_{t>0})(f_2)(x)<\infty$, for almost all $x\in \mathbb R^d$. We choose $x_1\in B$ such that $V_\sigma (\{T_{t,m}^\mathfrak K\}_{t>0})(f_2)(x_1)<\infty$.

We can write
\begin{align*}
    \frac{1}{\omega_\mathfrak{K}(B)}\int_B|V_\sigma (\{T_{t,m}^\mathfrak K\}_{t>0})(f)(x)-V_\sigma (\{T_{t,m}^\mathfrak K\}_{t>0})(f_2)(x_1)|d\omega_\mathfrak{K}(x)\\
    &\hspace{-8cm}\leq \frac{1}{\omega_\mathfrak{K}(B)}\int_B V_\sigma (\{T_{t,m}^\mathfrak K (f)(x)-T_{t,m}^\mathfrak K (f_2)(x_1)\}_{t>0})d\omega_\mathfrak{K}(x)\\
     &\hspace{-8cm}\leq \frac{1}{\omega_\mathfrak{K}(B)}\int_B V_\sigma (\{T_{t,m}^\mathfrak K\}_{t>0})(f_1)(x)d\omega_\mathfrak{K}(x)\\
     &\hspace{-8cm}\quad +\frac{1}{\omega_\mathfrak{K}(B)}\int_B V_\sigma (\{T_{t,m}^\mathfrak K (f_2)(x)-T_{t,m}^\mathfrak K (f_2)(x_1)\}_{t>0})d\omega_\mathfrak{K}(x).
\end{align*}
By applying H\"older inequality and considering \eqref{Vsigmaf1} we get 
$$
    \frac{1}{\omega_\mathfrak{K}(B)}\int_B V_\sigma (\{T_{t,m}^\mathfrak K\}_{t>0})(f_1)(x)d\omega_\mathfrak{K}(x)\leq C\|f\|_{{\rm BMO}^\rho (\mathbb R^d,\omega_\mathfrak{K})}.
$$
On the other hand, by taking into account \eqref{Vderivt} it can be seen that
$$
V_\sigma (\{T_{t,m}^\mathfrak K (f_2)(x)-T_{t,m}^\mathfrak K (f_2)(x_1)\}_{t>0})
\leq C\int_{(\theta (4B))^{\rm c}}|f_2(y)|(J_m(x,y)+J_{m+1}(x,y))d\omega_\mathfrak{K}(y),\quad x\in \mathbb R^d,
$$
where $J_0=0$ and for $r\in \mathbb N$, $r\geq 1$,
$$
J_r(x,y)=\int_0^\infty |t^{r-1}\partial _t^r(T_t^\mathfrak K (x,y)-T_t^\mathfrak K (x_1,y))|dt,\quad x\in B \mbox{ and }y\in (\theta(4B))^{\rm c}.
$$
Consider $r\in \mathbb N$, $r\geq 1$. We claim that
\begin{equation}\label{Jr}
J_r(x,y)\leq C\frac{r_0}{\rho (x_0,y)\omega_\mathfrak K(\theta (B(x_0,\rho (x_0,y))))},\quad x\in B,\;y\in (\theta (4B))^{\rm c}.
\end{equation}

Observe first that by virtue of \eqref{1.1} and \eqref{1.5} it follows that
\begin{equation}\label{c1}
|t^r\partial _t^rT_t^\mathfrak K (u,v)|\leq C\frac{e^{-c\frac{\rho(u,v)^2}{t}}}{V(u,v,\sqrt{t})}\leq C\frac{e^{-c\frac{\rho(u,v)^2}{t}}}{V(u,v,\rho (u,v))},\quad u,v\in \mathbb R^d,\;t>0,
\end{equation}
and in analogous way, by considering \eqref{1.6}, for $t>0$ and $u,v,w\in \mathbb R^d$, $|v-w|<\sqrt{t}$,
\begin{equation}\label{c2}
|t^r\partial _t^r[T_t^\mathfrak K (u,v)-T_t^\mathfrak K (u,w)]|\leq C\frac{|v-w|}{\sqrt{t}}\frac{e^{-c\frac{\rho(u,v)^2}{t}}}{V(u,v,\rho (u,v))}.
\end{equation}

Let $x\in B$ and $y\in (\theta (4B))^{\rm c}$. We decompose $J_r(x,y)$ in the following way,
$$
J_r(x,y)=\left(\int_0^{|x-x_1|^2}+\int_{|x-x_1|^2}^\infty \right)|t^{r-1}\partial _t^r(T_t^\mathfrak K (x,y)-T_t^\mathfrak K (x_1,y))|dt=J_{r,1}(x,y)+J_{r,2}(x,y).
$$
By \eqref{c1} we obtain
\begin{align}\label{Jr1}
    J_{r,1}(x,y)&\leq C\int_0^{|x-x_1|^2}\Big(\frac{e^{-c\frac{\rho(x,y)^2}{t}}}{\omega_\mathfrak{K}(B(x,\rho(x,y)))}+\frac{e^{-c\frac{\rho(x_1,y)^2}{t}}}{\omega_\mathfrak{K}(B(x_1,\rho (x_1,y)))}\Big)\frac{dt}{t}\nonumber\\
    &\leq C\left(\frac{1}{\rho(x,y)\omega_\mathfrak{K}(B(x,\rho(x,y)))}+\frac{1}{\rho(x_1,y)\omega_\mathfrak{K}(B(x_1,\rho(x_1,y)))}\right)\int_0^{|x-x_1|^2}\frac{dt}{\sqrt{t}\nonumber}\\
    &\leq C|x-x_1|\left(\frac{1}{\rho(x,y)\omega_\mathfrak{K}(B(x,\rho(x,y)))}+\frac{1}{\rho(x_1,y)\omega_\mathfrak{K}(B(x_1,\rho(x_1,y)))}\right)\nonumber\\
    &\leq C\frac{r_0}{\rho(x,y)\omega_\mathfrak{K}(B(x,\rho(x,y)))}.
\end{align}
In the last inequality we have taken into account that $\rho (x,y)\sim \rho (x_1,y)$, specifically, 
$$
\frac{1}{3}\rho (x,y)\leq \rho (x_1,y)\leq \frac{5}{3}\rho (x,y).
$$
Note that $\rho(x,y)\geq \rho(y,x_0)-\rho(x,x_0)\geq 3r_0$ and that $\rho (x,x_1)\leq |x-x_1|<2r_0$. Then $\rho(x_1,y)\geq \rho(x,y)-\rho(x,x_1)\geq \rho(x,y)-2r_0\geq \rho(x,y)-\frac{2}{3}\rho(x,y)=\frac{1}{3}\rho(x,y)$. Furthermore, $\rho (x_1,y)\leq\rho (x_1,x)+ \rho (x,y)\leq 2r_0+\rho (x,y)\leq \frac{5}{3}\rho (x,y)$. Observe also that $B(x,\rho (x,y))\subseteq B(x_1,5\rho(x_1,y))$ and thus, $\omega_\mathfrak K(B(x,\rho (x,y)))\leq C\omega_\mathfrak K(B(x_1,\rho(x_1,y)))$.

On the other hand, according to \eqref{c2} we get
\begin{align}\label{Jr2}
    J_{r,2}(x,y)&\leq C\frac{|x-x_1|}{\omega_\mathfrak{K}(B(x,\rho(x,y)))}\int_0^\infty \frac{e^{-c\frac{\rho(x,y)^2}{t}}}{t^{3/2}}dt\leq C\frac{r_0}{\rho(x,y)\omega_\mathfrak{K}(B(x,\rho(x,y)))}.
\end{align}

Observe now that $\rho(x_0,y)-r_0\leq \rho(x,y)\leq \rho (x_0,y)+r_0$. Then, $\frac{3}{4}\rho(x_0,y)\leq \rho (x,y)\leq \frac{5}{4}\rho (x_0,y)$. In addition, if $z\in B(x_0,\rho(x_0,y))$ then $|z-x|\leq \rho(x_0,y)+r_0\leq \frac{5}{4}\rho(x_0,y)\leq \frac{5}{3}\rho(x,y)$, that is, $z\in B(x, \frac{5}{3}\rho (x,y))$.

Thus, from \eqref{Jr1} and \eqref{Jr2} and considering \eqref{1.1} and \eqref{1.2} we conclude \eqref{Jr}.

By using \eqref{Jr} it follows that
\begin{align*}
\int_{(\theta (4B))^{\rm c}}|f_2(y)|J_r(x,y)d\omega_\mathfrak{K}(y)&\leq Cr_0\int_{(\theta (4B))^{\rm c}}\frac{|f_2(y)|}{\rho(x_0,y)\omega_\mathfrak{K}(\theta (B(x_0,\rho(x_0,y))))}d\omega_\mathfrak{K}(y)\\
&\leq Cr_0\sum_{k=2}^\infty \int_{\theta (2^{k+1}B)\setminus \theta(2^kB)}\frac{|f_2(y)|}{\rho(x_0,y)\omega_\mathfrak{K}(\theta(B (x_0,\rho(x_0,y))))}d\omega_\mathfrak{K}(y)\\
&\leq Cr_0\sum_{k=2}^\infty \frac{1}{2^kr_0\omega_\mathfrak{K}(\theta (2^kB))}\int_{\theta(2^{k+1}B)}|f_2(y)|d\omega_\mathfrak{K}(y)
\end{align*}
We now observe that, for every $k\in \mathbb N$, $k\geq 2$, 
\begin{align*}    \int_{\theta(2^{k+1}B)}|f_2(y)|d\omega_\mathfrak{K}(y)&\leq \omega_\mathfrak K(\theta (2^{k+1}B))\Big(\|f\|_{{\rm BMO}^\rho (\mathbb R^d,\omega_\mathfrak K)}+\sum_{r=2}^k|f_{\theta (2^{r+1}B)}-f_{\theta (2^rB)}|\Big)\\
    &\leq Ck\omega_\mathfrak K(\theta (2^{k+1}B))\|f\|_{{\rm BMO}^\rho (\mathbb R^d,\omega_\mathfrak K)},
\end{align*}
where $C$ does not depend on $k$. Thus, for every $x\in B$,
\begin{align}\label{f2Jr}
  \int_{(\theta (4B))^{\rm c}}|f_2(y)|J_r(x,y)d\omega_\mathfrak{K}(y)   &
   \leq C\|f\|_{{\rm BMO}^\rho (\mathbb R^d,\omega_\mathfrak{K})}\sum_{k=2}^\infty \frac{k}{2^k}\leq C\|f\|_{{\rm BMO}^\rho (\mathbb R^d,\omega_\mathfrak{K})}.
\end{align}
We get
$$
\frac{1}{\omega_\mathfrak{K}(B)}\int_{B}V_\sigma (\{T_{t,m}^\mathfrak K (f_2)(x)- T_{t,m}^\mathfrak K (f_2)(x_1)\}_{t>0})d\omega_\mathfrak{K}(x)\leq C\|f\|_{{\rm BMO}^\rho (\mathbb R^d,\omega_\mathfrak{K})}.
$$
By putting together the above estimates we conclude that $V_\sigma (\{T_{t,m}^\mathfrak K\}_{t>0})(f)\in {\rm BMO}(\mathbb R^d,\omega_\mathfrak{K})$ and
$$
\|V_\sigma (\{T_{t,m}^\mathfrak K\}_{t>0})(f)\|_{{\rm BMO}(\mathbb R^d,\omega_\mathfrak{K})}\leq C\|f\|_{{\rm BMO}^\rho(\mathbb R^d,\omega_\mathfrak{K})}.
$$

Our next aim is to show that $V_\sigma(\{T_{t,m}^\mathfrak K\}_{t>0})$ is bounded from ${\rm BMO}^\rho(\mathbb R^d,\omega_\mathfrak{K})$ into ${\rm BLO}(\mathbb R^d,\omega _\mathfrak K)$.

Suppose that $f \in {\rm BMO}^\rho (\mathbb R^d,\omega_\mathfrak{K})$ such that $V_\sigma(\{T_{t,m}^\mathfrak K\}_{t>0})(f)(x) <\infty$, for every $x\in \mathbb R^d \setminus A$, where $A$ has $\omega_\mathfrak{K}$-measure (equivalently, Lebesgue measure) zero. Our objective is to show that there exists $C>0$ such that, for every Euclidean ball $B$,
$$
\int_B(V_\sigma (\{T_{t,m}^\mathfrak K\}_{t>0})(f)(x)-\essinf_{y\in B}V_\sigma (\{T_{t,m}^\mathfrak K\}_{t>0})(f)(y))d\omega_\mathfrak K (x)\leq C\omega_\mathfrak K(B)\|f\|_{{\rm BMO}^\rho(\mathbb R^d,\omega_\mathfrak K)}.
$$
Note that ${\rm essinf}_{y \in B} V_\sigma(\{T_{t,m}^\mathfrak K\}_{t>0})(f)(y) < \infty$ because $V_\sigma(\{T_{t,m}^\mathfrak K\}_{t>0})(f)(x) <\infty$ for almost all $x\in \mathbb R^d$.

We are going to see that, for every $x\in \mathbb R^d$, the function $t\longrightarrow T_{t,m}^\mathfrak K (f)(x)$ is continuous in $(0,\infty)$. According to \cite[Proposition 7.3]{JiLi3} we have that, for every $x\in \mathbb R^d$ and $\delta >0$,
\begin{equation}\label{A1}
\int_{\mathbb R^d}\frac{|f(y)-f_{\theta (B(x,\delta))}|}{(\delta +\rho (x,y))\omega_\mathfrak K(B(x,\delta +\rho (x,y)))}d\omega _\mathfrak K(y)\leq \frac{C}{\delta}\|f\|_{{\rm BMO} ^\rho (\mathbb R^d, \omega _\mathfrak K)}.
\end{equation}
Let $x\in \mathbb R^d$. By using \eqref{1.1} we get
\begin{align}\label{A2}
\int_{\mathbb R^d}\frac{d\omega _\mathfrak K(y)}{(\delta +\rho (x,y))\omega_\mathfrak K(B(x,\delta +\rho (x,y)))}&\nonumber\\
&\hspace{-4cm}=\Big(\int_{B(x,1)}+\sum_{k=1}^\infty \int_{B(x,2^k)\setminus B(x,2^{k-1})}\Big)\frac{d\omega_\mathfrak K(y)}{(\delta +\rho (x,y))\omega _\mathfrak K(B(x,\delta +\rho (x,y)))}\nonumber\\
&\hspace{-4cm}\leq C\sum_{k=0}^\infty \frac{1}{\delta +2^k}\frac{\omega _\mathfrak K(B(x,2^k))}{\omega _\mathfrak K(B(x,\delta +2^{k-1}))}\leq C\sum_{k=0}^\infty \frac{1}{\delta +2^k}\Big(\frac{2^k}{\delta +2^k}\Big)^d\leq C\sum_{k=0}^\infty \frac{1}{2^k}\leq C,\quad \delta >0.
\end{align}

Let $0<a<b<\infty$. By \eqref{A1} and \eqref{A2}, there exists $h\in L^1(\mathbb R^d,\omega _\mathfrak K)$ such that
$$
\frac{|f(y)|}{(\delta +\rho (x,y))\omega_\mathfrak K(B(x,\delta +\rho (x,y)))}\leq h(y),\quad y\in \mathbb R^d\mbox{ and }\delta \in [a,b].
$$
From \eqref{1.1} and \eqref{1.5} we deduce that
\begin{align*}
|t^m\partial _t^mT_t(x,y)|&\leq C\frac{e^{-c\frac{\rho (x,y)^2}{t}}}{\omega_\mathfrak K(B(x,\sqrt{t}))}\leq \frac{C}{\omega_\mathfrak K(B(x,\sqrt{t}+\rho (x,y)))}\Big(\frac{\sqrt{t}+\rho (x,y)}{\sqrt{t}}\Big)^D\Big(\frac{\sqrt{t}}{\sqrt{t}+\rho (x,y)}\Big)^{D+1}\\
&\leq C\frac{\sqrt{t}}{(\sqrt{t}+\rho (x,y))\omega_\mathfrak K(B(x,\sqrt{t}+\rho (x,y)))},\quad x,y\in \mathbb R^d\mbox{ and }t>0.
\end{align*}
By using dominated convergence theorem we conclude that the function $t\longrightarrow T_{t,m}^\mathfrak K (f)(x)$ is continuous in $(0,\infty)$.

Let us fix $x_0 \in \mathbb R^d$ and $r_0 >0$, denote $B=B(x_0,r_0)$ and write the following decomposition $f= (f-f_{\theta (4B)}) \chi_{\theta (4B)}+ (f-f_{\theta (4B)})\chi_{(\theta (4B))^c}+ f_{\theta (4B)}=f_1+f_2+f_3$.

Let $\varepsilon >0$. For every $x\in B\setminus A$ there exist $n=n(x) \in \mathbb N$ and $\{t_j=t_j(x)\}_{j=1}^n \subset \mathbb Q$ such that $0<t_n<...<t_1$ and
\begin{eqnarray*}
V_\sigma (\{T_{t,m}^\mathfrak K\}_{t>0})(f)(x) < \Big(\sum_{j=1}^{n-1}|T_{t,m}^\mathfrak K (f)(x)_{|t=t_j} - T_{t,m}^\mathfrak K (f)(x)_{t=t_{j+1}}|^\sigma\Big)^{1/\sigma}+\varepsilon.
\end{eqnarray*}
Note that we can not assure that $\{t_j\}_{j=1}^n$ can be selected in a unique way. By proceeding as in \cite[p. 39]{BdL} we see that for every $x\in B$ we can choose $\{t_j(x)\}_{j=1}^{n(x)}$ such that the function $\mathbb V_{\sigma, n}(f)$ is measurable, where
$$
\mathbb V_{\sigma,n} (h)(x):=\Big(\sum_{j=1}^{n(x)-1} |T_{t,m} ^\mathfrak K (h)(x)_{|t=t_j(x)} -   T_{t,m}^\mathfrak K (h)(x)_{|t=t_{j+1}(x)}|^\sigma\Big)^{1/\sigma}.
$$
 Thus, for every $x\in B\setminus A$ we can write
$$
V_\sigma (\{T_{t,m}^\mathfrak K\}_{t>0})(f)(x)-\essinf_{z\in B}V_\sigma (\{T_{t,m}^\mathfrak K\}_{t>0})(f)(z)
\leq \mathbb V_{\sigma,n}(f)(x)-\essinf_{z\in B}V_\sigma (\{T_{t,m}^\mathfrak K\}_{t>0})(f)(z)+\varepsilon.
$$
We claim that
\begin{align}\label{claim}
\mathbb V_{\sigma,n} (f)(x)-\essinf_{z\in B}V_\sigma (\{T_{t,m}^\mathfrak K\}_{t>0})(f)(z)&\leq V_\sigma (\{T_{t,m}^\mathfrak K\}_{t>0})(f_1)(x)+\int_0^{4r_0^2}|\partial _tT_{t,m}^\mathfrak K(f_2)(x)|dt\nonumber\\
&\hspace{-3cm}+\sup_{u,z\in B}\int_{4r_0^2}^\infty|\partial _t[T_{t,m}^\mathfrak K (f)(u)-T_{t,m}^\mathfrak K (f)(z)]|dt,\quad x\in B\setminus A.
\end{align}

From this estimate we can proceed as follows to finish the proof. By taking in account that $V_\sigma(\{T_{t,m}^\mathfrak K\}_{t>0})$ is bounded on $L^2(\mathbb R^d,\omega_\mathfrak{K})$ and \cite[Proposition 7.3]{JiLi3} we get
$$
\int_{B} V_\sigma(\{T_{t,m}^\mathfrak K\}_{t>0})(f_1)(x) d\omega_\mathfrak{K}(x)\leq C\omega_\mathfrak{K}(B)^{1/2}\|f_1\|_{L^2(\mathbb R^d,\omega_\mathfrak K)} \leq C\omega_\mathfrak K (B)\|f\|_{{\rm {\rm BMO}}^\rho (\mathbb R^d,\omega_\mathfrak{K})}.
$$
On the other hand, note that $\partial _tT_{t,m}^\mathfrak K =t^{-1}(mT_{t,m}^\mathfrak K+T_{t,m+1}^\mathfrak K)$. Then,
\begin{align*}
 \int_0^{4r_0^2}|\partial _tT_{t,m}^\mathfrak K(f_2)(x)|dt&\leq \int_0^{4r_0^2} \big(m|T_{t,m}^\mathfrak K(f_2)(x)|+|T_{t,m+1}^\mathfrak K(f_2)(x)|\big)\frac{dt}{t},\quad x\in B.
\end{align*}
Let $r\in \mathbb N$, $r\geq 1$. By using \eqref{c1} and proceeding as in estimates \eqref{Jr1} and \eqref{f2Jr} we obtain
\begin{align*}
\int_0^{4r_0^2}|T_{t,r}^\mathfrak K(f_2)(x)|\frac{dt}{t}&\leq 
\int_{(\theta (4B))^{\rm c}} |f_2(y)|\int_0^{4r_0^2} \left|t^r\partial_t^rT_t^\mathfrak K (x,y)\right|\frac{dt}{t}d\omega_\mathfrak K(y)\\
&\leq C\int_{(\theta(4B))^{\rm c}}|f_2(y)|\int_0^{4r_0^2} \frac{e^{-c\frac{\rho(x,y)^2}{t}}}{\omega_\mathfrak{K}(B(x,\rho(x,y)))}\frac{dt}{t}d\omega_\mathfrak K(y)\\
&\leq C\|f\|_{{\rm BMO}^\rho(\mathbb R^d,\omega_\mathfrak K)},\quad x\in B,
\end{align*}
and, consequently,
$$
\int_{B}\int_0^{4r_0^2}|\partial _t(T_{t,m}^\mathfrak K(f_2)(x))|dtd\omega_\mathfrak{K}(x)\leq C\omega_\mathfrak K (B)\|f\|_{{\rm {\rm BMO}}^\rho (\mathbb R^d,\omega_\mathfrak{K})}.
$$
Consider now $u,z\in B$. As before, we can write 
$$
\int_{4r_0^2}^\infty |\partial _t[T_{t,m}^\mathfrak K (f)(u)-T_{t,m}^\mathfrak K (f)(z)]|dt\leq \int_{4r_0^2}^\infty \big[m|T_{t,m}^\mathfrak K (f)(u)-T_{t,m}^\mathfrak K (f)(z)|+|T_{t,m+1}^\mathfrak K (f)(u)-T_{t,m+1}^\mathfrak K (f)(z)|\big]\frac{dt}{t}.
$$
Let $r\in \mathbb N$, $r\geq 1$. From \eqref{1.4} we can write, for each $t>0$,
\begin{align*}
|T_{t,r}^\mathfrak K (f)(u)-T_{t,r}^\mathfrak K (f)(z)|&\leq \frac{C}{t}|T_{t,r}^\mathfrak K(f_1)(u)|+|T_{t,r}^\mathfrak K(f_1)(z)|+|T_{t,r}^\mathfrak K(f_2)(u)-T_{t,r}^\mathfrak K(f_2)(z)|.
\end{align*}
 By taking into account \eqref{c2} and arguing as in the estimates \eqref{Jr2} and \eqref{f2Jr} it follows that
\begin{equation}\label{Tf2}
 \int_{4r_0^2}^\infty |T_{t,r}^\mathfrak K(f_2)(u)-T_{t,r}^\mathfrak K(f_2)(z)|\frac{dt}{t}\leq C \|f\|_{{\rm BMO}^\rho (\mathbb R^d,\omega_\mathfrak{K})}.
\end{equation}
Also, from \eqref{1.1} and \eqref{1.5} we get
\begin{align}\label{Tf1}
\int_{4r_0^2}^\infty |T_{t,r}^\mathfrak K(f_1)(x)|\frac{dt}{t}& \leq  C\int_{\theta (4B)}|f(y)- f_{\theta (4B)}|\int_{4r_0^2}^\infty \frac{e^{-c\frac{\rho(x,y)^2}{t}}}{\omega_\mathfrak{K}(B(x,\sqrt{t}))} \frac{dt}{t}\,d\omega_\mathfrak{K}(y)\nonumber\\
&\leq \frac{C}{\omega_\mathfrak{K}(B(x,2r_0))}\int_{\theta (4B)}|f(y)- f_{\theta (4B)}|\int_{4r_0^2}^\infty \frac{\omega_\mathfrak{K}(B(x,2r_0))}{\omega_\mathfrak{K}(B(x,\sqrt{t}))} \frac{dt}{t}d\omega_\mathfrak{K}(y)\nonumber\\
&\leq C\frac{r_0^d}{\omega_\mathfrak{K}(B)}\int_{\theta (4B)}|f(y)- f_{\theta (4B)}|d\omega_\mathfrak{K}(y)\int_{4r_0^2}^\infty\frac{dt}{t^{1+\frac{d}{2}}} \nonumber\\
&\leq C \frac{1}{\omega_\mathfrak{K}(B)}\int_{\theta(4B)} |f(y)- f_{\theta (4B)}|d\omega_\mathfrak{K}(y) \leq C\|f\|_{{\rm BMO}^\rho (\mathbb R^d,\omega _\mathfrak K)},\quad x\in B.
\end{align}
Then, 
$$
\int_B\sup_{u,z\in B}\int_{4r_0^2}^\infty|\partial _t(T_{t,m}^\mathfrak K(f)(u)-T_{t,m}^\mathfrak K(f)(z))|dtd\omega_\mathfrak K(x)\leq C\omega_\mathfrak K(B)\|f\|_{{\rm BMO}^\rho(\mathbb R^d,\omega_\mathfrak K)}.
$$
By considering all these estimates and the claim \eqref{claim} we can conclude that
$$
\frac{1}{\omega_\mathfrak K(B)}\int_B\big(V_\sigma (\{T_{t,m}^\mathfrak K\}_{t>0})(f)(x)-\essinf_{z\in B}V_\sigma (\{T_{t,m}^\mathfrak K\}_{t>0})(f)(z)\big)d\omega_\mathfrak K(x)\leq C\|f\|_{{\rm BMO}^\rho(\mathbb R^d,\omega_\mathfrak K)}+\varepsilon.
$$
The arbitrariness of $\varepsilon$ leads to 
$$
\frac{1}{\omega_\mathfrak{K}(B)} \int_{B} (V_\sigma(\{T_{t,m}^\mathfrak K\}_{t>0})(f)(x) - \essinf_{z\in B} V_\sigma(\{T_{t,m}^\mathfrak K\}_{t>0})(f)(z))d\omega_\mathfrak{K}(x)\leq C\|f\|_{{\rm {\rm BMO}}^\rho (\mathbb R^d,\omega_\mathfrak{K})}.
$$
Then, $\|V_\sigma(\{T_{t,m}^\mathfrak K\}_{t>0})(f)\|_{{\rm BLO}(\mathbb R^d,\omega_\mathfrak{K})} \leq C\|f\|_{{\rm BMO}^\rho (\mathbb R^d,\omega_\mathfrak{K})}$.

In order to complete the proof we only must to verify that the claim \eqref{claim} is true. Define the following sets:  $B_1=\{x\in B\setminus A: t_1(x)<4r_0^2\}$, $B_2=\{x\in B\setminus A: 4r_0^2 \leq t_{n(x)}(x)\}$ and $B_3=\{ x\in B \setminus A: 4r_0^2\in (t_{n(x)}(x),t_1(x)]\}$. 

Suppose first that $x\in B_1$. Again by \eqref{1.4} we can write,
\begin{align}\label{B1}
\mathbb V_{\sigma,n} (f)(x)-\essinf_{z\in B}V_\sigma (\{T_{t,m}^\mathfrak K\}_{t>0})(f)(z)&\leq \mathbb V_{\sigma,n}(f_1)(x)+\mathbb V_{\sigma,n}(f_2)(x)\nonumber\\
&\hspace{-4cm}\leq V_\sigma(\{T_{t,m}^\mathfrak K\}_{t>0})(f_1)(x)+\Big(\sum_{j=1}^{n(x)-1}\Big|\int_{t_{j+1}(x)}^{t_j(x)}\partial _tT_{t,m}^\mathfrak K(f_2)(x)dt\Big|^\sigma \Big)^{1/\sigma}\nonumber\\
&\hspace{-4cm}\leq V_\sigma(\{T_{t,m}^\mathfrak K\}_{t>0})(f_1)(x)+\int_0^{4r_0^2}|\partial _tT_{t,m}^\mathfrak K(f_2)(x)|dt.
\end{align}

Consider now $x\in B_2$. In this case we have that
\begin{align}\label{B2}
\mathbb V_{\sigma,n} (f)(x)-\essinf_{z\in B}V_\sigma (\{T_{t,m}^\mathfrak K\}_{t>0})(f)(z)&\nonumber\\
&\hspace{-4.5cm}\leq \sup_{\substack{{u\in B_2}\\z \in B}}
\Big(\sum_{j=1}^{n(u)-1}\big|(T_{t,m}^\mathfrak K (f)(u)-T_{t,m}^\mathfrak K (f)(z))_{|t=t_j(u)}-(T_{t,m}^\mathfrak K (f)(u)-T_{t,m}^\mathfrak K (f)(z))_{|t=t_{j+1}(u)}\big|^\sigma\Big)^{\frac{1}{\sigma}}
\nonumber\\
&\hspace{-4.5cm}\leq \sup_{\substack{{u\in B_2}\\z \in B}}\int_{4r_0^2}^\infty |\partial _t[T_{t,m}^\mathfrak K(f)(u)-T_{t,m}^\mathfrak K(f)(z)]|dt\leq \sup_{u,z\in B}\int_{4r_0^2}^\infty |\partial _t[T_{t,m}^\mathfrak K (f)(u)-T_{t,m}^\mathfrak K (f)(z)]|dt.
\end{align}

Finally, for every $x\in B_3$ let us consider $j_0(x) \in \{2, \ldots,n(x)\}$ such that $t_{j_0(x)}(x) < 4r_0^2 \leq t_{j_0(x)-1}(x)$. Fix $x\in B_3$. It follows that
\begin{align*}
    \mathbb V_{\sigma,n}(f)(x)&\leq \Big(\sum_{j=1}^{j_0(x)-2}|T_{t,m}^\mathfrak K (f)(x)_{|t=t_j(x)}-T_{t,m}^\mathfrak K (f)(x)_{|t=t_{j+1}(x)}|^\sigma \\
    &\qquad +|T_{t,m}^\mathfrak K (f)(x)_{|t=t_{j_0(x)-1}(x)}-T_{t,m}^\mathfrak K ()f(x)_{|t=4r_0^2}|^\sigma\Big)^{1/\sigma}\\
    &\quad +\Big(\sum_{j=j_0(x)}^{n(x)-1}|T_{t,m}^\mathfrak K (f)(x)_{|t=t_j(x)}-T_{t,m}^\mathfrak K (f)(x)_{|t=t_{j+1}(x)}|^\sigma\\
    &\qquad +|T_{t,m}^\mathfrak K (f)(x)_{|t=4r_0^2}-T_{t,m}^\mathfrak K (f)(x)_{|t=t_{j_0}(x)}|^\sigma\Big)^{1/\sigma}=:\mathbb V_{\sigma,n}^1(f)(x)+\mathbb V_{\sigma,n}^2(f)(x).
\end{align*}
Observe that
\begin{align*}
\mathbb V_{\sigma,n}^1 (f)(x)-\essinf_{z\in B}V_\sigma (\{T_{t,m}^\mathfrak K\}_{t>0})(f)(z)&\\
&\hspace{-4.5cm}\leq \sup_{\substack{{u\in B_3}\\z \in B}}
\Big(\sum_{j=1}^{j_0(u)-2}\big|(T_{t,m}^\mathfrak K (f)(u)-T_{t,m}^\mathfrak K (f)(z))_{|t=t_j(u)}-(T_{t,m}^\mathfrak K (f)(u)-T_{t,m}^\mathfrak K (f)(z))_{|t=t_{j+1}(u)}\big|^\sigma\\
&\hspace{-3cm} +\big|(T_{t,m}^\mathfrak K (f)(u)-T_{t,m}^\mathfrak K (f)(z))_{|t=t_{j_0(u)-1}(u)}-(T_{t,m}^\mathfrak K (f)(u)-T_{t,m}^\mathfrak K (f)(z))_{|t=4r_0^2}\big|^\sigma\Big)^{1/\sigma}\\
&\hspace{-4.5cm}\leq \sup_{\substack{{u\in B_3}\\z \in B}}\int_{4r_0^2}^\infty  \big|\partial_t[T_{t,m}^\mathfrak K (f)(u) - T_{t,m}^\mathfrak K (f)(z)]\big|dt
\end{align*}

On the other hand,
$$
\mathbb V_{\sigma,n}^2(f)(x)\leq \mathbb V_{\sigma,n}^2(f_1)(x)+\mathbb V_{\sigma,n}^2(f_2)(x)\leq V_\sigma (\{T_{t,m}^\mathfrak K\}_{t>0})(f_1)(x)+\int_0^{4r_0^2}|\partial _tT_{t,m}^\mathfrak K(f_2)(x)|dt.
$$
We conclude that 
\begin{align}\label{B3}
\mathbb V_{\sigma,n} (f)(x)-\essinf_{z\in B}V_\sigma (\{T_{t,m}^\mathfrak K\}_{t>0})(f)(z)&\leq V_\sigma (\{T_{t,m}^\mathfrak K\}_{t>0})(f_1)(x)+\int_0^{4r_0^2}|\partial _t(T_{t,m}^\mathfrak K(f_2)(x))|dt\nonumber\\
&\hspace{-3cm}+\sup_{u,z\in B}\int_{4r_0^2}^\infty|\partial _t[T_{t,m}^\mathfrak K (f)(u)-T_{t,m}^\mathfrak K (f)(z)]|dt,\quad x\in B_3.
\end{align}
Estimates \eqref{B1}, \eqref{B2} and \eqref{B3} lead to \eqref{claim} and the proof is finished.

\section{Proof of Theorem \ref{Th1.2}}
By proceeding as in the proofs of \cite[Theorems 1.1 and 1.2]{Li} we can see that $g_m$ is bounded from $L^1(\mathbb R^d,\omega_\mathfrak{K})$ into $L^{1,\infty}(\mathbb R^d,\omega_\mathfrak{K})$. Let us establish the boundedness on $H^1(\Delta _\mathfrak K)$.
\begin{prop}
Let $m\in \mathbb N$, $m\geq 1$. The Littlewood-Paley function $g_m$ defines a bounded operator from $H^1(\Delta_\mathfrak K )$ into $L^1(\mathbb R^d,\omega_\mathfrak{K})$.
\end{prop}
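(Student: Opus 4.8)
The plan is to reduce everything to atoms and then split, as in the variation argument above, the integral of $g_m(a)$ near and far from the ball attached to the atom. Since $H^1(\Delta_\mathfrak K)$ admits an atomic decomposition in terms of $(1,2,\Delta_\mathfrak K,1)$-atoms (see \cite[Theorem 1.5]{DH1}) and $g_m$ is subadditive — being the composition of the linear map $f\mapsto(T_{t,m}^\mathfrak K f)_{t>0}$ with the norm of $L^2((0,\infty),dt/t)$, while $T_{t,m}^\mathfrak K$ is bounded on $L^1(\mathbb R^d,\omega_\mathfrak K)$ uniformly in $t>0$ by \eqref{1.5} — it is enough to prove that $\|g_m(a)\|_{L^1(\mathbb R^d,\omega_\mathfrak K)}\le C$ for every $(1,2,\Delta_\mathfrak K,1)$-atom $a$. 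Thus I would take $a=\Delta_\mathfrak K b$, $b\in D(\Delta_\mathfrak K)$, with $\mathrm{supp}(\Delta_\mathfrak K^\ell b)\subset\theta(B)$, $\ell=0,1$, $B=B(x_B,r_B)$, and $\|(r_B^2\Delta_\mathfrak K)^\ell b\|_{L^2(\mathbb R^d,\omega_\mathfrak K)}\le r_B^2\omega_\mathfrak K(B)^{-1/2}$, and write
$$\int_{\mathbb R^d}g_m(a)(x)\,d\omega_\mathfrak K(x)=\Big(\int_{\theta(4B)}+\int_{(\theta(4B))^{\rm c}}\Big)g_m(a)(x)\,d\omega_\mathfrak K(x)=:L_1(a)+L_2(a).$$

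For $L_1(a)$ I would use that, $\{T_t^\mathfrak K\}_{t>0}$ being a symmetric diffusion semigroup, $g_m$ is bounded on $L^2(\mathbb R^d,\omega_\mathfrak K)$ by \cite[Corollary 1, p. 120]{St}; then Hölder's inequality, \eqref{1.1}, \eqref{1.2} and the atom bound $\|a\|_{L^2(\mathbb R^d,\omega_\mathfrak K)}\le\omega_\mathfrak K(B)^{-1/2}$ give $L_1(a)\le\|g_m(a)\|_{L^2(\mathbb R^d,\omega_\mathfrak K)}\,\omega_\mathfrak K(\theta(4B))^{1/2}\le C$. For $L_2(a)$ the key identity is $T_{t,m}^\mathfrak K(\Delta_\mathfrak K b)=t^m\partial_t^{m+1}(T_t^\mathfrak K b)$, which follows from $\partial_tT_t^\mathfrak K=\Delta_\mathfrak K T_t^\mathfrak K$, so that $T_{t,m}^\mathfrak K(a)(x)=\int_{\theta(B)}t^m\partial_t^{m+1}T_t^\mathfrak K(x,y)\,b(y)\,d\omega_\mathfrak K(y)$ and, by Minkowski's integral inequality, $g_m(a)(x)\le\int_{\theta(B)}|b(y)|\,h(x,y)\,d\omega_\mathfrak K(y)$, where $h(x,y)=\big(\int_0^\infty|t^m\partial_t^{m+1}T_t^\mathfrak K(x,y)|^2\,dt/t\big)^{1/2}$. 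The estimate I would then establish is
$$h(x,y)\le\frac{C}{\rho(x,y)^2\,\omega_\mathfrak K(B(x,\rho(x,y)))},\qquad x\neq y,$$
obtained by inserting \eqref{1.5} with $m+1$ in place of $m$ — which provides the crucial extra factor $t^{-1}$ — splitting $\int_0^\infty(\cdot)\,dt/t$ at $t=\rho(x,y)^2$, using on the part $t\le\rho(x,y)^2$ the lower bound $\omega_\mathfrak K(B(x,\sqrt t))\ge C^{-1}(\sqrt t/\rho(x,y))^D\omega_\mathfrak K(B(x,\rho(x,y)))$ from \eqref{1.1} and the substitution $s=\rho(x,y)^2/t$. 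It is precisely here that the cancellation $a=\Delta_\mathfrak K b$ is indispensable: without the extra power of $t$, the large-$t$ part of the integral defining $h$ would diverge.

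To conclude, for $y\in\theta(B)$ and $x\in(\theta(4B))^{\rm c}$ one has $\rho(x,y)\sim\rho(x,x_B)\ge 3r_B$, and $\omega_\mathfrak K(B(x,\rho(x,y)))\sim\omega_\mathfrak K(\theta(2^kB))$ when $\rho(x,x_B)\sim 2^kr_B$ (using $G$-invariance of $\omega_\mathfrak K$, the doubling property and \eqref{1.2}); hence, decomposing $(\theta(4B))^{\rm c}=\bigcup_{k\ge2}\big(\theta(2^{k+1}B)\setminus\theta(2^kB)\big)$ and using \eqref{1.1} and \eqref{1.2},
$$\int_{(\theta(4B))^{\rm c}}h(x,y)\,d\omega_\mathfrak K(x)\le C\sum_{k\ge2}\frac{\omega_\mathfrak K(\theta(2^{k+1}B))}{(2^kr_B)^2\,\omega_\mathfrak K(\theta(2^kB))}\le\frac{C}{r_B^2}\sum_{k\ge2}4^{-k}\le\frac{C}{r_B^2},\qquad y\in\theta(B).$$
Then Hölder's inequality, the size bound $\|b\|_{L^2(\mathbb R^d,\omega_\mathfrak K)}\le r_B^2\omega_\mathfrak K(B)^{-1/2}$ and \eqref{1.2} give $L_2(a)\le Cr_B^{-2}\|b\|_{L^2(\mathbb R^d,\omega_\mathfrak K)}\,\omega_\mathfrak K(\theta(B))^{1/2}\le C$, and adding this to the bound for $L_1(a)$ finishes the proof. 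I expect the main difficulty to lie in the far-field step: one must squeeze from the $L^2((0,\infty),dt/t)$-norm the sharp power $\rho(x,y)^{-2}$, so that the dyadic sum over the $\rho$-annuli converges, and one must keep clean the passage between the Euclidean ball $B$ and its orbit $\theta(B)$ — exactly the interplay of the Euclidean and orbit metrics stressed in the Introduction.
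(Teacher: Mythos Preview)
Your proof is correct and follows essentially the same strategy as the paper's: reduce to a uniform $L^1$ bound on $(1,2,\Delta_\mathfrak K,1)$-atoms, handle $\theta(4B)$ via the $L^2$-boundedness of $g_m$, and on $(\theta(4B))^{\rm c}$ exploit $a=\Delta_\mathfrak K b$ together with Minkowski and the kernel estimate $\|t^m\partial_t^{m+1}T_t^\mathfrak K(x,y)\|_{L^2((0,\infty),dt/t)}\le C\rho(x,y)^{-2}\omega_\mathfrak K(B(x,\rho(x,y)))^{-1}$, then sum over dyadic $\rho$-annuli. The only cosmetic difference is that the paper reaches this kernel bound in one step via the pre-digested estimate \eqref{c1}, whereas you derive it by splitting the $t$-integral at $\rho(x,y)^2$ and invoking \eqref{1.1} on the small-$t$ part; the outcome and all remaining steps coincide.
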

\begin{proof}
It is sufficient to see that there exists $C>0$ such that
$$
\|g_m (a)\|_{L^1(\mathbb R^d, \omega_\mathfrak K)}  \leq C,
$$
for every $(1,2,\Delta_\mathfrak K ,1)$-atom $a$.

Suppose that $a$ is a $(1,2,\Delta_\mathfrak K ,1)$-atom. For a certain $b \in D(\Delta_\mathfrak K )$ and a ball $B=B(x_0,r_0)$ we have that
\begin{enumerate}
\item[({\it i})] $a= \Delta_\mathfrak K b$;
\item[({\it ii})] ${\rm supp }\,(\Delta_\mathfrak K ^\ell b)\subset \theta(B)$, $\ell=0,1$;
\item[({\it iii})] $\|(r^2_0\Delta_\mathfrak K )^\ell b\|_{L^2(\mathbb R^d,\omega_\mathfrak K)} \leq r^2_0\omega_\mathfrak{K}(B)^{-1/2}$, $\ell=0,1$.
\end{enumerate}
We can write
$$
\|g_m (a)\|_{L^1(\mathbb R^d, \omega_\mathfrak K)} =\left(\int_{\theta(4B)} + \int_{(\theta(4B))^c}\right)g_m (a)(x)d\omega_\mathfrak{K}(x)= I_1(a)+I_2(a).
$$
Since $g_m $ is bounded on $L^2(\mathbb R^d,\omega_\mathfrak{K})$ we get
$$
I_1(a) \leq \|g_m(a)\|_{L^2(\mathbb R^d,\omega_\mathfrak K)}\omega_\mathfrak{K}(\theta(4B))^{1/2}
\leq C\|a\|_{L^2(\mathbb R^d,\omega_\mathfrak K)} \omega_\mathfrak{K}(B)^{1/2} \leq C,
$$
where we have used \eqref{1.1}, \eqref{1.2} and $(iii)$ for $\ell=1$. Note that $C$ does not depend on $a$.

On the other hand, since $a=\Delta _\mathfrak Kb=\partial _tT_t^\mathfrak K(b)$, by Minkowski inequality we have that
\begin{align*}
g_m (a)(x) &=\big\|t^m\partial_t^{m+1}T_t^\mathfrak K (b)(x)\big\|_{L^2((0,\infty),\frac{dt}{t})}\\
&\leq \int_{\theta(B)}|b(y)|\big\|t^m\partial_t^{m+1}T_t^\mathfrak K (x,y)\big\|_{L^2((0,\infty),\frac{dt}{t})}d\omega_\mathfrak K(y),\quad x\in \mathbb R^d.
\end{align*}
Observe that if $x\in (\theta (4B))^{\rm c}$ and $y\in \theta (B)$ then $\frac{3}{4}\rho (x,x_0)\leq \rho (x,y)\leq \frac{5}{4}\rho (x_0,x)$, and moreover, according to \eqref{1.1} and \eqref{1.2} it follows that $\omega_\mathfrak K(B(y,\rho (x,y)))\sim \omega_\mathfrak K(B(x_0,\rho (x_0,x)))$. Thus, by using \eqref{c1} we obtain
\begin{align*}
\big\|t^m\partial_t^{m+1}T_t^\mathfrak K (x,y)\big\|_{L^2((0,\infty),\frac{dt}{t})}&\leq \frac{C}{\omega_\mathfrak K(B(y,\rho (x,y)))}\left(\int_0^\infty \frac{e^{-c\frac{\rho (x,y)^2}{t}}}{t^3}dt\right)^{1/2}\\
&\hspace{-3cm}\leq \frac{C}{\rho (x,y)^2\omega_\mathfrak K(B(y,\rho (x,y)))}\leq \frac{C}{\rho (x,x_0)^2\omega_\mathfrak K(B(x_0,\rho (x,x_0)))},\quad x\in (\theta (4B))^{\rm c},\,y\in \theta (B).
\end{align*}
Again from \eqref{1.1} and \eqref{1.2},
\begin{align*}
   \int_{(\theta (4B))^{\rm c}}\frac{d\omega_\mathfrak K(x)}{\rho (x,x_0)^2\omega_\mathfrak K(B(x_0,\rho (x,x_0)))}&\leq \sum_{k=2}^\infty \int_{\theta (2^{k+1}B)\setminus \theta (2^kB)}\frac{d\omega_\mathfrak K(x)}{\rho (x,x_0)^2\omega_\mathfrak K(B(x_0,\rho (x,x_0)))}\\
   &\leq \frac{C}{r_0^2}\sum_{k=2}^\infty \frac{\omega_\mathfrak K(\theta (2^{k+1}B))}{2^{2k}\omega_\mathfrak K(2^kB)}\leq \frac{C}{r_0^2}.
\end{align*}
We deduce that
$$
I_2(a)\leq \frac{C}{r_0^2}\int_{\theta (B)}|b(y)|d\omega _\mathfrak K(y)\leq \frac{C}{r_0^2}\|b\|_{L^2(\mathbb R^d,\omega _\mathfrak K)}\omega_\mathfrak K(\theta (B))^{1/2}\leq C,
$$
and conclude the proof.
\end{proof}
The behaviour of $g_m$ on BMO is given in the next result.
\begin{prop}
Let $m\in \mathbb N$, $m\geq 1$. Suppose that $f\in {\rm BMO}^\rho (\mathbb R^d,\omega_\mathfrak{K})$ verifies that $g_m (f)(x) < \infty$ for almost all $x \in \mathbb R^d$. Then, $g_m (f) \in {\rm BLO}(\mathbb R^d,\omega_\mathfrak{K})$ and
$$
\|g_m (f)\|_{{\rm BLO}(\mathbb R^d,\omega_\mathfrak{K})} \leq C \|f\|_{{\rm BMO}^\rho(\mathbb R^d,\omega_\mathfrak{K})},
$$
where $C>0$ does not depend on $f$.
\end{prop}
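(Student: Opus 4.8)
The plan is to run, in a streamlined form, the argument of Subsection \ref{S2.3}, the key simplification being that $g_m(f)(x)=\|T_{t,m}^\mathfrak K(f)(x)\|_{L^2((0,\infty),dt/t)}$ is genuinely a norm in the variable $t$, so that Minkowski's inequality in $L^2((0,\infty),dt/t)$ replaces all the combinatorial bookkeeping ($\mathbb V_{\sigma,n}$, the sets $B_1,B_2,B_3$, and so on) that was needed for $V_\sigma$. First I would note that, exactly as in Subsection \ref{S2.3} (the computation leading to \eqref{sumBMO}, together with the subsequent continuity and differentiation discussion), $T_{t,m}^\mathfrak K(f)(x)$ is well defined for every $x\in\mathbb R^d$ and $t>0$ when $f\in{\rm BMO}^\rho(\mathbb R^d,\omega_\mathfrak K)$ and $t\longmapsto T_{t,m}^\mathfrak K(f)(x)$ is continuous on $(0,\infty)$, so that $g_m(f)(x)$ makes sense.

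Fix an Euclidean ball $B=B(x_0,r_0)$ and decompose $f=f_1+f_2+f_3$ with $f_1=(f-f_{\theta(4B)})\mathcal X_{\theta(4B)}$, $f_2=(f-f_{\theta(4B)})\mathcal X_{(\theta(4B))^{\rm c}}$ and $f_3=f_{\theta(4B)}$. Since $m\geq 1$ and $T_t^\mathfrak K$ fixes constants by \eqref{1.4}, $T_{t,m}^\mathfrak K(f_3)=0$. Given $\varepsilon>0$, using $\essinf_{z\in B}g_m(f)(z)<\infty$ I would choose $z_\ast\in B$ with $g_m(f)(z_\ast)<\infty$ and $g_m(f)(z_\ast)<\essinf_{z\in B}g_m(f)(z)+\varepsilon$; splitting the time integral at $t=4r_0^2$ and applying Minkowski's inequality gives, for a.e. $x\in B$,
\begin{align*}
g_m(f)(x)-\essinf_{z\in B}g_m(f)(z)&\leq\Big(\int_0^{4r_0^2}|T_{t,m}^\mathfrak K(f)(x)|^2\tfrac{dt}{t}\Big)^{1/2}\\
&\quad+\sup_{u,z\in B}\Big(\int_{4r_0^2}^\infty|T_{t,m}^\mathfrak K(f)(u)-T_{t,m}^\mathfrak K(f)(z)|^2\tfrac{dt}{t}\Big)^{1/2}+\varepsilon=:L(x)+R+\varepsilon.
\end{align*}
So it suffices to bound $\frac{1}{\omega_\mathfrak K(B)}\int_B L\,d\omega_\mathfrak K$ and $R$ by $C\|f\|_{{\rm BMO}^\rho(\mathbb R^d,\omega_\mathfrak K)}$, integrate, divide by $\omega_\mathfrak K(B)$ and let $\varepsilon\to0$; this also gives $g_m(f)\in L^1_{\rm loc}(\mathbb R^d,\omega_\mathfrak K)$, since $\essinf_{z\in B}g_m(f)(z)<\infty$.

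For $L(x)$ I would split $f=f_1+f_2$ inside the norm. The $f_1$-contribution is $\leq g_m(f_1)(x)$, and by the $L^2(\mathbb R^d,\omega_\mathfrak K)$-boundedness of $g_m$ together with John--Nirenberg for $\rho$-balls (\cite[Proposition 7.3]{JiLi3}) one gets $\int_B g_m(f_1)\,d\omega_\mathfrak K\leq\omega_\mathfrak K(B)^{1/2}\|g_m(f_1)\|_{L^2(\mathbb R^d,\omega_\mathfrak K)}\leq C\omega_\mathfrak K(B)\|f\|_{{\rm BMO}^\rho(\mathbb R^d,\omega_\mathfrak K)}$, the analogue of \eqref{Vsigmaf1}. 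For the $f_2$-contribution, $x\in B$ and $y\in(\theta(4B))^{\rm c}$ force $\rho(x,y)\geq 3r_0$ and $\rho(x,y)\sim\rho(x_0,y)$, so by \eqref{c1}
$$
\Big(\int_0^{4r_0^2}|t^m\partial_t^mT_t^\mathfrak K(x,y)|^2\tfrac{dt}{t}\Big)^{1/2}\leq\frac{C}{\omega_\mathfrak K(B(x,\rho(x,y)))}\Big(\int_0^{4r_0^2}e^{-c\rho(x,y)^2/t}\tfrac{dt}{t}\Big)^{1/2},
$$
where the change of variables $s=\rho(x,y)^2/t$ shows the last factor decays faster than any power of $r_0/\rho(x,y)$; inserting this into $\int_{(\theta(4B))^{\rm c}}|f_2(y)|(\cdots)\,d\omega_\mathfrak K(y)$ and summing over the dyadic annuli $\theta(2^{k+1}B)\setminus\theta(2^kB)$ exactly as in \eqref{Jr1} and \eqref{f2Jr} (using \eqref{1.1}, \eqref{1.2} and $|f_{\theta(2^{k+1}B)}-f_{\theta(4B)}|\leq Ck\|f\|_{{\rm BMO}^\rho(\mathbb R^d,\omega_\mathfrak K)}$) yields a bound $C\|f\|_{{\rm BMO}^\rho(\mathbb R^d,\omega_\mathfrak K)}$, uniformly in $x\in B$.

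For $R$, again $T_{t,m}^\mathfrak K(f_3)$ contributes nothing, and I split into an $f_1$- and an $f_2$-part. For the $f_1$-part I would use $|T_{t,m}^\mathfrak K(f_1)(u)-T_{t,m}^\mathfrak K(f_1)(z)|\leq|T_{t,m}^\mathfrak K(f_1)(u)|+|T_{t,m}^\mathfrak K(f_1)(z)|$ and \eqref{1.5}, \eqref{1.1}: for $t\geq 4r_0^2$ one has $\omega_\mathfrak K(B(u,\sqrt t))\geq c(\sqrt t/r_0)^d\omega_\mathfrak K(B)$ (by \eqref{1.1} and doubling), whence $(\int_{4r_0^2}^\infty|t^m\partial_t^mT_t^\mathfrak K(u,y)|^2\frac{dt}{t})^{1/2}\leq C/\omega_\mathfrak K(B)$ and this part is $\leq\omega_\mathfrak K(B)^{-1}\int_{\theta(4B)}|f-f_{\theta(4B)}|\,d\omega_\mathfrak K\leq C\|f\|_{{\rm BMO}^\rho(\mathbb R^d,\omega_\mathfrak K)}$, as in \eqref{Tf1}. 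For the $f_2$-part, since $u,z\in B$ gives $|u-z|<2r_0\leq\sqrt t$ on $(4r_0^2,\infty)$, the symmetry $T_t^\mathfrak K(x,y)=T_t^\mathfrak K(y,x)$ lets me apply \eqref{1.6}, hence \eqref{c2}, in the first variable, getting
$$
\Big(\int_{4r_0^2}^\infty|t^m\partial_t^m(T_t^\mathfrak K(u,y)-T_t^\mathfrak K(z,y))|^2\tfrac{dt}{t}\Big)^{1/2}\leq\frac{C|u-z|}{\omega_\mathfrak K(B(u,\rho(u,y)))}\Big(\int_{4r_0^2}^\infty e^{-c\rho(u,y)^2/t}\tfrac{dt}{t^2}\Big)^{1/2}\leq\frac{Cr_0}{\rho(u,y)\,\omega_\mathfrak K(B(u,\rho(u,y)))},
$$
and the dyadic summation of \eqref{Jr2} and \eqref{Tf2} gives $R\leq C\|f\|_{{\rm BMO}^\rho(\mathbb R^d,\omega_\mathfrak K)}$. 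Collecting the estimates and letting $\varepsilon\to0$ gives $\frac{1}{\omega_\mathfrak K(B)}\int_B(g_m(f)-\essinf_B g_m(f))\,d\omega_\mathfrak K\leq C\|f\|_{{\rm BMO}^\rho(\mathbb R^d,\omega_\mathfrak K)}$, and the supremum over $B$ finishes the proof. I expect the only mildly delicate point to be extracting from the $t$-integrals over $(0,4r_0^2)$ and $(4r_0^2,\infty)$ enough decay in $\rho(x,y)/r_0$ to absorb the logarithmic ${\rm BMO}^\rho$-growth of the annular averages --- but this is precisely the mechanism already used in \eqref{Jr1}--\eqref{f2Jr} and \eqref{Tf1}--\eqref{Tf2}, so no genuinely new obstacle arises; the $L^2(dt/t)$ structure of $g_m$ makes this argument strictly simpler than the one for $V_\sigma$.
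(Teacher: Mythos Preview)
Your argument is correct and slightly streamlines the paper's. The paper does not work directly with $g_m(f)$: it first reduces to showing that $(g_m(f))^2\in{\rm BLO}(\mathbb R^d,\omega_\mathfrak K)$ with $\|(g_m(f))^2\|_{{\rm BLO}}\leq C\|f\|_{{\rm BMO}^\rho}^2$, using the elementary inequality $g_m(f)(x)-\essinf_B g_m(f)\leq\big((g_m(f)(x))^2-\essinf_B (g_m(f))^2\big)^{1/2}$ together with Jensen (a device from Meng--Yang \cite{MY}). After the same splitting $f=f_1+f_2+f_3$ and $t\in(0,4r_0^2)\cup(4r_0^2,\infty)$, the $g_{m,\infty}$-piece is handled through $|a|^2-|b|^2=(a-b)(a+b)$, which forces the paper to prove the additional pointwise bound $|T_{t,m}^\mathfrak K(f)(x)|\leq C\|f\|_{{\rm BMO}^\rho}$ for $x\in B$ and $t>4r_0^2$. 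Your approach bypasses this: by using the triangle and reverse triangle inequalities in $L^2((0,\infty),dt/t)$ and a near-infimum point $z_\ast$, you bound the $(4r_0^2,\infty)$-contribution directly by the $L^2(dt/t)$-norm of the difference $T_{t,m}^\mathfrak K(f)(u)-T_{t,m}^\mathfrak K(f)(z)$, so the extra pointwise estimate is never needed. The remaining kernel and annular computations are the same in both proofs; your route is marginally shorter, while the paper's squaring trick is a standard template that transfers verbatim to other square-function operators.
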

\begin{proof}
It is sufficient to see that $g_m (f)^2 \in {\rm BLO}(\mathbb R^d,\omega_\mathfrak{K})$ and
\begin{equation}\label{g3}
\|g_m (f)^2\|_{{\rm BLO}(\mathbb R^d,\omega_\mathfrak{K})} \leq C\|f\|^2_{{\rm BMO}^\rho (\mathbb R^d,\omega_\mathfrak{K})},
\end{equation}
where $C>0$ does not depend on $f$.
Indeed, let $B$ be an Euclidean ball. Since $g_m (f)(x) <\infty$ for almost all $x \in B$, $ \ef_{z\in B} g_m (f)(z) < \infty$. Then, as in \cite[p. 31]{MY}, we have that
$$
g_m (f)(x) - \ef_{z\in B} g_m (f)(z) \leq \Big((g_m (f)(x))^2-\ef_{z\in B} (g_m (f)(z))^2\Big)^{1/2},\quad  x\in B.
$$
By using Jensen's inequality we obtain
\begin{align*}
\frac{1}{\omega_\mathfrak{K}(B)}\int_B \Big(g_m (f)(x)-\ef_{z\in B} g_m (f)(z)\Big)d\omega_\mathfrak{K}(x)&\\
&\hspace{-5.5cm}\leq \frac{1}{\omega_\mathfrak{K}(B)}\int_B \Big((g_m (f)(x))^2-\ef_{z\in B} (g_m (f)(z))^2\Big)^{1/2}d\omega_\mathfrak{K}(x)\\
&\hspace{-5.5cm}\leq \left(\frac{1}{\omega_\mathfrak{K}(B)}\int_B \Big((g_m (f)(x))^2-\ef_{z\in B} (g_m (f)(z))^2\Big)d\omega_\mathfrak{K}(x)\right)^{\frac{1}{2}}.
\end{align*}
From (\ref{g3}) we deduce that $\|g_m (f)\|_{{\rm BLO}(\mathbb R^d,\omega_\mathfrak{K})} \leq C\|f\|_{{\rm BMO}^\rho (\mathbb R^d, \omega_\mathfrak{K})}$, with $C$ independent of $f$.

We now prove \eqref{g3}. Let $B=B(x_0,r_0)$ where $x_0 \in \mathbb R^d$ and $r_0>0$. We define 
$$
g_{m,0}(f)(x)=\big\|T_{t,m}^\mathfrak K (f)(x)\big\|_{L^2((0,4r_0^2),\frac{dt}{t})},\quad x\in \mathbb R^d,
$$
and
$$
g_{m,\infty}(f)(x)=\big\|T_{t,m}^\mathfrak K (f)(x)\big\|_{L^2((4r_0^2,\infty),\frac{dt}{t})},\quad x\in \mathbb R^d,
$$
and decompose $f$ as $f= (f-f_{\theta (4B)})\chi_{\theta (4B)} + (f - f_{\theta (4B)})\chi_{(\theta(4B))^{\rm c}}+ f_{\theta (4B)}=f_1+f_2+f_3$.

Property \eqref{1.4} leads to $T_{t,m}^\mathfrak K(f_3)=0$ (note that $m\geq 1$), so we have that
\begin{align}\label{e1}
(g_m (f)(x))^2-\ef_{z\in B}( g_m (f)(z))^2&\leq(g_{m,0}(f)(x))^2+(g_{m,\infty}(f)(x))^2 - \ef_{z\in B}(g_{m,\infty}(f)(z))^2\nonumber\\
&\hspace{-3cm}\leq (g_{m,0}(f_1)(x))^2+(g_{m,0}(f_2)(x))^2+(g_{m,\infty}(f)(x))^2 - \ef_{z\in B}(g_{m,\infty}(f)(z))^2,\quad x\in \mathbb R^d.
\end{align}
Since $g_m $ is bounded on $L^2(\mathbb R^d,\omega_\mathfrak{K})$, by using \eqref{1.1} and \eqref{1.2} we obtain
\begin{equation}\label{e2}
    \int_B(g_{m,0}(f_1)(x))^2d\omega_\mathfrak K(x) \leq C\int_{\theta (4B)} |f(x)-f_{\theta (4B)}|^2 d\omega_\mathfrak{K}(x)\leq C\omega_\mathfrak K(B)\|f\|^2_{{\rm BMO}^\rho (\mathbb R^d,\omega_\mathfrak{K})}.
\end{equation}
On the other hand, by considering \eqref{c1} and arguing as in the estimation \eqref{f2Jr} we get
\begin{align}\label{e3}
\int_B(g_{m,0}(f_2)(x))^2d\omega_\mathfrak K(x)&=\int_0^{4r_0^2}\int_B|T_{t,m}^\mathfrak K(f_2)(x)|^2d\omega_\mathfrak K(x)\frac{dt}{t}\nonumber\\
&\leq C\int_0^{4r_0^2}\int_B\Big(\int_{(\theta (4B))^{\rm c}}|f_2(y)|\frac{e^{-c\frac{\rho(x,y)^2}{t}}}{\omega_\mathfrak K(B(x,\rho (x,y)))}d\omega _\mathfrak K(y)\Big)^2d\omega _\mathfrak K(x)\frac{dt}{t}\nonumber\\
&\leq C\int_0^{4r_0^2}\int_B\Big(\int_{(\theta (4B))^{\rm c}}\frac{|f_2(y)|}{\rho (x,y)\omega_\mathfrak K(B(x,\rho (x,y)))}d\omega _\mathfrak K(y)\Big)^2d\omega _\mathfrak K(x)dt\nonumber\\
&=Cr_0^2\int_B\Big(\int_{(\theta (4B))^{\rm c}}\frac{|f_2(y)|}{\rho (x_0,y)\omega_\mathfrak K(B(x_0,\rho (x_0,y)))}d\omega _\mathfrak K(y)\Big)^2d\omega _\mathfrak K(x)\nonumber\\
&\leq C\omega_\mathfrak K(B)\|f\|_{{\rm BMO}^\rho (\mathbb R^d,\omega_\mathfrak K)}^2.
\end{align}
Now observe that
$$
\int_B \big[(g_{m,\infty}(f)(x))^2 - \ef_{z\in B}(g_{m,\infty}(f)(z))^2\big]d\omega_\mathfrak K(x)\leq \omega_\mathfrak K(B)\sup_{u,z\in B}|(g_{m,\infty}(f)(u))^2-(g_{m,\infty }(f)(z))^2|.
$$
We are going to see that $|(g_{m,\infty}(f)(u))^2-(g_{m,\infty }(f)(z))^2|\leq C\|f\|_{{\rm BMO}^\rho (\mathbb R^d,\omega_\mathfrak{K})}^2$, $u,z\in B$. Thus we obtain
\begin{equation}\label{e4}
\int_B \big[(g_{m,\infty}(f)(x))^2 - \ef_{z\in B}(g_{m,\infty}(f)(z))^2\big]d\omega_\mathfrak K(x)\leq C\omega_\mathfrak K(B)\|f\|_{{\rm BMO}^\rho (\mathbb R^d,\omega_\mathfrak K)}^2.
\end{equation}
Let $u,z\in B$. 
We have that
\begin{align*}
    |(g_{m,\infty}(f)(u))^2-(g_{m,\infty }(f)(z))^2|&=\left|\int_{4r_0^2}^\infty (|T_{t,m}^\mathfrak K (f)(u)|^2-|T_{t,m}^\mathfrak K (f)(z)|^2)\frac{dt}{t}\right|\\
    &\leq \int_{4r_0^2}^\infty |T_{t,m}^\mathfrak K (f)(u)-T_{t,m}^\mathfrak K (f)(z)|(|T_{t,m}^\mathfrak K (f)(u)|+|T_{t,m}^\mathfrak K (f)(z)|)\frac{dt}{t}.
\end{align*}
Let us see that for each $x\in B$ and $t>4r_0^2$, $|T_{t,m}^\mathfrak K (f)(x)|\leq C\|f\|_{{\rm BMO}(\mathbb R^d,\omega _\mathfrak K)}$. Consider $x\in B$ and $t>4r_0^2$ and choose $k_0\in \mathbb N$ ($k_0\geq 3$) such that $2^{k_0-1}r_0< \sqrt{t}\leq 2^{k_0}r_0$.
By using that $\int_{\mathbb R^d}s^m\partial _s^mT_s^\mathfrak K(x,y)d\omega _\mathfrak K(y)=0$, $s>0$, and the estimates \eqref{1.5} and \eqref{c1} we get
\begin{align*}
    |T_{t,m}^\mathfrak K (f)(x)|&=|T_{t,m}^\mathfrak K(f-f_{\theta (2^{k_0}B)})|\leq C\left(\int_{\theta (2^{k_0}B)}\frac{|f(y)-f_{\theta(2^{k_0}B)}|}{\omega_\mathfrak K(B(x,\sqrt{t}))}d\omega_\mathfrak K(y)\right.\\
    &\quad +\left.\sum_{k=k_0}^\infty\int_{\theta (2^{k+1}B) \setminus \theta (2^kB)}\frac{e^{-c\frac{\rho(x,y)^2}{t}}}{\omega_\mathfrak K(B(x,\rho (x,y)))}|f(y)-f_{\theta (2^{k_0}B) }|d\omega _\mathfrak K(y)\right)
    \end{align*}
    Since $\sqrt{t}\sim 2^{k_0}r_0$ and $x\in B$, from \eqref{1.1} and  \eqref{1.2} we get
    $$
    \int_{\theta (2^{k_0}B)}\!\!\!\frac{|f(y)-f_{\theta(2^{k_0}B)}|}{\omega_\mathfrak K(B(x,\sqrt{t}))}d\omega_\mathfrak K(y)\leq \frac{C}{\omega_\mathfrak K(\theta (2^{k_0}B))}\int_{\theta (2^{k_0}B)}\!\!\!|f(y)-f_{\theta(2^{k_0}B)}|d\omega_\mathfrak K(y)\leq C\|f\|_{{\rm BMO}^\rho(\mathbb R^d,\omega_\mathfrak K)}.
    $$
On the other hand, when $y\in \theta (2^{k+1}B) \setminus \theta (2^kB)$, with $k\geq k_0$, we have that $\rho (x,y)^2/t\sim 2^{2(k-k_0)}$ and $\omega_\mathfrak K(B(x,\rho (x,y)))\sim \omega_\mathfrak K(B(x,2^kr_0))\sim\omega_\mathfrak K(2^kB)\sim\omega_\mathfrak K(\theta (2^kB))$. Thus, arguing as in \eqref{sumBMO} we obtain
    \begin{align*}
   \sum_{k=k_0}^\infty \int_{\theta (2^{k+1}B) \setminus \theta (2^kB)}\frac{e^{-c\frac{\rho(x,y)^2}{t}}}{\omega_\mathfrak K(B(x,\rho (x,y)))}|f(y)-f_{\theta (2^{k_0}B) }|d\omega _\mathfrak K(y)& \\
    &\hspace{-6.5cm}\leq C \sum_{k=k_0}^\infty \frac{e^{-c2^{2(k-k_0)}}}{\omega_\mathfrak K(\theta (2^kB))}\int_{\theta (2^{k+1}B)}|f(y)-f_{\theta (2^{k_0}B)}|d\omega _\mathfrak K(y)\leq C\|f\|_{{\rm BMO}^\rho(\mathbb R^d,\omega_\mathfrak K)}.
\end{align*}
By the above estimates and by proceeding as in \eqref{Tf2} and \eqref{Tf1} it follows that
\begin{align*}
|(g_{m,\infty}(f)(u))^2-(g_{m,\infty }(f)(z))^2|&\leq C\|f\|_{{\rm BMO}^\rho (\mathbb R^d,\omega_\mathfrak K)}\int_{4r_0^2}^\infty |T_{t,m}^\mathfrak K (f)(u)-T_{t,m}^\mathfrak K (f)(z)|\frac{dt}{t}\\
&\leq C\|f\|_{{\rm BMO}^\rho (\mathbb R^d,\omega_\mathfrak K)}^2.
\end{align*}
By combining \eqref{e1}, \eqref{e2}, \eqref{e3} and \eqref{e4} we get \eqref{g3}. Thus, the proof is finished.
\end{proof}

\section{Proof of Theorem \ref{Th1.1}}
Since $\{T_t^\mathfrak K\}_{t>0}$ is a diffusion semigroup in $L^p(\mathbb R^d,\omega_\mathfrak{K})$, $1\leq p<\infty$, according to \cite[Corollary 4.2] {LeMX2}, the operator $T_{*,k}^\mathfrak K$ is bounded from $L^p(\mathbb R^d,\omega_\mathfrak{K})$ into itself, for every $1< p<\infty$.

Let $f\in L^1(\mathbb R^d,\omega_\mathfrak{K})$. According to \eqref{1.1}, \eqref{1.5} and \eqref{c1} we get, for $x\in\mathbb R^d$ and $t>0$,
\begin{align*}
|T_{t,m}^\mathfrak K (f)(x)|& \leq C\left(\int_{\theta (B(x,\sqrt{t}))}\frac{|f(y)|}{\omega_\mathfrak{K}(B(x,\sqrt{t}))}d\omega_\mathfrak{K}(y)\right. \\
& \quad +\left. \sum_{k=0}^\infty\int_{\theta(B(x,2^{k+1}\sqrt{t}))\setminus \theta(B(x,2^{k}\sqrt{t}))}\frac{e^{-c\frac{\rho(x,y)^2}{t}}}{\omega_\mathfrak{K}(B(x,\rho (x,y)))}|f(y)|d\omega_\mathfrak{K}(y)\right) \\
& \leq C \sum_{k=-1}^\infty \frac{e^{-c2^{2k}}}{\omega_\mathfrak{K}(\theta (B(x,2^{k+1}\sqrt{t})))}\int_{\theta(B(x,2^{k+1}\sqrt{t}))}|f(y)|d\omega_\mathfrak{K}(y).
\end{align*}
Then, $T_{*,m}^\mathfrak K (f)\leq C{\mathcal M}_\rho f$, where
$$
\mathcal M_\rho f(x)=\sup_{r>0}\frac{1}{\omega_\mathfrak K(\theta (B (x,r)))}\int_{\theta (B (x,r))}|f(y)|d\omega_\mathfrak K(y),\quad x\in \mathbb R^d.
$$
As in \cite[p. 2380]{ADH} and considering that $\theta(B(x,r))=\cup_{g\in G}B(g(x),r)$, $x\in\mathbb R^d$ and $r>0$, we get
$$
T_{*,m}^\mathfrak K(f)(x)\leq C\sum_{g\in G}{\mathcal M}_{HL}f(g(x)),\quad x\in\mathbb R^d,
$$
where $\mathcal M_{HL}$ denotes the Hardy-Littlewood maximal function on the space of homogeneous type $(\mathbb R^d, |\cdot|,\omega_\mathfrak K)$.
We conclude that $T_{*,m}^\mathfrak K$ is bounded from $L^1(\mathbb R^d,\omega_\mathfrak{K})$ into $L^{1,\infty}(\mathbb R^d,\omega_\mathfrak{K})$.

Let us show now  the behaviour of $T_{*,m}^\mathfrak K$ on $H^1(\Delta _\mathfrak K)$. By taking into account Theorem \ref{Th1.3} and that $T_{*,m}^\mathfrak K (f)\leq V_\sigma(\{T_{t,m}^\mathfrak K\}_{t>0})(f)+T_{t,m}^\mathfrak K (f)_{|t=1}$, it is sufficient to prove that $\partial _t^mT_t^\mathfrak K\,_{|t=1}$ is bounded from $H^1(\Delta _\mathfrak K)$ into $L^1(\mathbb R^d,\omega_\mathfrak{K})$ to establish that $T_{*,m}^\mathfrak K$ also maps $H^1(\Delta _\mathfrak K)$ into $L^1(\mathbb R^d,\omega_\mathfrak{K})$.

We write $Sf=\partial_t^mT_t^\mathfrak K (f)_{|t=1}$. Since $S$ is bounded from $L^1(\mathbb R^d,\omega_\mathfrak{K})$ into $L^{1,\infty}(\mathbb R^d,\omega_\mathfrak{K})$, in order to prove that $S$ defines a bounded operator from $H^1(\Delta _\mathfrak K)$ into $L^1(\mathbb R^d,\omega_\mathfrak{K})$ we are going to see that there exists $C>0$ such that
$$
\|Sa\|_{L^1(\mathbb R^d, \omega_\mathfrak K)} \leq C,
$$
for every $(1,2,\Delta_\mathfrak K ,1)$-atom $a$.

Let $a$ be a $(1,2,\Delta_\mathfrak K ,1)$-atom. There exist $b\in D(\Delta_\mathfrak K )$ and an Euclidean ball $B=B(x_0,r_0)$ with $x_0\in\mathbb R^d$ and $r_0>0$ such that

\begin{enumerate}
\item[({\it i})] $a=\Delta_\mathfrak K  b;$
\item[({\it ii})] $\mbox{supp} (\Delta_\mathfrak K ^\ell b)\subset \theta (B)$, $\ell=0,1$;
\item[({\it iii})] $\|(r^2_0\Delta_\mathfrak K  )^\ell b\|_{L^2(\mathbb R^d,\omega_\mathfrak K)}\leq r_0^2\omega_\mathfrak{K}(B)^{-1/2}$, $\ell=0,1$.
\end{enumerate}
We write
$$
\|Sa\|_{L^1(\mathbb R^d, \omega_\mathfrak K)} =\Big(\int_{\theta(4B)}+\int_{(\theta(4B))^{\rm c}}\Big)|Sa(x)|d\omega_\mathfrak{K}(x)=J_1+J_2.
$$
Since $S$ is bounded on $L^2(\mathbb R^d,\omega_\mathfrak{K})$, by using ({\it iii}) and \eqref{1.2} we get
$$
J_1\leq \|Sa\|_{L^2(\mathbb R^d,\omega_\mathfrak K)}\omega_\mathfrak K(\theta (4B))^{1/2}\leq C\|a\|_{L^2(\mathbb R^d,\omega_\mathfrak K)}\omega_\mathfrak{K}(B)^{1/2}\leq C.
$$
On the other hand, since $|Sa|=|\Delta _\mathfrak K^{m+1}T_t^\mathfrak K(b)\,_{|t=1}|$, according to \cite[Lemmas 2.3 and 2.6]{Li} and proceeding as in \eqref{cLi} we obtain 
\begin{align*}
  J_2&= \int_{\theta(B)}|b(y)|\int_{(\theta(4B))^{\rm c}}|\Delta_\mathfrak K^{m+1}T_t^\mathfrak K (x,y)|_{|t=1}d\omega_\mathfrak{K}(x) d\omega_\mathfrak{K}(y)\\
  &\leq Ce^{-c r_0^2}\int_{\theta(B)}|b(y)|d\omega_\mathfrak{K}(y)\leq Ce^{-c r_0^2}\|b\|_{L^2(\mathbb R^d,\omega_\mathfrak K)}\omega_\mathfrak{K}(B)^{1/2}\leq Cr_0^2e^{-c r_0^2}\leq C.
\end{align*} 

We conclude that $\|Sa\|_{L^1(\mathbb R^d, \omega_\mathfrak K)} \leq C$, where $C>0$ does not depend on $a$.

Finally, suppose that $f\in {\rm BMO}^\rho (\mathbb R^d,\omega_\mathfrak{K})$ such that $T_{*,m}^\mathfrak K (f)(x)<\infty$, for almost all $x\in\mathbb R^d$. We are going to see that $T_{*,m}^\mathfrak K (f)\in {\rm BLO}(\mathbb R^d,\omega_\mathfrak{K})$.

Since $T_{*,m}^\mathfrak K (f)(x)<\infty$, for almost all $x\in\mathbb R^d$, ${\rm essinf}_{x\in B}T_{*,m}^\mathfrak K (f)(x)\in [0,\infty)$, for every Euclidean ball $B$ in $\mathbb R^d$.

Let $B=B(x_0,r_0)$ be an Euclidean ball. We consider the operators
$$
T_{*,m}^{\mathfrak K,0}(f)=\sup_{0<t\leq 4r_0^2}|T_{t,m}^\mathfrak K (f)|\quad \mbox{ and }\quad T_{*,m}^{\mathfrak K,\infty}(f)=\sup_{t> 4r_0^2}|T_{t,m}^\mathfrak K (f)|,
$$
and define the sets $B_0$ and $B_\infty$ as follows 
$$
B_0=\big\{x\in B:T_{*,m}^{\mathfrak K,0}(f)(x)\geq T_{*,m}^{\mathfrak K,\infty}(f)(x)\big\}\quad \mbox{ and }\quad B_\infty=\big\{x\in B:T_{*,m}^{\mathfrak K,0}(f)(x)< T_{*,m}^{\mathfrak K,\infty}(f)(x)\big\}.
$$
and write $f=(f-f_{\theta (4B)})\chi_{\theta (4B)}+(f-f_{\theta (4B)})\chi_{(\theta (4B))^{\rm c}}+f_{\theta (4B)}=f_1+f_2+f_3$.

We have that
\begin{align*}
\int_B [T_{*,m}^\mathfrak K (f)(x)-\ef_{z\in B}T_{*,m}^\mathfrak K (f)(z)]d\omega_\mathfrak{K}(x)&\\
&\hspace{-5cm}\leq \int_{B_0} (T_{*,m}^{\mathfrak K,0}(f)(x)-\ef_{z\in B}T_{4r_0^2,m}^\mathfrak K (f)(z))d\omega_\mathfrak{K}(x)+\int_{B_\infty} (T_{*,m}^{\mathfrak K,\infty} (f)(x)-\ef_{z\in B}T_{*,m}^{\mathfrak K,\infty} (f)(z))d\omega_\mathfrak{K}(x)\\
&\hspace{-5cm} \leq C\left(\int_B (T_{*,m}^{\mathfrak K,0}(f_1)(x)d\omega_\mathfrak{K}(x)+ \int_B (T_{*,m}^{\mathfrak K,0}(f_2)(x)d\omega_\mathfrak{K}(x) \right.  \\
&\hspace{-4cm}\quad + \omega_\mathfrak{K}(B) \sup_{x,z\in B}\sup_{0<t\leq 4r_0^2}|T_{t,m}^\mathfrak K(f_3)(x)-T_{4r_0^2,m}^\mathfrak K (f)(z)|\\
&\hspace{-4cm}\quad + \left. \omega_\mathfrak{K}(B) \sup_{x,z\in B}\sup_{t> 4r_0^2}|T_{t,m}^\mathfrak K (f)(x)-T_{t,m}^\mathfrak K (f)(z)|\right)=\sum_{j=1}^4 I_j.
\end{align*}

Since $T_{*,m}^\mathfrak K$ is bounded on $L^2(\mathbb R^d,\omega_\mathfrak{K})$ it follows that
$$
 I_1  \leq \omega_\mathfrak{K}(B)^{1/2}\|T_{*,m}^{\mathfrak K,0}(f_1)\|_{L^2(\mathbb R^d,\omega_\mathfrak{K})}\leq C \omega_\mathfrak{K}(B)^{1/2}\|f_1\|_{L^2(\mathbb R^d,\omega_\mathfrak K)}  \leq C\omega_\mathfrak{K}(B)\|f\|_{{\rm BMO}^\rho(\mathbb R^d,\omega_\mathfrak{K})}.
$$
By using \eqref{1.1}, \eqref{1.2} and \eqref{c1} and proceeding as in \eqref{f2Jr} we get, for every $x\in B$ and $0<t\leq 4r_0^2$, 
\begin{align}\label{tmf2}
    |T_{t,m}^\mathfrak K(f_2)(x)| & \leq C\int_{(\theta(4B))^{\rm c}}|f_2(y)|\frac{e^{-c\frac{\rho(x,y)^2}{t}}}{\omega_\mathfrak{K}(B(x,\rho(x,y)))}d\omega_\mathfrak{K}(y) \nonumber\\
    & \leq C\sqrt{t}\int_{(\theta(4B))^{\rm c}}\frac{|f_2(y)|}{\rho(x,y)\omega_\mathfrak{K}(B(x,\rho(x,y)))}d\omega_\mathfrak{K}(y)\leq C\|f\|_{{\rm BMO}^\rho (\mathbb R^d,\omega_\mathfrak{K})}.
\end{align}
Then, $I_2\leq C\omega_\mathfrak K(B)\|f\|_{{\rm BMO}^\rho (\mathbb R^d,\omega_\mathfrak{K})}$.

Now observe that, by virtue of \eqref{1.4}, for every $x,z\in B$ and $0<t\leq 4r_0^2$,
$$
T_{4r_0^2,m}^{\mathfrak K} (f)(z)-T_{t,m}^\mathfrak K(f_3)(x)=T_{4r_0^2,m}^{\mathfrak K}(f-f_3)(z)=T_{4r_0^2,m}^{\mathfrak K}(f_1)(z)+T_{4r_0^2,m}^{\mathfrak K}(f_2)(z).
$$
From (\ref{1.1}), \eqref{1.2} and \eqref{1.5} it follows that
\begin{align*}
 |T_{4r_0^2,m}^{\mathfrak K} (f_1)(z)|  &\leq C \int_{\theta(4B)}\frac{|f_1(y)|}{\omega_\mathfrak{K}(B(z,2r_0))}d\omega_\mathfrak{K}(y) \leq \frac{C}{\omega_\mathfrak{K}(\theta (4B))} \int_{\theta (4B)}|f(y)-f_{\theta (4B)}|d\omega_\mathfrak{K}(y)\\
    & \leq C\|f\|_{{\rm BMO}^\rho (\mathbb R^d,\omega_\mathfrak{K})},\quad z\in B,
\end{align*}
which, jointly \eqref{tmf2}, leads to $I_3\leq C\omega_\mathfrak K (B)\|f\|_{{\rm BMO}^\rho (\mathbb R^d,\omega_\mathfrak K)}$.

Finally, we estimate $I_4$. Consider $x,z\in B$ and $t>4r_0^2$. According again to \eqref{1.4} we can write 
$$
|T_{t,m}^\mathfrak K (f)(x)-T_{t,m}^\mathfrak K (f)(z)| \leq |T_{t,m}^\mathfrak K(f_1)(x)|+|T_{t,m}^\mathfrak K(f_1)(z)| + |T_{t,m}^\mathfrak K(f_2)(x)-T_{t,m}^\mathfrak K(f_2)(z)|.
$$
Estimates \eqref{1.1}, \eqref{1.2} and \eqref{1.5} imply that 
\begin{align*}
|T_{t,m}^\mathfrak K(f_1)(x)|&\leq C\int_{\theta(4B)}\frac{|f_1(y)|}{\omega_\mathfrak{K}(B(x,\sqrt{t}))}d\omega_\mathfrak{K}(y) \leq C \int_{\theta (4B)}\frac{|f_1(y)|}{\omega_\mathfrak{K}(B(x,2r_0))}d\omega_\mathfrak{K}(y)\\
&\leq C\|f\|_{{\rm BMO}^\rho (\mathbb R^d,\omega_\mathfrak{K})},
\end{align*}
and in the same way $|T_{t,m}^\mathfrak K(f_1)(z)|\leq C\|f\|_{{\rm BMO}^\rho (\mathbb R^d,\omega_\mathfrak K)}$.

On the other hand, by taking into account \eqref{c2} and arguing as in \eqref{f2Jr}, we get
\begin{align*}
    |T_{t,m}^\mathfrak K(f_2)(x)-T_{t,m}^\mathfrak K(f_2)(z)|&\leq C\frac{|x-z|}{\sqrt{t}}\int_{(\theta (4B))^{\rm c}}|f_2(y)|\frac{e^{-c\frac{\rho(x,y)^2}{t}}}{\omega_\mathfrak{K}(B(x,\rho (x,y)))}d\omega_\mathfrak{K}(y)\\
    &\leq Cr_0\int_{(\theta (4B))^{\rm c}}\frac{|f_2(y)|}{\rho (x,y)\omega_\mathfrak{K}(B(x,\rho (x,y)))}d\omega_\mathfrak{K}(y)\\
    &\leq  C\|f\|_{{\rm BMO}^\rho(\mathbb R^d,\omega_\mathfrak{K})}.
\end{align*}
Then, $I_4\leq C\omega_\mathfrak K(B)\|f\|_{{\rm BMO}^\rho (\mathbb R^d,\omega_\mathfrak{K})}$.

By combining the above estimates we conclude that
$$
\frac{1}{\omega_\mathfrak{K}(B)}\int_B (T_{*,m}^\mathfrak K (f)(x)-\ef_{z\in B}T_{*,m}^\mathfrak K (f)(z))d\omega_\mathfrak{K}(x)\leq C\|f\|_{{\rm BMO}^\rho(\mathbb R^d,\omega_\mathfrak{K})},
$$
where $C>0$ does not depend on $B$. Thus, $\|T_{*,m}^\mathfrak K(f)\|_{{\rm BLO}(\mathbb R^d,\omega_\mathfrak K)}\leq C\|f\|_{{\rm BMO}^\rho (\mathbb R^d,\omega _\mathfrak K)}$.


\begin{thebibliography}{10}

\bibitem{Ai}
{\sc Aimar, H.}
\newblock Singular integrals and approximate identities on spaces of
  homogeneous type.
\newblock {\em Trans. Amer. Math. Soc. 292}, 1 (1985), 135--153.

\bibitem{AJS}
{\sc Akcoglu, M.~A., Jones, R.~L., and Schwartz, P.~O.}
\newblock Variation in probability, ergodic theory and analysis.
\newblock {\em Illinois J. Math. 42}, 1 (1998), 154--177.

\bibitem{AS}
{\sc Amri, B., and Sifi, M.}
\newblock Riesz transforms for {D}unkl transform.
\newblock {\em Ann. Math. Blaise Pascal 19}, 1 (2012), 247--262.

\bibitem{ABDH}
{\sc Anker, J.-P., Ben~Salem, N., Dziuba\'{n}ski, J., and Hamda, N.}
\newblock The {H}ardy space {$H^1$} in the rational {D}unkl setting.
\newblock {\em Constr. Approx. 42}, 1 (2015), 93--128.

\bibitem{ADH}
{\sc Anker, J.-P., Dziuba\'{n}ski, J., and Hejna, A.}
\newblock Harmonic functions, conjugate harmonic functions and the {H}ardy
  space {$H^1$} in the rational {D}unkl setting.
\newblock {\em J. Fourier Anal. Appl. 25}, 5 (2019), 2356--2418.

\bibitem{BORSS}
{\sc Beltran, D., Oberlin, R., Roncal, L., Seeger, A., and Stovall, B.}
\newblock Variation bounds for spherical averages.
\newblock {\em Math. Ann. 382}, 1-2 (2022), 459--512.

\bibitem{Ben}
{\sc Bennett, C.}
\newblock Another characterization of {BLO}.
\newblock {\em Proc. Amer. Math. Soc. 85}, 4 (1982), 552--556.

\bibitem{BdL}
{\sc Betancor, J., and de~Le\'on-Contreras, M.}
\newblock Variation inequalities for {R}iesz transforms and {P}oisson
  semigroups associated with {L}aguerre polynomial expansions.
\newblock {\em Ann. Appl.\/} (2023).

\bibitem{BK} 
{\sc Blunck, S., and Kunstmann, P.C.}
\newblock Calder\'on-Zygmund theory for non-integral operators and the $H^\infty$ functional calculus.
\newblock{\em Rev. Mat. Iberoam. 19} (2003), 919--942.

\bibitem{Bou1}
{\sc Bourgain, J.}
\newblock Pointwise ergodic theorems for arithmetic sets.
\newblock {\em Inst. Hautes \'{E}tudes Sci. Publ. Math.}, 69 (1989), 5--45.
\newblock With an appendix by the author, Harry Furstenberg, Yitzhak Katznelson
  and Donald S. Ornstein.

\bibitem{CJRW1}
{\sc Campbell, J.~T., Jones, R.~L., Reinhold, K., and Wierdl, M.}
\newblock Oscillation and variation for the {H}ilbert transform.
\newblock {\em Duke Math. J. 105}, 1 (2000), 59--83.

\bibitem{CJRW2}
{\sc Campbell, J.~T., Jones, R.~L., Reinhold, K., and Wierdl, M.}
\newblock Oscillation and variation for singular integrals in higher
  dimensions.
\newblock {\em Trans. Amer. Math. Soc. 355}, 5 (2003), 2115--2137.

\bibitem{CR}
{\sc Coifman, R.~R., and Rochberg, R.}
\newblock Another characterization of {BMO}.
\newblock {\em Proc. Amer. Math. Soc. 79}, 2 (1980), 249--254.

\bibitem{CoWe}
{\sc Coifman, R.~R., and Weiss, G.}
\newblock {\em Analyse harmonique non-commutative sur certains espaces
  homog\`enes}.
\newblock Lecture Notes in Mathematics, Vol. 242. Springer-Verlag, Berlin-New
  York, 1971.
\newblock \'{E}tude de certaines int\'{e}grales singuli\`eres.

\bibitem{CoWe1}
{\sc Coifman, R.~R., and Weiss, G.}
\newblock Extensions of {H}ardy spaces and their use in analysis.
\newblock {\em Bull. Amer. Math. Soc. 83}, 4 (1977), 569--645.

\bibitem{DaXu}
{\sc Dai, F., and Xu, Y.}
\newblock {\em Analysis on {$h$}-harmonics and {D}unkl transforms}.
\newblock Advanced Courses in Mathematics. CRM Barcelona.
  Birkh\"{a}user/Springer, Basel, 2015.
\newblock Edited by Sergey Tikhonov.

\bibitem{Dun2}
{\sc Dunkl, C.~F.}
\newblock Reflection groups and orthogonal polynomials on the sphere.
\newblock {\em Math. Z. 197}, 1 (1988), 33--60.

\bibitem{Dun1}
{\sc Dunkl, C.~F.}
\newblock Differential-difference operators associated to reflection groups.
\newblock {\em Trans. Amer. Math. Soc. 311}, 1 (1989), 167--183.

\bibitem{Dun3}
{\sc Dunkl, C.~F.}
\newblock Integral kernels with reflection group invariance.
\newblock {\em Canad. J. Math. 43}, 6 (1991), 1213--1227.

\bibitem{Dun4}
{\sc Dunkl, C.~F.}
\newblock Hankel transforms associated to finite reflection groups.
\newblock In {\em Hypergeometric functions on domains of positivity, {J}ack
  polynomials, and applications ({T}ampa, {FL}, 1991)}, vol.~138 of {\em
  Contemp. Math.} Amer. Math. Soc., Providence, RI, 1992, pp.~123--138.

\bibitem{DH2}
{\sc Dziuba\'{n}ski, J., and Hejna, A.}
\newblock H\"{o}rmander's multiplier theorem for the {D}unkl transform.
\newblock {\em J. Funct. Anal. 277}, 7 (2019), 2133--2159.

\bibitem{DH1}
{\sc Dziuba\'{n}ski, J., and Hejna, A.}
\newblock Remark on atomic decompositions for the {H}ardy space {$H^1$} in the
  rational {D}unkl setting.
\newblock {\em Studia Math. 251}, 1 (2020), 89--110.

\bibitem{DH4}
{\sc Dziuba\'{n}ski, J., and Hejna, A.}
\newblock On semigroups generated by sums of even powers of {D}unkl operators.
\newblock {\em Integral Equations Operator Theory 93}, 31 (2021). https://doi.org/10.1007/s00020-021-02646-4.

\bibitem{DH3}
{\sc Dziuba\'{n}ski, J., and Hejna, A.}
\newblock Singular integrals in the rational {D}unkl setting.
\newblock {\em Rev. Mat. Complut. 35}, 3 (2022), 711--737.

\bibitem{DH5}
{\sc Dziuba\'{n}ski, J., and Hejna, A.}
\newblock Upper and lower bounds for {L}ittlewood-{P}aley square functions in
  the {D}unkl setting.
\newblock {\em Studia Math. 262}, 3 (2022), 275--303.

\bibitem{DH6}
{\sc Dziuba\'{n}ski, J., and Hejna, A.}
\newblock Upper and lower bounds for the {D}unkl heat kernel.
\newblock {\em Calc. Var. Partial Differential Equations 62}, 25 (2023), https://doi.org/10.1007/s00526-022-02370-w.

\bibitem{DH7}
{\sc Dziuba\'nski, J., and Hejna, A.}
\newblock On {D}unkl {S}chr\"odinger semigroups with {G}reen bounded potentials.
\newblock Preprint 2022, 	arXiv:2204.03443r
.

\bibitem{DH8}
{\sc Dziuba\'nski, J., and Hejna, A.}
\newblock Remarks on {D}unkl translations of non-radial kernels.
\newblock {\em J. Fourier Anal. Appl. 29}, 52 (2023), https://doi.org/10.1007/s00041-023-10034-2.

\bibitem{AmHa}
{\sc Hammi, A., and Amri, B.}
\newblock Dunkl-{S}chr\"odinger operators.
\newblock{\em Complex Anal. Oper. Theory 13}, (2019), 1033--1058.

\bibitem{HLLW}
{\sc Han, Y., Lee, M.-Y., Li, J., and Wick, B.~D.}
\newblock Riesz transform and commutators in the {D}unkl setting.
\newblock Preprint 2021, 	arXiv:2105.11275.

\bibitem{HMMT}
{\sc Harboure, E., Mac\'{\i}as, R.~A., Men\'{a}rguez, M.~T., and Torrea, J.~L.}
\newblock Oscillation and variation for the {G}aussian {R}iesz transforms and
  {P}oisson integral.
\newblock {\em Proc. Roy. Soc. Edinburgh Sect. A 135}, 1 (2005), 85--104.

\bibitem{H1}
{\sc Hejna, A.}
\newblock Hardy spaces for the {D}unkl harmonic oscillator.
\newblock {\em Math. Nachr. 293}, 11 (2020), 2112--2139.

\bibitem{Ji}
{\sc Jiang, Y.}
\newblock Spaces of type {BLO} for non-doubling measures.
\newblock {\em Proc. Amer. Math. Soc. 133}, 7 (2005), 2101--2107.

\bibitem{JiLi1}
{\sc Jiu, J., and Li, Z.}
\newblock On the representing measures of {D}unkl's intertwining operator.
\newblock {\em J. Approx. Theory 269} (2021), Paper No. 105605, 10.

\bibitem{JiLi3}
{\sc Jiu, J., and Li, Z.}
\newblock The dual of the {H}ardy space associated with the {D}unkl operators.
\newblock {\em Adv. Math. 412} (2023), https://doi.org/10.1016/j.aim.2022.108810.

\bibitem{JiLi2}
{\sc Jiu, J., and Li, Z.}
\newblock Local boundary behaviour and the area integral of generalized
  harmonic functions associated with root systems.
\newblock Preprint 2022, 	arXiv:2206.02132.

\bibitem{JKRW}
{\sc Jones, R.~L., Kaufman, R., Rosenblatt, J.~M., and Wierdl, M.}
\newblock Oscillation in ergodic theory.
\newblock {\em Ergodic Theory Dynam. Systems 18}, 4 (1998), 889--935.

\bibitem{JoRe}
{\sc Jones, R.~L., and Reinhold, K.}
\newblock Oscillation and variation inequalities for convolution powers.
\newblock {\em Ergodic Theory Dynam. Systems 21}, 6 (2001), 1809--1829.

\bibitem{JSW}
{\sc Jones, R.~L., Seeger, A., and Wright, J.}
\newblock Strong variational and jump inequalities in harmonic analysis.
\newblock {\em Trans. Amer. Math. Soc. 360}, 12 (2008), 6711--6742.

\bibitem{JW}
{\sc Jones, R.~L., and Wang, G.}
\newblock Variation inequalities for the {F}ej\'{e}r and {P}oisson kernels.
\newblock {\em Trans. Amer. Math. Soc. 356}, 11 (2004), 4493--4518.

\bibitem{LeMX2}
{\sc Le~Merdy, C., and Xu, Q.}
\newblock Maximal theorems and square functions for analytic operators on
  {$L^p$}-spaces.
\newblock {\em J. Lond. Math. Soc. (2) 86}, 2 (2012), 343--365.

\bibitem{LeMXu1}
{\sc Le~Merdy, C., and Xu, Q.}
\newblock Strong {$q$}-variation inequalities for analytic semigroups.
\newblock {\em Ann. Inst. Fourier (Grenoble) 62}, 6 (2012), 2069--2097 (2013).

\bibitem{Le}
{\sc L\'{e}pingle, D.}
\newblock La variation d'ordre {$p$} des semi-martingales.
\newblock {\em Z. Wahrscheinlichkeitstheorie und Verw. Gebiete 36}, 4 (1976),
  295--316.

\bibitem{Li}
{\sc Li, H.}
\newblock Weak type estimates for square functions of {D}unkl heat flows.
\newblock Preprint 2021, 	arXiv:2101.04056.

\bibitem{LZ}
{\sc Li, Z., and Zhang, X.}
\newblock On {S}chr\"{o}dinger oscillatory integrals associated with the
  {D}unkl transform.
\newblock {\em J. Fourier Anal. Appl. 25}, 2 (2019), 267--298.

\bibitem{MTX}
{\sc Ma, T., Torrea, J.~L., and Xu, Q.}
\newblock Weighted variation inequalities for differential operators and
  singular integrals in higher dimensions.
\newblock {\em Sci. China Math. 60}, 8 (2017), 1419--1442.

\bibitem{MY}
{\sc Meng, Y., and Yang, D.}
\newblock Estimates for {L}ittlewood-{P}aley operators in {${\rm BMO}(\Bbb
  R^n)$}.
\newblock {\em J. Math. Anal. Appl. 346}, 1 (2008), 30--38.

\bibitem{PX}
{\sc Pisier, G., and Xu, Q.~H.}
\newblock The strong {$p$}-variation of martingales and orthogonal series.
\newblock {\em Probab. Theory Related Fields 77}, 4 (1988), 497--514.

\bibitem{Q}
{\sc Qian, J.}
\newblock The {$p$}-variation of partial sum processes and the empirical
  process.
\newblock {\em Ann. Probab. 26}, 3 (1998), 1370--1383.

\bibitem{Ro1}
{\sc R\"{o}sler, M.}
\newblock Generalized {H}ermite polynomials and the heat equation for {D}unkl
  operators.
\newblock {\em Comm. Math. Phys. 192}, 3 (1998), 519--542.

\bibitem{Ro2}
{\sc R\"{o}sler, M.}
\newblock Positivity of {D}unkl's intertwining operator.
\newblock {\em Duke Math. J. 98}, 3 (1999), 445--463.

\bibitem{Ro4}
{\sc R\"{o}sler, M.}
\newblock Dunkl operators: theory and applications.
\newblock In {\em Orthogonal polynomials and special functions ({L}euven,
  2002)}, vol.~1817 of {\em Lecture Notes in Math.} Springer, Berlin, 2003,
  pp.~93--135.

\bibitem{Ro3}
{\sc R\"{o}sler, M.}
\newblock A positive radial product formula for the {D}unkl kernel.
\newblock {\em Trans. Amer. Math. Soc. 355}, 6 (2003), 2413--2438.

\bibitem{JeuRo}
{\sc R\"{o}sler, M., and de~Jeu, M.}
\newblock Asymptotic analysis for the {D}unkl kernel.
\newblock {\em J. Approx. Theory 119}, 1 (2002), 110--126.

\bibitem{RoVo}
{\sc R\"{o}sler, M., and Voit, M.}
\newblock Dunkl theory, convolution algebras, and related {M}arkov processes.
\newblock In {\em Harmonic and stochastic analysis of Dunkl processes (Paris,
  2008)}, vol.~71 of {\em Travaux en cours}. Hermann, 2008, pp.~1--112.

\bibitem{St}
{\sc Stein, E.~M.}
\newblock {\em Topics in harmonic analysis related to the {L}ittlewood-{P}aley
  theory}.
\newblock Annals of Mathematics Studies, No. 63. Princeton University Press,
  Princeton, N.J.; University of Tokyo Press, Tokyo, 1970.

\bibitem{THHLL}
{\sc Tan, C., Han, Y., Han, Y., Lee, M.-Y., and Li, J.}
\newblock Singular integral operators, ${T}1$ theorem, {L}ittlewood-{P}aley
  theory and {H}ardy spaces in {D}unkl setting.
\newblock Preprint 2022.

\bibitem{ThXu1}
{\sc Thangavelu, S., and Xu, Y.}
\newblock Convolution operator and maximal function for the {D}unkl transform.
\newblock {\em J. Anal. Math. 97\/} (2005), 25--55.

\bibitem{ThXu2}
{\sc Thangavelu, S., and Xu, Y.}
\newblock Riesz transform and {R}iesz potentials for {D}unkl transform.
\newblock {\em J. Comput. Appl. Math. 199}, 1 (2007), 181--195.

\bibitem{YYZ}
{\sc Yang, D., Yang, D., and Zhou, Y.}
\newblock Localized {M}orrey-{C}ampanato spaces on metric measure spaces and
  applications to {S}chr\"{o}dinger operators.
\newblock {\em Nagoya Math. J. 198\/} (2010), 77--119.

\end{thebibliography}
\end{document}